\newtheorem{thm}{Theorem}[section]
\newtheorem{cor}[thm]{Corollary}
\newtheorem{lem}[thm]{Lemma}
\newtheorem{prop}[thm]{Proposition}
\newtheorem{rem}[thm]{Remark}
\newtheorem{example}[thm]{Example}
\newtheorem{assumptions}[thm]{Assumptions}
\numberwithin{equation}{section}
\newcommand{\R}{\mathbb{R}}
\newcommand{\E}{\mathbb{E}}
\newcommand{\Ind}{{\rm{\textbf{1}}}}
\newcommand{\rev}[1]{{#1}}
\title[Systems smoothly interacting through hitting times]{Mean-field limit of a stochastic particle system smoothly interacting through threshold hitting-times and applications to neural networks with dendritic component.}
\author{J. Inglis and D. Talay}
\address{Equipe Tosca, INRIA Sophia-Antipolis M\'editerran\'ee,
2004 route des lucioles, BP 93,  06902 Sophia Antipolis Cedex,  France} 
\email{james.inglis@inria.fr, denis.talay@inria.fr}
\thanks{This work was partially supported by the European Union Seventh Framework Programme (FP7) under grant agreement no. 269921 (BrainScaleS), no. 318723 (Mathemacs) and the Human Brain Project (HBP)}
\begin{document}
\maketitle
\begin{abstract}
In this article we study the convergence of a stochastic particle system that interacts through threshold hitting times towards a novel equation of McKean-Vlasov type.  The particle system is motivated by an original model for the behavior of a network of neurons, in which a classical noisy integrate-and-fire model is coupled with a cable equation to describe the dendritic structure of each neuron. 

\vspace{5pt}
\noindent {\sc Keywords:} McKean-Vlasov equation; cable equation; noisy integrate-and-fire model; mean-field limit; non-constant synaptic weights; dendritic structure; neuroscience.
\end{abstract}

\section{Introduction}

Particle systems that interact through hitting/exit times are receiving an increasing amount of attention in diverse areas.  For some recent references and a discussion of computational models in fluid and population dynamics, see \cite[Sec.5]{TalayICM}.  Other examples can be found in financial mathematics: in \cite{Giesecke1} such a system is used to model portfolio default rates.  
In this article our motivation comes principally from neuroscience, though the new model we propose may have different applications (see Section \ref{conclusion} below).  The specific nature of the interactions in the model requires the development of some new analytical techniques in order to study the mean-field limit.

Recently, several works have been concerned with a particular model of neuronal activity that consists of a network of $N$ neurons each evolving according to the so-called noisy integrate-and-fire model, and interacting though an empirical average of jump terms.  To be more precise, the framework introduced in \cite{ostojic:brunel:hakim} modeled the electrical potential $U^i_t$  across the soma of neuron $i$ at time $t$ as a stochastic process $U^i = (U^i_t)_{t\geq0}$ with the following two key properties: 
\begin{itemize}
\item[(i)] whenever $U^i_t$ reaches a constant threshold, it is instantaneously reset to some value below the threshold (at such times the neuron instigates an action potential and is said to have `spiked');
\item[(ii)] at spike times all the other neurons instantaneously receive a `kick' of size proportional to $1/N$, which may in turn cause them to spike if they are close enough to the threshold.
\end{itemize}
The idea is then to approximate the interaction described by (ii) by an average when $N$ becomes large i.e. to take the mean-field limit.  The resulting non-standard equation of McKean-Vlasov type is the subject of \cite{DIRT} from a stochastic process point of view, and \cite{CCP, CGGS} from a PDE perspective. In particular, it was shown in these works that for certain choices of parameters the limit equation has a unique global-in-time solution, while for others it exhibits a blow-up phenomenon in finite time (where the effect of a single neuron spiking causes an instantaneous cascade during which a macroscopic proportion of other neurons all spike at exactly the same time).  This causes serious problems when trying to approximate the behavior of the finite system by the limit equation after a blow-up (though this problem has been partially resolved in \cite{DIRT2}).

A major criticism of the \rev{aforementioned} model is the instantaneous transmission of the effect of a spiking neuron to all points in the network.  This is because in reality it takes time for the arrival of an action potential at a synapse to be felt at the soma, as it is transmitted along the dendritic tree.  The first purpose of this article is thus to introduce a new model that takes into account this effect, by describing how the transmission occurs through the use of the cable equation.

It turns out that by doing this, we have a rare example whereby an attempt to derive a model that better reflects reality in fact helps with the analysis in some places, and allows us to make significant generalizations. 
The two key generalizations we make here in comparison with previous setups are that we allow for non-constant synaptic weights (so that the effect of neuron $i$ on neuron $j$ is not necessarily the same as the effect of $i$ on $k$), and that in place of a constant diffusion coefficient we work with a much more general (elliptic and bounded) one.  Both these generalizations are important from the modeling point of view, since non-constant synaptic weights and general noise are important features in neural networks.
 
The advantage of introducing the cable equation as a transmission mechanism is that the dynamics at the soma are smoothed.  As we will see, this eliminates the possibility of a blow-up cascade, which is an artifact of the instantaneous transmission of action potentials in previous models.   Nevertheless, the resulting McKean-Vlasov equation and associated nonlinear martingale problem is of a new type, and the analysis of its solution requires arguments which are not available in the literature.   

The main results (Theorem \ref{thm: existence and uniqueness} and Theorem \ref{thm: convergence}) of the current article are that in the limit as the size of the network becomes infinite, our new model can indeed be approximated by the unique solution to the associated McKean-Vlasov type limit equation.  The general proof strategy of these two results takes inspiration from that presented in \cite{DIRT} and  \cite{DIRT2}, though it has to be significantly adapted to the new situation.  Indeed we develop new arguments that both reduce the complexity of the proofs found in these previous works by taking advantage of the smoothing effect of the transmission mechanism, and allow us to handle the non-constant synaptic weights and general diffusion term.

The layout of the paper is as follows.  In Section \ref{sec: main results}, we present the particle system and nonlinear McKean-Vlasov equation under study, together with the main results.  In Section \ref{sec: derivation} we derive the model from biological considerations.  In Section \ref{sec: e+u} the first main result (Theorem \ref{thm: existence and uniqueness}) of the existence and uniqueness of a solution to the nonlinear McKean-Vlasov equation is proved.  In Section \ref{sec:convergence} the convergence of the particle system (Theorem \ref{thm: convergence}) is proved, and we finish with a conclusion in Section \ref{conclusion}.

\vspace{0.5cm}
\noindent\textit{General notation:} For a \rev{subset of Euclidean} space $\mathcal{T}$, we will denote by $\mathcal{C}(\mathcal{T})$ the space of all continuous functions $:\mathcal{T}\to\R$, equipped with the usual uniform norm $\|f\|_\infty := \sup_{t\in\mathcal{T}}|f(t)|$ for  $f\in\mathcal{C}(\mathcal{T})$. Where necessary we will write $\mathcal{C}_b(\mathcal{T})$ for the space of all bounded continuous functions on $\mathcal{T}$.
For $n\in\{1, \dots, \infty\}$, $\mathcal{C}^n(\mathcal{T})$ will denote the space of $n$ times continuously differentiable functions $:\mathcal{T}\to\R$, equipped with the norm $\|f\|_{\mathcal{C}^n(\mathcal{T})} := \sum_{i=0}^n \|f^{(i)}\|_\infty$ for  $f\in\mathcal{C}^n(\mathcal{T})$, 
where $f^{(i)}$ indicates the $i$th derivative of $f$.  Similarly, $\mathcal{C}^n_b(\mathcal{T})$ will denote the subspace of bounded functions in $\mathcal{C}^n(\mathcal{T})$.
For $f\in\mathcal{C}([0, \infty))$ we will sometimes write $\|f\|_{\infty, T} := \sup_{t\in[0, T]}|f(t)|$ for $T>0$.
Moreover, for $T>0$, we will use $\mathcal{D}([0, T])$ to denote the Skorohod space of c\`adl\`ag functions, equipped with the usual Skorohod topology.
We will also write $\mathcal{P}(\mathcal{T})$ for the space of probability measures on $(\mathcal{T}, \mathcal{B}(\mathcal{T}))$, where $\mathcal{B}(\mathcal{T})$ is the set of Borel sets of $\mathcal{T}$.  Finally, we use the notation $\lfloor x\rfloor$ to represent the integer part of $x\geq0$, and include a nomenclature table in the Appendix (Section \ref{nomenclature}) to help the reader keep track of the more specific definitions.

\section{Main results}
\label{sec: main results}
The central object of study in this article will be an $N$-particle system interacting through threshold hitting times.
The motivation for studying this particular particle system will be described in detail in the next section.  In short, the system models the behavior of the electrical potential $(U_t^i)_{t\geq0}$ across the soma of neuron $i$ in a neural network of size $N$. Each time this potential reaches the threshold~1 (at times $(\tau_k^i)_{k\geq1}$) the potential is instantaneously reset to $0$.  At such a time the neuron is said to `spike'.  The neurons in the network interact through a term that describes the effect of a spiking neuron on the rest of the network, with the coefficients $J_{ij}$ representing the connection strengths, and the kernel $G$ modeling the mechanism whereby a spike is transmitted from a synapse (where it is received) along the dendritic tree to the soma.

Precisely, the system is given by
\begin{equation}
\begin{cases}
U_t^i=U^i_0 + H(t)  + \int_0^t b(U_s^i)ds +\sum_{j=1}^N\frac{J_{ij}}{S^N_i}\int_0^t   {G}(t -s)M_s^j ds - M^i_t + \int_0^t\sigma(U_s^i)dW_s^i,\\
S^N_i := \sum_{j=1}^NJ_{ij}, \quad M_t^i := \sum_{k=1}^\infty \Ind_{[0,t]}(\tau_k^i),\\
\tau_k^i:= \inf\{t\geq\tau_{k-1}^i:  U_{t-}^i \geq 1\},\ k\in\mathbb{N}\backslash\{0\}, \quad \tau_0^i = 0,
\end{cases}
\label{particle system}
\end{equation}
for $t\geq0$ and $i\in\{1, \dots, N\}$.  Here $(W^i_t)_{t\geq0}$ are independent standard one-dimensional Brownian motions and the weights $J_{ij}$ are non-negative constants.  The integer-valued processes $(M_t^i)_{t\geq0}$ count the number of times the potential $U^i$ has reached the threshold before $t$.  From a biological point of view, they count the number of times the neuron has `spiked' before $t$.  

To understand the system \eqref{particle system}, note that in between system spiking times, all the potentials $(U^i)_{i\in\{1, \dots, N\}}$ simply follow an SDE \rev{whose coefficients do not depend on the behavior of the other neurons during this period, since over such a time interval all the $(M^i)_{i\in\{1, \dots, N\}}$ are constant. That is, given the value of $U^i$ at a spiking time, its evolution in time up until the next system spiking time will be independent of the behavior of the other potentials during this interval}.  Then, when the system spikes i.e. when one of the potentials $U^i$ reaches the threshold, the SDEs are all updated by setting $M^i_{t} = M^i_{t-} +1$. The process $(M^i_t)_{t\geq0}$ (and hence $(U^i_t)_{t\geq0}$) is c\`adl\`ag, so that $U^i_t<1$ almost surely. 

The central goal is the study of the convergence of the particle system \eqref{particle system} as $N\to\infty$ towards the limit equation
\begin{equation}
\begin{cases}
U_t &= U_0 + H(t)  + \int_0^t b(U_s)ds +\int_0^t   {G}(t -s)\E(M_s) ds - M_t + \int_0^t\sigma(U_s)dW_s,\\
M_t &:= \sum_{k=1}^\infty \Ind_{[0,t]}(\tau_k),\\
\tau_k &=: \inf\{t\geq\tau_{k-1}:  U_{t-}\geq 1\},\ k\in\mathbb{N}\backslash\{0\}, \quad \tau_0 = 0.
\end{cases}
\label{limit equation}
\end{equation}
This is a non-trivial equation of McKean-Vlasov type, since the right-hand side depends on the distribution of the solution.  However, it is non-standard since it in fact depends on the distribution of the \textit{hitting times} of the threshold by the solution, rather than the solution itself.

Throughout the article, we will impose the following conditions on the coefficients $H, b, \sigma, G$ and the initial condition $U_0$:
\begin{assumptions}
\label{assumptions}
Assume that
\begin{itemize}
\item $H:[0, \infty)\to\R$ is bounded and twice differentiable, with bounded derivatives;
\item $b\in\mathcal{C}^1(\R)$ \rev{with bounded derivative, and set $\Lambda_b:= \max\{\|b'\|_\infty, b(0)\}$, so that}
\[
|b'(x)|\leq \Lambda_b, \quad |b(x)|\leq \Lambda_b(1+|x|), \quad x\in\R;
\] 
\item $\sigma\in\mathcal{C}^2_b(\R)$ and there exists a constant $\Lambda_\sigma>0$ such that
\[
\Lambda_\sigma^{-1} \leq \sigma(x) \leq \Lambda_\sigma, \quad |\sigma'(x)|, |\sigma''(x)|\leq \Lambda_\sigma, \quad x\in\R;
\]
\item $G:[0, \infty)\to\R$ is bounded and twice differentiable, with bounded derivatives, such that 
\[
G(0) = G'(0) =0;
\]
\item $U_0\in(-R, 1)$ almost surely for some $R\geq1$, and is such that
\begin{equation}
\label{initial condition}
\sup_{t\in(0,T]}\int_{-\infty}^1 (1-x)t^{-\frac{3}{2}}e^{-\frac{(1-x)^2}{\Lambda_\sigma t}}\mathbb{P}(U_0 \in dx) < \infty
\end{equation}
for all $T>0$.
\end{itemize}
\end{assumptions}

\begin{rem}[The initial condition]
The condition \eqref{initial condition} on $U_0$ ensures that the density of the first hitting time $\tau_1$ in \eqref{limit equation} does not explode as $t\to0$ (see Lemma \ref{solution fixed point}).  It is natural, since the same condition is necessary to ensure that the first hitting time of $1$ by a standard Brownian motion starting from $U_0$ has a bounded density as $t\to0$.  

One can also note that, for example, \eqref{initial condition} is certainly satisfied if $U_0 = x_0$ for some $x_0<1$, or if $U_0$ has a density with respect to the Lebesgue measure that is bounded and such that 
\begin{equation*}
\mathbb{P}(U_0 \in dx) \leq \beta(1-x)dx, \qquad x\in[1-\epsilon, 1),
\end{equation*}
for some $\beta\geq0$ and $\epsilon>0$.

The fact that we require the density of $U_0$ to have compact support is for convenience, and is not necessary for our results in some cases of interest. For example if $b(x) = -\lambda x$ for $x, \lambda\in \R$, then it could be replaced by requiring all moments to be finite.  The difficulty for general $b$ satisfying the above conditions is to arrive at a satisfactory estimate of the density of the first hitting time of the threshold $1$ by a stochastic process with drift $b$ (see the Appendix).  The reason we choose to use work with $U_0$ in a compact set is to avoid diverting attention away from the core features of the model, as highlighted in Remark \ref{Comparison with old model}.  We instead include further discussion of this issue in the Appendix (Section \ref{Improvements}).
\end{rem}

The first main result is an existence and uniqueness result for a solution to the nonlinear equation \eqref{limit equation}. 

\begin{thm}
\label{thm: existence and uniqueness}
Under the Assumptions \ref{assumptions} there exists a pathwise unique solution $U = (U_t)_{t\in[0, T]}$ to the equation \eqref{limit equation}, for any $T>0$.
\end{thm}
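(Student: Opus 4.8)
The plan is to set up a fixed-point argument on the ``mean spiking rate'' function $e(t) := \E(M_t)$, exploiting the smoothing provided by the kernel $G$. Given a candidate nondecreasing function $e \in \mathcal{C}([0,T])$ with $e(0)=0$, one can consider the \emph{linear} equation obtained by replacing $\E(M_s)$ by $e(s)$ in \eqref{limit equation}; since $G(0)=G'(0)=0$ and $G$ is bounded and $\mathcal{C}^2$, the convolution term $t \mapsto \int_0^t G(t-s)e(s)\,ds$ is a $\mathcal{C}^2$ function of $t$ with controlled norm, so this linear equation is a standard one-dimensional SDE with a smooth additive time-dependent drift, a reset at hitting times of $1$, and a uniformly elliptic bounded diffusion $\sigma$. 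For such an equation, pathwise existence and uniqueness of $U$ (together with the counting process $M$ of its threshold-hitting times) is classical: construct it excursion-by-excursion between successive hitting times, using the strong Markov property at each reset and the fact that, since $\sigma$ is bounded below, the process a.s.\ reaches $1$ in finite time so the $\tau_k$ increase to $+\infty$. This defines a map $\Phi: e \mapsto \E(M^{(e)}_\cdot)$ on the space of nondecreasing continuous functions vanishing at $0$, and a fixed point of $\Phi$ yields a solution of \eqref{limit equation}; conversely any solution gives a fixed point, so pathwise uniqueness for \eqref{limit equation} reduces to: (a) well-posedness of the linear equation for each $e$, and (b) uniqueness of the fixed point of $\Phi$.

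The key technical input — and, I expect, the main obstacle — is a quantitative bound on the density of the first hitting time $\tau_1$ of the linear equation, uniform over the relevant class of inputs $e$ and locally integrable near $t=0$. This is exactly where Assumption \ref{assumptions} enters: the condition \eqref{initial condition} on $U_0$, the ellipticity bounds $\Lambda_\sigma^{-1}\le\sigma\le\Lambda_\sigma$, and the regularity of $b,H,G$ should yield, via a Girsanov change of measure removing $b$ and a time-change reducing $\sigma$ to $1$, an estimate of the form $\p(\tau_1 \in dt)/dt \le \psi(t)$ with $\int_0^T \psi(t)\,dt < \infty$ (indeed with $\psi$ integrable but possibly unbounded near $0$). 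Iterating this across resets and summing the series for $M_t = \sum_k \Ind_{[0,t]}(\tau_k)$ gives that $e^{(e)}(t) = \E(M_t^{(e)})$ is finite, continuous, nondecreasing, vanishes at $0$, and in fact is bounded on $[0,T]$ by a constant depending only on the data — this is presumably the content of the Lemma referenced in the excerpt (``Lemma \ref{solution fixed point}''), which I would invoke rather than reprove. The density estimate must be shown to depend on $e$ only through, say, $\|e\|_{\infty,T}$ (entering the drift bound through $G$), so that one can restrict $\Phi$ to an invariant set $\{0 \le e \le C_T,\ e \text{ nondecreasing},\ e(0)=0\}$.

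On this invariant convex compact (for uniform convergence) set I would then prove $\Phi$ is a contraction for the norm $\|e\|_{\infty,t}$ on a short time interval $[0,t_0]$, with $t_0$ depending only on the data. The mechanism: if $e_1,e_2$ are two inputs, couple the two linear equations with the same Brownian motion; the difference in drifts is controlled by $\|G\|_\infty \int_0^s |e_1-e_2| \le \|G\|_\infty t\,\|e_1-e_2\|_{\infty,t}$, which for small $t$ makes the hitting-time distributions close, and hence $|e^{(e_1)}(t) - e^{(e_2)}(t)|$ small — the crucial point being that the density bound prevents the spiking rate from being overly sensitive to small drift perturbations (this is precisely the ``smoothing'' the introduction advertises, and what fails in the instantaneous-transmission model where blow-up occurs). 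Quantifying ``close hitting-time laws'' in terms of the drift perturbation, uniformly over excursions and summed over $k$, is the delicate estimate; I would do it by comparing the two processes on the event that neither has spiked too many times (using the uniform-in-$e$ bound $e \le C_T$ to control the number of relevant excursions on $[0,t_0]$) and a Gronwall argument. Once contraction on $[0,t_0]$ is established, a fixed point exists and is unique there; then one extends to $[0,T]$ by iterating the argument on successive intervals $[t_0, 2t_0], [2t_0, 3t_0],\dots$, at each step restarting from the (now determined) state of the system at the endpoint, noting $t_0$ can be taken uniform because all constants depend only on the data and on the a priori bound $C_T$. Pathwise uniqueness for \eqref{limit equation} then follows: any two solutions induce the same $e(\cdot)$ by fixed-point uniqueness, hence solve the same linear equation, hence coincide pathwise by (a).
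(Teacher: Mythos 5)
Your overall scaffold — reformulate as a fixed point of $e\mapsto\E(M^{(e)}_\cdot)$, exploit the smoothing of $G$ so the driving drift is $\mathcal{C}^2$, and feed hitting-time density estimates into a Gronwall-type closure — is the same as the paper's. But the technical route you propose diverges in two substantial ways, and the second one is where a gap sits.

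First, you set up the contraction in the uniform norm $\|e\|_{\infty,t}$ on a short interval $[0,t_0]$ and then patch across $[t_0,2t_0],\dots$. The paper deliberately avoids this: it works on $\mathcal{C}^1([0,T])$ and proves a single \emph{integral} inequality
$\|\Phi(h)-\Phi(\tilde h)\|_{\mathcal{C}^1([0,T])}\leq C_T\int_0^T\|h-\tilde h\|_{\mathcal{C}^1([0,s])}\,ds$,
which closes by Gronwall in one pass over all of $[0,T]$ (the authors remark explicitly that the presence of $G$ is what removes the need for time-splitting). The reason the $\mathcal{C}^1$ topology is the right choice is structural: the strong Markov property produces the renewal identity
$\Phi(h)(t)=\p(\tau^h_1\leq t)+\int_0^t\p_0(\tau^{h,\sharp s}_1\leq t-s)\,\Phi(h)'(s)\,ds$,
and any attempt to estimate $|\Phi(h)(t)-\Phi(\tilde h)(t)|$ directly from this formula immediately involves $\Phi(h)'-\Phi(\tilde h)'$; so you are forced to control derivatives anyway, and the cleanest way is to work at the derivative level from the start (which is exactly what feeds into the first-passage-time \emph{density} bounds of the Appendix, rather than the CDFs).

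Second, and more substantively: you flag ``quantifying close hitting-time laws in terms of the drift perturbation, uniformly over excursions and summed over $k$'' as the delicate step, and you do not resolve it. This is precisely the hard part, and your proposed expansion $M_t=\sum_k\Ind_{[0,t]}(\tau_k)$ with an excursion-by-excursion comparison coupled by the same Brownian motion does not obviously close: the drift perturbation at the $k$-th excursion depends on the entire history of the comparison through $\int_0^s|e_1-e_2|$, and errors in earlier hitting times shift the starting point and time-origin of later excursions, so the recursion is not a simple geometric sum. The paper sidesteps this entirely through the renewal identity above and a perturbative bound on the hitting-time \emph{density} $p^x_\alpha(t)$ (Propositions \ref{Pauwels}, \ref{density bound absolute}, \ref{density bound diff}), which gives a Lipschitz estimate $|p^x_{\alpha}(t)-p^x_{\tilde\alpha}(t)|\leq C_T e^{C_T x^2}\|\alpha-\tilde\alpha\|_{\mathcal{C}^1}(1-x)t^{-3/2}e^{-(1-x)^2/\Lambda_\sigma t}$ that is already uniform in the starting point and hence applies to every excursion at once; this is what Lemma \ref{integral lem} assembles. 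So your strategy is plausible but would require a nontrivial new argument precisely where you wave your hands, whereas the paper's $\mathcal{C}^1$/renewal-identity route makes that step disappear. (A minor further point: you also omit the preliminary reformulation in terms of the continuous process $Z=U+M$, which the paper uses to avoid handling jumps in the SDE analysis; and your justification of no accumulation of $\tau_k$ via ellipticity of $\sigma$ is not quite the right reason — the paper rules out explosion through an a priori moment bound on $\sup_s|Z^h_s|$ and Gronwall, not through the lower bound on $\sigma$.)
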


The second result is that, under some conditions on the weights $J_{ij}$, we have the convergence of the weighted empirical measure towards the limit equation.

\begin{thm}
\label{thm: convergence}
Let $(U^i_0)_{i\geq1}$ be a family of independent identically distributed random variables with the same law as $U_0$, and let $(U^1, \dots, U^N)$ be the solution to the particle system \eqref{particle system} on $[0,T]$, $T\geq0$.  Moreover, suppose that for all $i\in\mathbb{N}$
\begin{equation}
\label{J condition}
\frac{\sum_{j=1}^NJ^2_{ij}}{(S^i_N)^2} \to 0 \quad \mathrm{as}\ N\to\infty.
\end{equation}
Then, under the Assumptions \ref{assumptions}, for every $i\in\mathbb{N}$,
\[
\mathrm{Law}\left(\frac{1}{S^N_i}\sum_{\rev{j=1}}^NJ_{ij}\delta_{U^j}\right) \Rightarrow \delta_{\mathrm{Law}((U_t)_{t\in[0, T]})},
\]
in the space $\mathcal{P}(\mathcal{P}(\mathcal{D}([0, T])))$, where $(U_t)_{t\in[0, T]}$ is the unique solution to the limit equation \eqref{limit equation} on $[0, T]$.  Here $\mathcal{D}([0, T])$ denotes the Skorohod space on $[0, T]$, and $\Rightarrow$ indicates that the convergence is in the weak sense.
\end{thm}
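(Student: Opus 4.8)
The plan is to combine a synchronous coupling with a weighted law of large numbers, the latter being exactly where \eqref{J condition} enters. Since the limit $\delta_{\mathrm{Law}((U_t))}$ is a Dirac mass, the assertion is equivalent to saying that the random measure $\mu^N_i:=\frac1{S^N_i}\sum_{j=1}^N J_{ij}\delta_{U^j}$ converges \emph{in probability}, in the space $(\mathcal P(\mathcal D([0,T])),\mathbf{d})$ where $\mathbf{d}$ is the Wasserstein-$1$ metric built from $d_{\mathrm{Sk}}\wedge 1$ (which metrizes weak convergence on the Polish space $\mathcal D([0,T])$), to the deterministic law $P:=\mathrm{Law}((U_t)_{t\in[0,T]})$ of the solution of \eqref{limit equation}. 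On the probability space carrying all the $(W^i,U^i_0)_{i\ge1}$, introduce for each $j$ the process $\bar U^j$ solving \eqref{limit equation} driven by $W^j$ and started from $U^j_0$; this exists and is pathwise unique by Theorem \ref{thm: existence and uniqueness}, and because the mean-field input $t\mapsto\int_0^t G(t-s)\E(M_s)\,ds$ in \eqref{limit equation} is the \emph{same} deterministic function for every $j$, the $\bar U^j$ are i.i.d.\ with common law $P$. Writing $\bar\mu^N_i:=\frac1{S^N_i}\sum_{j=1}^N J_{ij}\delta_{\bar U^j}$, the triangle inequality $\mathbf{d}(\mu^N_i,P)\le\mathbf{d}(\mu^N_i,\bar\mu^N_i)+\mathbf{d}(\bar\mu^N_i,P)$ reduces the proof to showing that each of the two terms tends to $0$ in probability.

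The second term is the weighted law of large numbers. Since each $\bar U^j\sim P$ and $\sum_j J_{ij}=S^N_i$, one has $\E[\bar\mu^N_i]=P$ exactly, while for any bounded Lipschitz $f$ on $\mathcal D([0,T])$ the independence of the $\bar U^j$ gives $\mathrm{Var}\langle\bar\mu^N_i,f\rangle\le\|f\|_\infty^2\sum_j J_{ij}^2/(S^N_i)^2\to0$ by \eqref{J condition}. Testing against a countable convergence-determining family of bounded Lipschitz functions then yields $\mathbf{d}(\bar\mu^N_i,P)\to0$ in probability. This is precisely the step in which \eqref{J condition} guarantees that no finite group of particles dominates the weighted empirical average.

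For the first term, the coupling $\frac1{S^N_i}\sum_j J_{ij}\delta_{(U^j,\bar U^j)}$ gives, pathwise, $\mathbf{d}(\mu^N_i,\bar\mu^N_i)\le\frac1{S^N_i}\sum_{j=1}^N J_{ij}\big(d_{\mathrm{Sk}}(U^j,\bar U^j)\wedge1\big)$, so by Markov's inequality it suffices to show $\sup_j\E[d_{\mathrm{Sk}}(U^j,\bar U^j)\wedge1]\to0$, where $U^j$ is the $j$-th coordinate of the $N$-particle solution \eqref{particle system} (and so depends on $N$). Now $U^j$ and $\bar U^j$ are driven by the same noise and initial data and solve the same equation apart from the interaction terms
\[
A^j_t:=\frac1{S^N_j}\sum_{l=1}^N J_{jl}\int_0^t G(t-s)M^l_s\,ds,\qquad \bar A^j_t:=\int_0^t G(t-s)\E(M_s)\,ds.
\]
The hypothesis $G(0)=G'(0)=0$ plays a double role: it makes $t\mapsto A^j_t,\bar A^j_t$ twice differentiable (so both processes are genuine jump-diffusions with locally bounded time-dependent drift), and it gives the loss-free bound $\|A^j-\bar A^j\|_{\infty,t}\le\|G\|_\infty\int_0^t\frac1{S^N_j}\sum_l J_{jl}\,|M^l_s-\bar M^l_s|\,ds$. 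A stability estimate of the same nature as in the proof of Theorem \ref{thm: existence and uniqueness} then gives, uniformly in $l$ and $N$, $\E[d_{\mathrm{Sk}}(U^l,\bar U^l)\wedge1]\le C\,\E\|A^l-\bar A^l\|_{\infty,T}$ and $\E|M^l_t-\bar M^l_t|\le C\int_0^t\E\|A^l-\bar A^l\|_{\infty,s}\,ds+C\,\E\|A^l-\bar A^l\|_{\infty,t}$, the point being that the spike counts cannot smear much under a small perturbation of the drift because the hitting-time law has a bounded density. Splitting $\frac1{S^N_j}\sum_l J_{jl}(M^l_s-\E M_s)=\frac1{S^N_j}\sum_l J_{jl}(M^l_s-\bar M^l_s)+\frac1{S^N_j}\sum_l J_{jl}(\bar M^l_s-\E M_s)$, the second piece has $L^1$-norm $\le C\,(\sum_l J_{jl}^2/(S^N_j)^2)^{1/2}\to0$ by \eqref{J condition} (a weighted LLN, the $\bar M^l$ being i.i.d.\ with mean $\E M_s$ and $\E[\bar M_T^2]<\infty$), while the first feeds back through the previous displays. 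Combined with an a priori bound $\sup_{N,i}\E[M^i_T]<\infty$ and setting $\rho_N(t):=\sup_l\E\|A^l-\bar A^l\|_{\infty,t}$, one obtains $\rho_N(t)\le C\int_0^t\rho_N(s)\,ds+C\,\delta_N$ with $\delta_N\to0$, whence $\rho_N(T)\to0$ by Gronwall's lemma and $\sup_j\E[d_{\mathrm{Sk}}(U^j,\bar U^j)\wedge1]\to0$, completing the argument.

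The main obstacle is the stability estimate for the spike-counting processes in terms of the sup-norm distance of the inputs: one must control, uniformly over all particles and over $N$, how much a small perturbation of the (smooth) drift changes the number and location of threshold crossings in expectation. This rests on precise control of the density of the first hitting time of the threshold — near $t=0$ via \eqref{initial condition}, and globally via the heat-kernel-type estimates for a diffusion with drift $b$ (the Appendix) — and is the technical heart shared with the proof of Theorem \ref{thm: existence and uniqueness}; it is precisely the smoothing induced by $G$ through $G(0)=G'(0)=0$ that prevents any loss of derivatives here and, ultimately, rules out the blow-up cascade of the models without a dendritic component.
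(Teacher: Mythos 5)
Your proposal takes a genuinely different route from the paper: you run a Sznitman-style synchronous coupling (construct i.i.d.\ copies $\bar U^j$ of the limit equation driven by the same $(W^j,U_0^j)$, use a weighted LLN for $\bar\mu^N_i\to P$, and close the gap $\mathbf d(\mu^N_i,\bar\mu^N_i)\to0$ via a Gronwall loop), whereas the paper proves tightness of $(\Pi^N_i)_N$, shows that every limit point is supported on solutions of the nonlinear martingale problem (Lemmas \ref{lem:functional continuity} and \ref{lem:crossing property}), and then invokes uniqueness. Your weighted LLN step is correct and is the right place to use \eqref{J condition}.

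However, the coupling argument as written has a genuine gap at its technical heart: the inequality
\[
\E|M^l_t-\bar M^l_t|\ \le\ C\int_0^t\E\|A^l-\bar A^l\|_{\infty,s}\,ds\ +\ C\,\E\|A^l-\bar A^l\|_{\infty,t}
\]
is not ``of the same nature'' as what is proved in Lemma \ref{integral lem} and Propositions \ref{density bound absolute}--\ref{density bound diff}. Those control the \emph{difference of laws} of hitting times for a single path with two \emph{deterministic} forcings $f_h, f_{\tilde h}$, i.e.\ quantities like $|\p(\tau^h_1\le t)-\p(\tau^{\tilde h}_1\le t)|$, which is the relevant object for the fixed-point map $\Phi(h)=\E(M^h)$. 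The coupling needs the much stronger $L^1$ \emph{pathwise} stability $\E|M^l_t-\bar M^l_t|$, i.e.\ the probability that the two coupled paths spike a different number of times. This requires something like a bounded density for the running supremum $\sup_{s\le t}Z_s$ (so that the event ``one path crossed, the other didn't'' has probability $O(\|A^l-\bar A^l\|_\infty)$), \emph{conditionally} on the random forcing $A^l$ --- and $A^l$ is adapted to the full filtration $(W^1,\dots,W^N)$, not independent of $W^l$, so the conditioning is delicate, and after the first mismatch the drifts $b(Z^l_s-M^l_s)$ and $b(\bar Z^l_s-\bar M^l_s)$ differ by $O(1)$, not $O(\|A^l-\bar A^l\|)$, so the Gronwall loop must be closed in $L^1$ with care. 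None of this is supplied by the paper's Appendix. The paper's route via tightness and the nonlinear martingale problem sidesteps precisely this pathwise stability estimate; the price is the ad hoc crossing-property Lemma \ref{lem:crossing property}, while the coupling route, if completed, would yield quantitative convergence rates. To make your argument rigorous you would need to prove the conditional bounded-density estimate for the running maximum and handle the measurability/adaptedness of $A^l$ relative to $W^l$ --- these are new ingredients, not corollaries of Lemma \ref{integral lem}.
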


\begin{rem}[Comparison with previous models]
\label{Comparison with old model}
The nonlinear equation \eqref{limit equation} and particle system \eqref{particle system} are similar to the ones studied in \cite{DIRT}  and \cite{DIRT2} respectively (and also in \cite{CCP} and \cite{CGGS} from a PDE viewpoint).  Indeed, one could recover the equation and particle system considered therein by formally taking $G = \delta_0$, $H(t) \equiv0$, $\sigma\equiv 1$ and $J_{ij}=1$ for all $i, j$.  However, the conditions we here impose on $G$ prevent this: the point is that they ensure that the discontinuities of the counting processes $M$ and $M^i$ are smoothed.  
From a biological standpoint, the reason for the presence of the kernel $G$ is described in more detail in Section \ref{sec: derivation}, but loosely it describes the fact that an arriving spike from a presynaptic neuron should be smoothly transferred from the synapse to the soma, and that there are no synapses on the soma itself (so that there is no instantaneous spike transmission).  In this respect our model is in fact more biologically realistic than the previously considered one.

Mathematically, the extra regularity due to the smoothing effect of $G$ renders our analysis more tractable, and allows us to make two important generalizations in comparison with the previously studied model: 1) we can include non-constant synaptic weights $J_{ij}$, and 2) we treat the case of a general non-constant diffusion coefficient $\sigma$.

A final remark is that by Theorem \ref{thm: existence and uniqueness}, we see that the limit equation \eqref{limit equation} does not exhibit the blow-up phenomenon observed for the previous model in \cite{CCP}.
 \end{rem}
 
The strategy to prove these two results is actually quite standard, though the details are not.  For Theorem \ref{thm: existence and uniqueness} we develop a fixed point argument taking inspiration from the classical notes of Sznitman \cite{Sznitman}, which concern particle systems with regular drift and diffusion terms.  The difficulty in our case stems from the fact that the interactions we consider are very singular: the particles in \eqref{particle system} interact through their hitting times of the threshold $1$.  The proof of Theorem \ref{thm: convergence} then proceeds in two steps: we first show that the family of laws of the weighted empirical measures is tight, and then prove that the law under any limit point of this tight sequence must be the law of a solution to the limit equation.  Since such a law is unique by Theorem \ref{thm: existence and uniqueness}, the result follows.  Once again this step is significantly complicated in comparison with the classical case due to the singular nature of the interactions.

\begin{example}
The condition \eqref{J condition} on the $J_{ij}$s is both satisfied in the classical case where $J_{ij} = 1$ for all $i, j\in\{1, \dots, N\}$, as well as in more interesting cases such as 
\[
J_{ij} = \frac{1}{|i-j|},\ j\neq i, \qquad J_{ii} = 0,
\]
for $i, j\in\{1, \dots, N\}$.  The latter case describes a situation where one has a decay in the synaptic strength between neurons that are far away from each other.
\end{example}

We finally mention a corollary to  Theorem \ref{thm: convergence}, which applies in the particular case when the system is exchangeable.  This is the standard propagation of chaos property, which follows by slightly adapting \cite[Proposition 2.2]{Sznitman}.

\begin{cor}
Suppose that we are in the situation of Theorem \ref{thm: convergence}, but that in addition we have that $J_{ij}$ does not depend on $i$, for all $i,j\in\{1, \dots, N\}$.  Then for any $\phi_1, \dots, \phi_k \in \mathcal{C}_b(\mathcal{D}([0, T]))$ it holds that
\[
\E\left(\phi_1(U^1)\dots \phi_k(U^k)\right) \to \prod_{l=1}^k\E(\phi_l(U)), \quad \mathrm{as}\ N\to\infty.
\]

\end{cor}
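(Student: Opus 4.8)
The plan is to deduce the corollary from Theorem~\ref{thm: convergence} via the classical equivalence between convergence of empirical measures and propagation of chaos on a Polish state space, following \cite[Proposition 2.2]{Sznitman}; here the relevant state space is $E:=\mathcal{D}([0,T])$ with the Skorohod topology, which is Polish, so that equivalence is available.

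The first step is to record that, under the extra hypothesis on the weights, the family $(U^1,\dots,U^N)$ is exchangeable. Indeed, the normalisation $S^N_i$ is then the same for every $i$, and the interaction term in \eqref{particle system} felt by particle $i$ becomes a functional of $(M^1,\dots,M^N)$ that is simultaneously common to all particles and symmetric in its arguments; correspondingly, the weighted empirical measure in Theorem~\ref{thm: convergence} reduces to the ordinary empirical measure $\frac{1}{N}\sum_{j=1}^N\delta_{U^j}$. Since the inputs $(U^i_0,W^i)_{1\le i\le N}$ are i.i.d.\ and, by pathwise well-posedness of the finite system \eqref{particle system}, the trajectory vector $(U^1,\dots,U^N)$ is a measurable functional of these inputs that is equivariant under permutations of $\{1,\dots,N\}$, exchangeability follows.

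I would then apply Theorem~\ref{thm: convergence} (its hypotheses, in particular \eqref{J condition}, being assumed here): it gives $\mathrm{Law}\big(\tfrac{1}{N}\sum_{j=1}^N\delta_{U^j}\big)\Rightarrow\delta_{\mu}$ in $\mathcal{P}(\mathcal{P}(E))$, with $\mu:=\mathrm{Law}((U_t)_{t\in[0,T]})$ the unique solution of \eqref{limit equation} given by Theorem~\ref{thm: existence and uniqueness}; that is, the empirical measures converge to the \emph{deterministic} limit $\mu$. For an exchangeable family this is exactly the statement that the family is $\mu$-chaotic, so by \cite[Proposition 2.2]{Sznitman} one obtains, for every fixed $k$, weak convergence $(U^1,\dots,U^k)\Rightarrow\mu^{\otimes k}$ in $E^k$. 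Testing against the bounded continuous map $(x_1,\dots,x_k)\mapsto\phi_1(x_1)\cdots\phi_k(x_k)$ on $E^k$ then yields $\E(\phi_1(U^1)\cdots\phi_k(U^k))\to\prod_{l=1}^k\E(\phi_l(U))$, as required.

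The argument is essentially routine, as suggested by the phrase ``slightly adapting \cite[Proposition 2.2]{Sznitman}''. The only two points needing care are: (i) the exchangeability step, which rests on pathwise well-posedness of the $N$-particle system \eqref{particle system} --- the same well-posedness underlying the tightness and identification arguments of Section~\ref{sec:convergence} --- so that permuting the i.i.d.\ inputs really does permute the solution; and (ii) checking that the mode of convergence delivered by Theorem~\ref{thm: convergence}, namely weak convergence in $\mathcal{P}(\mathcal{P}(\mathcal{D}([0,T])))$ to a Dirac mass, is precisely the hypothesis required by the $\mu$-chaos characterisation for the Polish space $E=\mathcal{D}([0,T])$. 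Neither constitutes a genuine obstacle.
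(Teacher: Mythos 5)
The route you take --- establish exchangeability and then invoke Sznitman's equivalence --- is the one the paper alludes to, but your first step fails under the stated hypothesis, and this is a genuine gap. From ``$J_{ij}$ does not depend on $i$'', i.e.\ $J_{ij}=J_j$, one does obtain that $S^N_i$ is independent of $i$ and that the interaction term $\frac{1}{S^N}\sum_j J_j\int_0^tG(t-s)M^j_s\,ds$ is \emph{common} to all particles. But that functional is \emph{not} symmetric in $(M^1,\dots,M^N)$ unless all the $J_j$ are equal, and the weighted empirical measure $\frac{1}{S^N}\sum_jJ_j\delta_{U^j}$ does \emph{not} reduce to $\frac{1}{N}\sum_j\delta_{U^j}$ as you claim. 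Consequently the family $(U^1,\dots,U^N)$ is not exchangeable under this hypothesis alone: take for instance $N=2$, $J_1=1$, $J_2=0$, so that particle 1 is self-excited (its drift feels $M^1$) while particle 2 is passively driven by $M^1$; the law of $(U^1,U^2)$ is not invariant under the label swap, and the solution map is not permutation-equivariant in the driving $(W^i,U^i_0)$.

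Without exchangeability, Sznitman's Proposition 2.2 cannot be invoked as you use it: its hypotheses are a symmetric law on $E^N$ together with the uniform empirical measure, and neither is available here. The ``slight adaptation'' the paper refers to is precisely what you need to supply. One route is to observe that the common interaction $L^N_t=\frac{1}{S^N}\sum_jJ_jM^j_t=\langle\bar\mu^N_i,m_t\rangle$ converges (in probability, uniformly on $[0,T]$) to the deterministic function $\E(M_t)$ --- this follows from Theorem \ref{thm: convergence} together with the continuity/crossing arguments of Section \ref{sec:convergence} and the moment bound of Lemma \ref{lem: moment bounds} --- and then to decouple by comparing each $U^i$ with the i.i.d.\ copies of \eqref{limit equation} built from the same $(U^i_0,W^i)$, the discrepancy being controlled by $\|L^N-\E(M_\cdot)\|_{\infty,T}$. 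Alternatively, if one strengthens the hypothesis to $J_{ij}$ constant in \emph{both} indices (the ``classical'' case in the preceding Example), then your argument is correct as written, and is presumably what the authors had in mind.
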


\section{Derivation of the particle system \eqref{particle system}}
\label{sec: derivation}
In this section we motivate the study of the system \eqref{particle system} as a model for the behavior of a network of $N$ neurons, labelled $i\in\{1, \dots, N\}$ (though as mentioned in Section \ref{conclusion}, there may exist other applications motivating similar models). We suppose that each neuron consists of a soma and a dendritic tree (ignoring the role of the axon).

\vspace{0.3cm}
\noindent\textit{\bf{The dendritic tree:}}
We model the dendritic tree of each neuron by a uniform infinite one-dimensional cable $\R$, with the soma located a position $0$.
Let $V^i_t(\xi), \xi\in \R$ be the membrane potential at $\xi\in \R$ on the dendritic tree of the $i$th neuron at time $t\geq0$.  We then model the behavior of $V^i_t(\xi)$ by the cable equation
\begin{equation}
\label{cable equation}
\partial_t V_t^i(\xi) = \frac{1}{2} \partial_{\xi}^2 V_t^i(\xi) - \gamma V_t^i(\xi) + f^i_{ext}(t, \xi), \qquad t>0, \xi\in\R,
\end{equation}
for some $ \gamma> 0$, where $f^i_{ext}(t, \xi)$ is the applied current density at time $t$ and position $\xi$ (the applied current per unit length).   This is quite a standard model for the membrane potential across a one-dimensional neuron (see \cite{gerstner:kistler} Section 2.5.1 or \cite{tuckwell}, Chapter 4). 

\vspace{0.3cm}
\noindent\textit{{\bf The soma:}}  This is the processing unit of the neuron that decides whether or not to instigate an action potential.  We use the noisy integrate-and-fire model to describe the behavior.  In this model the electrical potential across the soma evolves continuously according to a stochastic differential equation until it hits a threshold (which for simplicity we take to be $1$), at which point it responds by instigating an action potential.  After action potential initiation the electrical potential across the soma is instantaneously reset to 0.

Precisely, let $U_t^i$ be the potential across the $i$th soma at time $t\geq0$, which evolves according to
\begin{equation}
\label{noisy integrate-and-fire}
U_t^i = U^i_0  + \int_0^t b(U_s^i)ds + I_i(t) - M^i_t + \int_0^t\sigma(U_s^i)dW_s^i, \quad t>0,
\end{equation}
where $U^i_0<1$ almost surely, $(W_t^i)_{t\geq0}$ is a Brownian motion, assumed independent from neuron to neuron, and $I_i(t)$ is the input current into the $i$th soma at time $t$.  Moreover, the instantaneous resets of the process $(U^i_t)_{t\geq0}$ from the threshold $1$ to $0$ are described by the process $(M^i_t)_{t\geq0}$ defined by
\[
M_t^i := \sum_{k=1}^\infty \Ind_{[0,t]}(\tau_k^i), \qquad \tau_k^i = \inf\{t\geq \tau_{k-1}^i: U^i_{t-} \geq 1\}, \quad \tau_k^0 = 0.
\]
Hence $\tau_k^i$ is the $k$th spiking time of neuron $i$, and it is clear that whenever $U^i_{t-} =1$ then $U^i_{t} = 0$.

\vspace{0.3cm}
\noindent\textit{{\bf The coupling:}}
The point is that in order to describe the network effect on the system, equations \eqref{cable equation} and \eqref{noisy integrate-and-fire} should be coupled.  In particular, $f^i_{ext}(t, \xi)$ in \eqref{cable equation} should depend on the spike trains of all the presynaptic neurons connected to neuron $i$, while $I_i(t)$ in \eqref{noisy integrate-and-fire} should depend on the potential across the dendritic tree of neuron $i$.

The latter coupling is easier to describe.  Indeed, we simply impose that
\begin{equation}
\label{form I}
I_i(t) = V_t^i(0), \quad \forall t\geq0,
\end{equation}
which models the fact that the input current to the soma is proportional to the value of the potential across the dendritic tree at position $0$ (where the soma is located).

\begin{rem}
Let us note that up until this point our model is similar to the one proposed in \cite{bressloff:coombes}.  We have however added a noise term, and our purpose is very different.
\end{rem}

For the other coupling, for full generality we would like to take
\begin{equation}
\label{form f wanted}
f^i_{ext}(t, \xi) = \sum_{j:j \neq i} \rho^{(N)}_{j\to i}(\xi) \sum_{k=1}^\infty \delta_0(t - \tau_k^j), \qquad \forall t>0, \xi\in\R,
\end{equation}
where the function $\rho^{(N)}_{j\to i}$ describes the density of synapses on the dendritic tree of neuron $i$ coming from presynaptic neuron $j$, and $\sum_{k} \delta_0(t - \tau_k^j)$ is the spike train of neuron $j$ (recall that $\tau_k^j$ is the time of the $k$th spike of neuron $j$).  Here $\delta_0$ is the Dirac distribution at zero.

However, in a first instance we make the following simplification.  In fact we take
\begin{equation}
\label{form f}
f^i_{ext}(t, \xi) := \frac{1}{S^{N}_i}\sum_{j=1}^N J_{ij}\rho(\xi) \sum_{k=1}^\infty \delta_0(t - \tau_k^j), \qquad \forall t>0, \xi\in\R,
\end{equation}
where $J_{ij}\geq0$ for all $i, j\in\{1, \dots, N\}$, $S^{N}_i := \sum_{j=1}^NJ_{ij}$ and $\rho:\R\to\R$ is smooth and bounded.

\begin{rem}
The interpretation of this simplification is that we are supposing that the shape of the distribution of synapses on the dendritic tree of a neuron is homogeneous throughout the network.  However, we do not suppose that the \text{strengths} of the connections are homogeneous (they are given by $J_{ij}$). 
\end{rem}

\vspace{0.3cm}
\noindent\textit{{\bf Summary:}}
Combining \eqref{cable equation}, \eqref{noisy integrate-and-fire}, \eqref{form I} and \eqref{form f}, we arrive at the following coupled system
 \begin{equation}
\label{coupled particle system}
\left\{\begin{split}
\partial_t V_t^i(\xi) & = \frac{1}{2} \partial_{\xi}^2 V_t^i(\xi) - \gamma V_t^i(\xi) + \frac{\rho(\xi)}{S^N_i} \sum_{j=1}^NJ_{ij} \sum_{k=1}^\infty \delta_0(t - \tau_k^j)\\
U_t^i &= U^i_0  + \int_0^t b(U_s^i)ds + V^i_t(0) - M^i_t + \int_0^t\sigma(U_s^i)dW_s^i,
\end{split}\right.
\end{equation}
for $t\geq0$ and $i\in\{1, \dots, N\}$.   We would like to use this to derive a self-contained system for the potentials  $(U^i)_{i\in\{1,\dots, N\}}$ across each soma in the network, since it is the potential across the soma that determines the behavior of each neuron.  To this end we can in fact solve the first equation in \eqref{coupled particle system} with the help of Green's function, which we define by
\[
\mathcal{G}(t,\xi) := \frac{1}{\sqrt{2\pi t}}e^{-\gamma t}\exp(-\xi^2/2t), \qquad t>0, \xi\in\R.
\]
This is the fundamental solution to the cable equation.
Indeed, then for each $i\in\{1, \dots, N\}$, $t>0$ and  $\xi'\in\R$
\begin{align*}
V_t^i(\xi') = [\mathcal{G}(t, \cdot) *V_0^i](\xi')  + \int_0^t\int_{-\infty}^\infty \frac{\mathcal{G}( t -s, \xi - \xi')}{S^N_i}\rho(\xi) \sum_{j=1}^NJ_{ij} \sum_{k=1}^\infty \delta_0(s - \tau_k^j)d\xi ds,
\end{align*}
where we have used the standard $*$ notation to denote convolution.  Moreover, we have that, in the sense of distributions, $\sum_{k} \delta_0(s - \tau_k^j) = [d/ds]M_s^j$ for all  $s\geq0$, $j\in\{1, \dots, N\}$.
Thus, by evaluating $V_t^i$ at $\xi'=0$, we can write
\begin{align}
\label{before IBP}
V_t^i(0) &= [\mathcal{G}(t, \cdot ) *V_0^i](0) + \int_0^t[\mathcal{G}(t -s,\cdot)*\rho](0)  \frac{d}{ds}\left(\frac{1}{S^N_i} \sum_{j=1}^N J_{ij}M_s^j\right) ds.
\end{align}

\vspace{0.3cm}
\noindent\textit{{\bf Assumption of no synapses on the soma:}}  Since the integral on the right-hand side of \eqref{before IBP} involves a smooth function multiplied by a distribution, we must understand it with the help of an integration by parts.  In order to avoid boundary terms (which would significantly complicate matters since then $t\mapsto V_t^i(0)$ would be discontinuous -- see \cite{DIRT, DIRT2}), we make the biologically plausible assumption that there are no synapses on the soma itself.  Mathematically we translate this into the assumption that $\rho: \R\to\R$ is such that
\begin{equation}
\label{assumption rho}
\rho(0) = \rho''(0) = \rho^{(iv)}(0) = 0.
\end{equation}
Such an assumption is not uncommon in the biological literature (see for example \cite{bressloff:coombes}).  The intuition is that there will be no instantaneous transmission of a presynaptic action potential to the soma, and that the input current to $i$th the soma, which is given by $V_t^i(0)$, is continuous in time.

Anyway, under this assumption, after an integration by parts, \eqref{before IBP} becomes
\begin{equation}
\label{V particle}
\begin{split}
V_t^i(0) &= [\mathcal{G}(t, \cdot) *V_0^i](0) + \sum_{j=1}^N\frac{J_{ij}}{S^N_i} \int_0^t\frac{d}{dt}[\mathcal{G}(t -s, \cdot)*\rho](0) M_s^j ds.
\end{split}
\end{equation}
By substituting this into the second equation in \eqref{coupled particle system}, we derive the desired self-contained system for $(U^i)_{i\in\{1,\dots, N\}}$:
\begin{equation}
\label{U particle self contained}
U_t^i = U^i_0 + H_{V_0^i}(t)  + \int_0^t b(U_s^i)ds + \sum_{j=1}^N\frac{J_{ij}}{S^N_i} \int_0^t{G}_\rho(t -s) M_s^j ds - M^i_t + \int_0^t\sigma(U_s^i)dW_s^i,
\end{equation}
where we have used the notation ${G}_\rho(t -s):= (d/dt)[\mathcal{G}(t -s, \cdot)*\rho](0)$ and $H_{V_0^i}(t) := [\mathcal{G}(t, \cdot) *V_0^i](0)$.
We thus see that \eqref{U particle self contained} is exactly of the form \eqref{particle system} with $H=H_{V_0^i}$ and $G=G_\rho$. The behavior of such interacting particle systems in the limit as $N\to\infty$ is of great interest, since there are typically very large numbers of neurons in neural networks.

We must finally check that $H_{V_0^i}$ and ${G}_\rho$ satisfy the conditions stated in Assumptions \ref{assumptions}, i.e. that $G_\rho$ and $H_{V_0^i}$ are twice continuously differentiable and bounded with bounded derivatives, and that $G_\rho(0) = G_\rho'(0) = 0$.
This is clearly true for $H_{V_0^i}$ whenever $V^i_0$ is bounded on $\R$.  For ${G}_\rho$, note that by definition
\begin{align*}
G_\rho(t) 
= \frac{d}{dt}\left(\frac{e^{-\gamma t}}{\sqrt{2\pi}}\int_{-\infty}^\infty e^{-\frac{\xi^2}{2}}\rho(\xi\sqrt{t})d\xi\right)
= \frac{e^{-\gamma t}}{\sqrt{2\pi}}\int_{-\infty}^\infty e^{-\frac{\xi^2}{2}}\left[ \rho''(\xi\sqrt{t}) -\gamma\rho(\xi\sqrt{t})\right]d\xi
\end{align*}
so that $G_\rho(0) = 0$ by \eqref{assumption rho}, and  $\|G_\rho\|_\infty <\infty$ since  $\|\rho\|_{\mathcal{C}_b^2(\R)}<\infty$.  Moreover

\begin{align*}
&\frac{d}{dt}G_\rho(t) 
= \frac{e^{-\gamma t}}{\sqrt{2\pi}}\int_{-\infty}^\infty e^{-\frac{\xi^2}{2}}\left[ \gamma^2\rho(\xi\sqrt{t}) - 2 \gamma\rho''(\xi\sqrt{t}) +\rho^{(iv)}(\xi\sqrt{t})\right]d\xi,
\end{align*}
so that $G'_\rho(0)=0$ by \eqref{assumption rho} again, and $G_\rho'$ is bounded.  By taking another derivative, one can also see that the second derivative of $G_\rho$ exists and is continuous and bounded.

\section{Proof of Theorem \ref{thm: existence and uniqueness}: Well-posedness of limit equation \eqref{limit equation}}
\label{sec: e+u}

The aim of this section is to prove Theorem \ref{thm: existence and uniqueness}, i.e the existence and uniqueness of a solution to the nonlinear equation \eqref{limit equation}.  Due to the discontinuous nature of any solution to \eqref{limit equation} (the potential $U$ jumps to $0$ whenever it reaches $1$), it is convenient to reformulate the system as a continuous one, by instead looking at the dynamics of $Z_t := U_t+M_t$.  By \eqref{limit equation}, it is straightforward to check that $Z$ must then satisfy
\begin{equation}
\begin{cases}
Z_t &= U_0 + H(t)  + \int_0^t b(Z_s - M_s)ds +\int_0^t   {G}(t -s)\E(M_s) ds + \int_0^t\sigma(Z_s - M_s)dW_s\\
M_t &= \lfloor(\sup_{s\leq t} Z_s )_{+}\rfloor,\\
\tau_k &= \inf\{t\geq0: Z_t \geq k\},\ k\in\mathbb{N}\backslash\{0\}, \ \tau_0 = 0,
\end{cases}
\label{Z limit equation}
\end{equation}
where $\lfloor x \rfloor$ and $(x)_+$ indicate the integer part of $x$ and $\max\{x, 0\}$ respectively, for any $x\in\R$. 
Indeed, since $U_{t}<1$ it is clear that for a given $k \geq 0$ such that 
$\tau_{k} \leq t < \tau_{k+1}$,  
\[
 \lfloor(\sup_{s\leq t} Z_s )_{+}\rfloor \leq  \lfloor(\sup_{s\leq t} U_s )_{+}\rfloor + k = k =M_{t}.
\]
\rev{Conversely, for such a $t$, $M_t = M_{\tau_k} = k = Z_{\tau_k} \leq \sup_{s\leq t} \lfloor (Z_s)_+\rfloor = \lfloor (\sup_{s\leq t}Z_s)_+\rfloor$}
so that the second equality in \eqref{Z limit equation} holds.  
The point is that any solution $(Z_t)_{t\geq0}$ to \eqref{Z limit equation} now has continuous paths, and moreover a unique solution to  \eqref{Z limit equation} will yield a unique solution to the original limit equation \eqref{limit equation} (one can recover a solution to \eqref{limit equation} by setting $U_t = Z_t - M_t$).

As is usual in the study of McKean-Vlasov equations, to prove the existence and uniqueness of a solution to \eqref{Z limit equation} we look for a fixed point of an associated map.  

\rev{To this end, let $T>0$ be arbitrary and for a function $h\in\mathcal{C}^1([0, T])$ set
\begin{equation}
\label{fh}
f_h(t) := H(t)  +\int_0^t   {G}(t -s)h(s)ds, \qquad t\geq0,
\end{equation}
(which is continuous and twice differentiable by Assumptions \ref{assumptions}). Define the process $(Z^h_t)_{t\in[0,\tau^h_1\wedge T]}$ by
\[
Z^h_t = U_0 + f_h(t)  + \int_0^t b(Z^h_s)ds + \int_0^t\sigma(Z^h_s)dW_s, \quad t\in [0, \tau^h_1\wedge T],
\]
where $\tau_1^h := \inf\{t\in[0, T]: Z_t^h \geq 1\}$ ($\inf\{\emptyset\} = \infty$).  This process is well-defined by standard results, since the coefficients are Lipschitz, and $\tau_1^h >0$ since $U_0<1$.  For any $k\geq1$, if $\tau^h_k <T$, by recurrence define $(Z^h_t)_{t\in(\tau^h_k,\tau^h_{k+1}\wedge T]}$ by 
\[
Z^h_t =  Z^h_{\tau^h_k}+ f_h(t)  + \int_{\tau^h_k}^t b(Z^h_s - k)ds + \int_{\tau^h_k}^t\sigma(Z^h_s -k)dW_s, \quad t\in(\tau^h_k,\tau^h_{k+1}\wedge T],
\]
where $\tau_{k+1}^h := \inf\{t\in[0, T]: Z_t^h \geq k+1\}$ ($\inf\{\emptyset\} = \infty$), which is again well-defined by classical results. In this way we can well-define the process $(Z^h_t)_{t\in[0,\tau^h_\infty\wedge T]}$ where $\tau^h_\infty = \lim_{k\to\infty}\tau^h_k$, which satisfies the equation 
\begin{equation}
\label{Zh}
Z^h_t = U_0 + f_h(t)  + \int_0^t b(Z^h_s - M^h_s)ds + \int_0^t\sigma(Z^h_s - M^h_s)dW_s,
\end{equation}
where $M^h_t := \lfloor(\sup_{s\leq t} Z^h_s )_{+}\rfloor$. 

Note that by definition $\tau^h_\infty$ is the first explosion time of this process.  We now show that $\tau^h_\infty =\infty$ almost surely. Indeed, by the linear growth condition on $b$ and the fact that $|U_0| \leq R$ a.s. (see Assumptions \ref{assumptions}), we have that for all $t\in[0, \tau^h_\infty\wedge T]$
\begin{align}
\label{Lipschitz calc}
\sup_{s\leq t} |Z^h_s| \leq R + \|f_h\|_{\infty,t} + \Lambda_b t +2\Lambda_b\int_0^t\sup_{r\leq s}|Z^h_r|ds + \sup_{s\leq t}\left|\int_0^s \sigma(Z^h_r - M^h_r)dW_r\right|,
\end{align}
where we have also used the fact that $M^h_t \leq \sup_{s\leq t} |Z^h_s|$. 
By the Burkh\"older-Davis-Gundy inequality, the boundedness of $\sigma$ and Gronwall's lemma this yields the fact that $\E(\sup_{s\leq T\wedge\tau^h_\infty} |Z^h_s|)$ is finite. In particular, this implies that $\tau^h_\infty = \infty$ almost surely (if $\tau^h_\infty\leq T$ we would have a contradiction). We can thus conclude that $(Z^h_t)_{t\in[0,T]}$ is well-defined.
Finally we set
\begin{equation}
\label{Phi}
\Phi(h)(t) := \E(M_t^h), \quad t\in [0, T],
\end{equation}
which is well-defined by the above arguments (using again the fact that $M^h_t \leq \sup_{s\leq t} |Z^h_s|$).
}


\subsection{The fixed points of $\Phi$ correspond to the solutions of \eqref{Z limit equation}}
In this subsection, we prove that finding a unique solution to \eqref{Z limit equation} is equivalent to finding a unique fixed point of $\Phi$.  This is the object of the following lemma.  It should be noted that in the proof of this lemma we use the assumption on the initial condition $U_0$ in Assumptions \ref{assumptions}.

\begin{lem}
\label{solution fixed point}
Suppose $h\in\mathcal{C}^1([0, T])$ is a fixed point of $\Phi$.  Then a solution $(Z^h_t)_{t\in[0, T]}$ to \eqref{Zh} is a solution to the nonlinear equation \eqref{Z limit equation}.  Conversely, if $(Z_t)_{t\in[0, T]}$ is a solution to the nonlinear equation \eqref{Z limit equation}, then $h(t) = \E(M_t)$, $t\in[0, T]$ is a fixed point of $\Phi$ in $\mathcal{C}^1([0, T])$.
\end{lem}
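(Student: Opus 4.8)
The plan is to establish the two implications separately; the forward one is essentially a substitution, and the converse splits into a uniqueness argument (for the identity $\Phi(h)=h$) and a regularity argument (for $h\in\mathcal{C}^1([0,T])$). For the forward direction, suppose $h\in\mathcal{C}^1([0,T])$ satisfies $\Phi(h)=h$, so that by \eqref{Phi} we have $h(s)=\E(M^h_s)$ for all $s\in[0,T]$. Inserting this into the definition \eqref{fh} of $f_h$ and then into \eqref{Zh}, the deterministic part $U_0+f_h(t)$ becomes $U_0+H(t)+\int_0^t G(t-s)\E(M^h_s)\,ds$, so $(Z^h_t)_{t\in[0,T]}$ satisfies the first line of \eqref{Z limit equation} with $(M_t,\tau_k)$ replaced by $(M^h_t,\tau^h_k)$. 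Since in the construction of $Z^h$ one has $M^h_t=\lfloor(\sup_{s\le t}Z^h_s)_+\rfloor$ and $\tau^h_k=\inf\{t\ge0:Z^h_t\ge k\}$, the remaining two lines of \eqref{Z limit equation} hold automatically (and $Z^h_t-M^h_t<1$, exactly as in the discussion preceding the lemma), so $(Z^h_t)_{t\in[0,T]}$ solves \eqref{Z limit equation}. This step is routine.

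For the converse, let $(Z_t)_{t\in[0,T]}$ solve \eqref{Z limit equation} and set $h(t):=\E(M_t)$. Because $h(s)=\E(M_s)$, we have $U_0+H(t)+\int_0^t G(t-s)\E(M_s)\,ds=U_0+f_h(t)$, so $(Z_t)_{t\in[0,T]}$ is itself a solution of the SDE \eqref{Zh} associated with this particular $h$. The recursive construction carried out before the statement of the lemma yields pathwise uniqueness of \eqref{Zh}: on each interval between two consecutive hitting times the equation reduces to a standard SDE with Lipschitz coefficients, and each hitting time is a functional of the solution on the previous interval, so any two solutions must coincide. Hence $Z_t=Z^h_t$ for all $t\in[0,T]$ a.s., whence $M_t=M^h_t$ a.s. and $\Phi(h)(t)=\E(M^h_t)=\E(M_t)=h(t)$, i.e. $h$ is a fixed point. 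It remains to check that $h\in\mathcal{C}^1([0,T])$, which I expect to be the only substantial point and is the only place where assumption \eqref{initial condition} is used.

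For the regularity I would write $h(t)=\E(M_t)=\sum_{k\ge1}\mathbb{P}(\tau_k\le t)$ and reduce the claim to showing that each $\tau_k$ admits a density $p_k$ on $(0,T]$, that $\sum_{k\ge1}p_k$ converges uniformly on $[0,T]$, and that the resulting sum is continuous; then $h(t)=\int_0^t\big(\sum_k p_k(s)\big)\,ds$ has a continuous integrand, so $h\in\mathcal{C}^1([0,T])$ with $h'=\sum_k p_k$. For $k=1$, conditionally on $U_0$ the process $(Z_t)_{t\le\tau_1}$ is a diffusion with $\mathcal{C}^1$ drift $b$, bounded elliptic coefficient $\sigma$ and a smooth deterministic shift $f_h$; a Lamperti-type transform followed by a Girsanov change of measure reduces its first hitting time of level $1$ to a first-passage time of a Brownian motion with a deterministic shift, and assumption \eqref{initial condition} is precisely what guarantees that the corresponding density stays bounded as $t\to0$ — the quantitative first-passage density estimates for diffusions with drift $b$ needed here are those collected in the Appendix. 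For $k\ge2$, conditioning on $\mathcal{F}_{\tau_{k-1}}$ and using $Z_{\tau_{k-1}}=k-1$ expresses $p_k$ as a convolution of the law of $\tau_{k-1}$ with the first-passage density from level $k-1$ to level $k$, which by the same estimates is controlled uniformly in $k$ on $[0,T]$ (the coefficients being bounded); combined with an exponential tail bound on $\tau_k$ — it is exponentially unlikely to have recorded $k$ spikes by time $T$ — this gives the uniform summability and continuity of $\sum_k p_k$. The main obstacle is precisely obtaining these uniform-in-$k$ first-passage density bounds and the tail estimate for $\tau_k$ under the general drift $b$ permitted by Assumptions \ref{assumptions}; this is the reason such estimates are isolated in the Appendix.
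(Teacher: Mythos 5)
Your proposal matches the paper's argument closely: the forward direction is the same substitution, the identity $\Phi(h)=h$ is obtained via pathwise uniqueness just as in the paper, and the regularity of $h$ is handled using the strong Markov decomposition of $\E(M_t)=\sum_k\mathbb{P}(\tau_k\le t)$ together with the Lamperti/Girsanov first--passage density estimates of the Appendix, with assumption \eqref{initial condition} controlling the $k=1$ term. The paper organizes the $k\ge2$ part slightly differently --- it writes (cf.\ \eqref{markov prop}) $\sum_{k\ge2}\int_0^t\mathbb{P}_0(\tau_1^{h,\sharp s}\le t-s)\,\mathbb{P}(\tau^h_{k-1}\in ds)$ and absorbs the sum into the finite measure $d\E(M^h_s)$ on $[0,T]$, avoiding any explicit tail estimate on $\tau_k$; your route of proving uniform convergence of $\sum_k p_k$ is equivalent but requires you to actually supply the summability, which does follow by iterating the kernel bound $p^0_{f_h^{\sharp s}}(t-s)\le C_T$.

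One small imprecision worth flagging: you justify the ``uniform in $k$'' control of the step density from level $k-1$ to $k$ by ``the coefficients being bounded'', but $b$ has only linear growth. What actually makes those densities identical for every $k$ is that the drift acts on $Z_s - M_s$: after $\tau_{k-1}$, the re-centered process $Z_{\tau_{k-1}+\cdot}-(k-1)$ solves the same SDE started from $0$ (with the time shift $f_h^{\sharp s}$), so the one--step density is always $p^0_{f_h^{\sharp s}}(\cdot)$ regardless of $k$. That is precisely what the paper's representation \eqref{markov prop} encodes, and it is the reason the Appendix bound for $x=0$ suffices; relying on boundedness of $b$ would not be correct.
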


Before giving the proof of this result, we first establish a useful representation of $\Phi$ in terms of the cumulative distribution functions of first hitting times.  Indeed, note that we can write
\[
\Phi(h)(t) = \sum_{k\geq1} \mathbb{P}(\tau^h_k \leq t), \quad t\in[0, T].
\]  
We use the notation $\mathbb{P}_0$ to indicate the conditional probability given the process starts at $0$, and define 
\begin{equation}
\label{fshift}
f^{\sharp s}_h(r):=  f_h(r+s) - f_h(s), \quad 0\leq s \leq t \leq T,\ r\in [0, t-s],
\end{equation}
where $f_h$ is given by \eqref{fh}. Then, by the strong Markov property, we see that
\begin{align}
\label{markov prop}
\Phi(h)(t) &=  \mathbb{P}(\tau^h_1 \leq t) +  \sum_{k\geq2} \int_0^t\mathbb{P}(\tau^{h}_k \leq t | \tau^{h}_{k-1} \in ds)\mathbb{P}(\tau^h_{k-1}\in ds)\nonumber\\
&=  \mathbb{P}(\tau^h_1 \leq t) +  \sum_{k\geq2} \int_0^t\mathbb{P}_0(\tau^{h,\sharp s}_1 \leq t-s )\mathbb{P}(\tau^h_{k-1}\in ds),
\end{align}
where for $s\leq t$, $\tau^{h,\sharp s}_1$ is the first time the process $(X^{h, \sharp s}_{r})_{r\in [0, t-s]}$ reaches the level $1$, and $(X^{h, \sharp s}_{r})_{r\in [0, t-s]}$ is given by
\begin{equation}
\label{shifted process}
X^{h, \sharp s}_{r} = X^{h, \sharp s}_{0} + f^{\sharp s}_h(r) + \int_0^rb(X^{h, \sharp s}_{u})du + \int_0^r\sigma(X^{h, \sharp s}_{u})dW_u,
\end{equation}
for $r\in[0, t-s]$.

The usefulness of the representation \eqref{markov prop} is that we have some nice bounds on the densities of the hitting times of the threshold $1$ for general non-homogeneous processes of the form \eqref{shifted process}, which are detailed in the Appendix.  It should be noted that the bounds in the Appendix are obtained using purely probabilistic arguments (in contrast to the related small time bounds used in \cite{DIRT} and proved in \cite{DIRT3}).

\begin{proof}[Proof of Lemma \ref{solution fixed point}]
The first claim is straightforward.  For the second suppose that  $(Z_t)_{t\in[0, T]}$ is a solution to \eqref{Z limit equation}, and let $h(t) = \E(M_t)$. Then, by a standard pathwise uniqueness argument, $(Z_t)_{t\in[0, T]} = (Z^h_t)_{t\in[0, T]}$, so that
\[
\Phi(h)(t) = \E(M^h_t) = \E(M_t) = h(t), \quad t\in[0, T].
\]
To complete the proof, we have to check that $h\in\mathcal{C}^1([0, T])$, i.e. that $h$ is in the domain of $\Phi$.  To this end, first note that since $\Phi(h) = h$, it suffices to show that $\Phi(h)$ is continuously differentiable.  By the representation \eqref{markov prop}, $\Phi(h)$ is clearly differentiable if both $\mathbb{P}(\tau^h_1 \leq t)$ and $\mathbb{P}_0(\tau^{h,\sharp s}_1 \leq t-s)$ are continuously differentiable in $t$.  For the latter, $\tau^{h,\sharp s}_1$ is the first time that $X^{h, \sharp s}$ given by \eqref{shifted process} and started at $0$ reaches $1$. In addition  the non-homogeneous function $f^{\sharp s}_h$ driving $X^{h, \sharp s}$ is $\mathcal{C}^2$ by definition \eqref{fh}, since
\begin{equation}
\label{fC2}
\frac{d^2}{dr^2}f^{\sharp s}_h(r) = f''_h(r+s) = H''(r+s) + \int_0^{r+s}\frac{d^2}{dr^2}G(r+s-u)h(u)du, \quad r\in[0, t-s],
\end{equation}
where we have used the fact that $G(0) = G'(0) = 0$.  This is finite for all $r\in[t-s]$ since $h$ is non-decreasing (and measurable), together with the assumptions stated in Assumptions \ref{assumptions} on $H$ and $G$.  Thus the equation for $X^{h, \sharp s}$ is of the form \eqref{SDE} studied in the Appendix, so that we can apply Proposition \ref{Pauwels} to see that 
\[
[d/dt]\mathbb{P}_0(\tau^{h,\sharp s}_1 \leq t-s) = p^0_{f_h^{\sharp s}}(t-s),
\] 
using the notation of the Appendix, which exists, is continuous in $t$, and is bounded thanks to Proposition \ref{density bound absolute}.

For the former term, note that
\[
\mathbb{P}(\tau^h_1 \leq t) 
= \int_{-\infty}^1\mathbb{P}_x(\tau^{h, \sharp 0}_1 \leq t)\mathbb{P}(U_0\in dx).
\]
By Proposition \ref{density bound absolute} of the Appendix (which is again applicable since  the non-homogeneous function $f^{\sharp 0}_h$ driving $X^{h, \sharp 0}$ is $\mathcal{C}^2$ exactly as above) we deduce that there exists a constant $C_T$ (depending on the bounds stated in Assumptions \ref{assumptions}, and in particular on $R$) such that
\begin{align*}
\frac{d}{dt}\mathbb{P}(\tau^h_1 \leq t) &\leq C_T
\int_{-\infty}^1(1-x)t^{-\frac{3}{2}}e^{-\frac{(1-x)^2}{\Lambda_\sigma t}}\mathbb{P}(U_0\in dx).
\end{align*}
By condition \eqref{initial condition} of Assumptions \ref{assumptions}, this is uniformly bounded for $t\in (0,T]$.
\end{proof}

It is worth noting that exactly the same argument used at the end of the above proof yields the following corollary.
\begin{cor}
\label{diffble}
The map $\Phi$ is such that $\Phi:\mathcal{C}^1([0, T])\to\mathcal{C}^1([0, T])$.
\end{cor}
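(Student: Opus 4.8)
The plan is to observe that the proof of Lemma~\ref{solution fixed point} already contains, essentially verbatim, all the ingredients needed. The statement $\Phi:\mathcal{C}^1([0,T])\to\mathcal{C}^1([0,T])$ amounts to two things: first, that $\Phi$ is well-defined on all of $\mathcal{C}^1([0,T])$ (not just at fixed points), which is precisely the content of the construction of $(Z^h_t)_{t\in[0,T]}$ and the definition \eqref{Phi} carried out above for an arbitrary $h\in\mathcal{C}^1([0,T])$; and second, that the output $\Phi(h)$ is continuously differentiable. So the only thing left to do is to redo the differentiability argument from the end of the proof of Lemma~\ref{solution fixed point} with a \emph{general} $h\in\mathcal{C}^1([0,T])$ in place of a fixed point.

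First I would write $\Phi(h)(t) = \sum_{k\geq1}\mathbb{P}(\tau^h_k\leq t)$ and invoke the strong Markov representation \eqref{markov prop}, exactly as before; this is valid for any $h\in\mathcal{C}^1([0,T])$ since the construction of $(Z^h_t)_{t\in[0,T]}$ did not use that $h$ is a fixed point. Then I would check that the non-homogeneous driving function $f^{\sharp s}_h$ is $\mathcal{C}^2$ on $[0,t-s]$: by \eqref{fC2}, its second derivative is $H''(r+s)+\int_0^{r+s}G''(r+s-u)h(u)\,du$, which uses only $G(0)=G'(0)=0$, the boundedness of $H''$ and $G''$ from Assumptions~\ref{assumptions}, and the local boundedness of $h$ — all of which hold for any $h\in\mathcal{C}^1([0,T])$ (indeed even for $h$ merely continuous). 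Hence the shifted process $X^{h,\sharp s}$ is of the form \eqref{SDE} of the Appendix, and Proposition~\ref{Pauwels} gives that $t\mapsto \mathbb{P}_0(\tau^{h,\sharp s}_1\leq t-s)$ is $\mathcal{C}^1$ with derivative $p^0_{f_h^{\sharp s}}(t-s)$, continuous in $t$ and bounded by Proposition~\ref{density bound absolute}. For the first term $\mathbb{P}(\tau^h_1\leq t) = \int_{-\infty}^1\mathbb{P}_x(\tau^{h,\sharp 0}_1\leq t)\,\mathbb{P}(U_0\in dx)$, the same two propositions together with condition \eqref{initial condition} of Assumptions~\ref{assumptions} give that its $t$-derivative exists, is continuous, and is bounded uniformly on $(0,T]$ by $C_T\int_{-\infty}^1(1-x)t^{-3/2}e^{-(1-x)^2/(\Lambda_\sigma t)}\mathbb{P}(U_0\in dx)<\infty$. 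Differentiating \eqref{markov prop} term by term and justifying the interchange of sum, integral and derivative via these uniform bounds (dominated convergence / Fubini, using that $\sum_k\mathbb{P}(\tau^h_{k-1}\in ds)$ integrates to $\Phi(h)(t)<\infty$) then shows $\Phi(h)\in\mathcal{C}^1([0,T])$.

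There is really no substantial obstacle here — the corollary is essentially a restatement, and the author flags this by saying ``exactly the same argument''. The one point requiring a line of care is the term-by-term differentiation of the series in \eqref{markov prop}: one must note that each summand $\int_0^t\mathbb{P}_0(\tau^{h,\sharp s}_1\leq t-s)\,\mathbb{P}(\tau^h_{k-1}\in ds)$ is differentiable in $t$ with a derivative controlled by the (uniform-in-$s$) density bound from Proposition~\ref{density bound absolute}, so that the differentiated series is dominated by a convergent series and the interchange is legitimate. Modulo that bookkeeping, the proof is a direct transcription of the argument already given for Lemma~\ref{solution fixed point}, now run with an arbitrary $h\in\mathcal{C}^1([0,T])$.
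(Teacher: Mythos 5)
Your proposal is correct and takes essentially the same approach as the paper, whose entire ``proof'' of this corollary is the remark that ``exactly the same argument used at the end of the above proof yields the following corollary''; you have simply spelled out that argument and verified that none of its steps actually used the fixed-point property $\Phi(h)=h$ (only the boundedness/continuity of $h$ on $[0,T]$ and the structural assumptions on $H,G,\sigma,b,U_0$). Your extra care about justifying the term-by-term differentiation of the series in \eqref{markov prop} via the uniform density bound is a detail the paper silently elides, and is welcome.
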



\subsection{Proof of Theorem \ref{thm: existence and uniqueness}: Uniqueness.}
The purpose of this section is to prove the uniqueness statement in Theorem \ref{thm: existence and uniqueness}.
The proof is based on the following key lemma, which holds thanks to the regularizing effect of the kernel $G$. In contrast to many nonlinear problems (see in particular \cite{DIRT}), the presence of $G$ means that we do not need to split the proof of uniqueness into many small time steps.

\begin{lem}
\label{integral lem}
Let $T>0$ and $h, \tilde{h} \in \mathcal{C}^1([0, T])$.  Then there exists a constant $C_T$, depending only on $T$ and $A_T=\max\{\|h\|_{\infty, T}, \|\tilde{h}\|_{\infty, T}, \Phi(\tilde{h})(T)\}$ (as well the bounds stated in Assumptions \ref{assumptions}), such that
\[
\|\Phi(h) - \Phi(\tilde{h})\|_{\mathcal{C}^1([0, T])} \leq C_T\int_0^T\| h - \tilde{h}\|_{ \mathcal{C}^1([0, s])} ds.
\]
In particular, $\Phi: \mathcal{C}^1([0, T]) \to   \mathcal{C}^1([0, T])$ is continuous.
\end{lem}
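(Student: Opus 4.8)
\emph{Proof strategy.} The plan is to reduce the inequality to an estimate on the derivative $\Phi(h)'-\Phi(\tilde h)'$, which I then control by a Gronwall argument based on the renewal structure of $\Phi$ already visible in \eqref{markov prop}. For the reduction, note that $\Phi(h),\Phi(\tilde h)\in\mathcal C^1([0,T])$ by Corollary \ref{diffble}, while $\Phi(h)(0)=\E(\lfloor(U_0+H(0))_+\rfloor)$ is independent of $h$; writing $P^h:=\Phi(h)'$ and $P^{\tilde h}:=\Phi(\tilde h)'$ we therefore have $\Phi(h)(t)-\Phi(\tilde h)(t)=\int_0^t(P^h-P^{\tilde h})(s)\,ds$, hence $\|\Phi(h)-\Phi(\tilde h)\|_{\mathcal C^1([0,T])}\le(1+T)\|P^h-P^{\tilde h}\|_{\infty,T}$, and it suffices to bound the latter. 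As a preliminary, differentiating \eqref{fh} and using $G(0)=G'(0)=0$ gives $f_h^{(j)}-f_{\tilde h}^{(j)}=\int_0^{\cdot}G^{(j)}(\cdot-s)(h-\tilde h)(s)\,ds$ for $j\in\{0,1,2\}$, so that, together with the shift \eqref{fshift},
\[
\sup_{0\le s\le t}\big\|f_h^{\sharp s}-f_{\tilde h}^{\sharp s}\big\|_{\mathcal C^2([0,t-s])}\ \le\ C_G\int_0^t\|h-\tilde h\|_{\infty,s}\,ds .
\]

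Next I would set up the renewal equation. From \eqref{markov prop}, together with the facts established in the proof of Lemma \ref{solution fixed point} (the first-passage densities $q^h_{U_0}(t):=\tfrac{d}{dt}\mathbb P(\tau^h_1\le t)$ and $q^h_s(r):=p^0_{f_h^{\sharp s}}(r)$ of the process \eqref{shifted process} exist and are continuous, and $P^h=\sum_{k\ge1}\tfrac{d}{dt}\mathbb P(\tau^h_k\le\cdot)$ with each $\mathbb P(\tau^h_k\in ds)$ absolutely continuous), one obtains
\[
P^h(t)=q^h_{U_0}(t)+\int_0^t q^h_s(t-s)\,P^h(s)\,ds,\qquad t\in[0,T],
\]
and likewise for $\tilde h$. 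Subtracting, and splitting the difference of the convolution terms as $(q^h_s-q^{\tilde h}_s)P^{\tilde h}+q^h_s(P^h-P^{\tilde h})$, yields
\[
P^h(t)-P^{\tilde h}(t)=\big(q^h_{U_0}-q^{\tilde h}_{U_0}\big)(t)+\int_0^t\big(q^h_s-q^{\tilde h}_s\big)(t-s)\,P^{\tilde h}(s)\,ds+\int_0^t q^h_s(t-s)\big(P^h-P^{\tilde h}\big)(s)\,ds .
\]

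The key analytic input is a Lipschitz-type stability estimate for the first-passage density in the driving function: for processes of the form \eqref{shifted process} started from $x<1$,
\[
\big|p^x_f(r)-p^x_{\tilde f}(r)\big|\ \le\ \kappa_x(r)\,\|f-\tilde f\|_{\mathcal C^2([0,r])},
\]
where $\kappa_x$ has the same shape as the bound of Proposition \ref{density bound absolute}, so that $\kappa_0$ is bounded while $x\mapsto\kappa_x(t)$ is $\mathbb P(U_0\in dx)$-integrable uniformly in $t\in(0,T]$ thanks to \eqref{initial condition}; I would obtain this either from the Appendix arguments or from a Cameron--Martin change of measure removing the drift difference $f-\tilde f$ combined with Proposition \ref{density bound absolute}. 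Feeding this into the previous identity, and using that $\|h\|_{\infty,T},\|\tilde h\|_{\infty,T}\le A_T$ controls the constants in the density estimates, that $\int_0^t P^{\tilde h}(s)\,ds=\Phi(\tilde h)(t)\le A_T$, and that $q^h_s(t-s)=p^0_{f_h^{\sharp s}}(t-s)\le K$ uniformly in $0\le s\le t\le T$ (the shifted process starts at distance $1$ from the threshold), one arrives at
\[
\big|P^h(t)-P^{\tilde h}(t)\big|\ \le\ C_{A_T,T}\int_0^t\|h-\tilde h\|_{\infty,s}\,ds+K\int_0^t\big|P^h(s)-P^{\tilde h}(s)\big|\,ds .
\]
Since $t\mapsto\int_0^t\|h-\tilde h\|_{\infty,s}\,ds$ is nondecreasing, Gronwall's lemma and $\|h-\tilde h\|_{\infty,s}\le\|h-\tilde h\|_{\mathcal C^1([0,s])}$ give $\|P^h-P^{\tilde h}\|_{\infty,T}\le C_T\int_0^T\|h-\tilde h\|_{\mathcal C^1([0,s])}\,ds$, and the reduction above yields the claimed inequality; the continuity of $\Phi$ on $\mathcal C^1([0,T])$ follows immediately.

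The main obstacle is the stability estimate above: one must control the first-passage density under $\mathcal C^2$-perturbations of the driving function with a prefactor $\kappa_x$ singular enough near $r=0$ to remain harmless yet still integrable against the initial law under \eqref{initial condition}, and bounded in the shifted-process regime --- precisely where the fine probabilistic estimates behind Proposition \ref{density bound absolute}, or a careful Girsanov computation, are indispensable. A secondary, bookkeeping point is to arrange the subtraction in the renewal equation so that $P^{\tilde h}$, whose integral is the controlled quantity $\Phi(\tilde h)(T)\le A_T$, multiplies the density \emph{difference}, while the merely bounded kernel $q^h$ carries the Gronwall term.
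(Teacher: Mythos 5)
Your proposal follows essentially the same route as the paper's proof: both differentiate the renewal representation \eqref{markov prop} (equivalently \eqref{phi formula}), split the resulting inequality so that $\Phi(\tilde h)'$ — whose integral is $\Phi(\tilde h)(T)\le A_T$ — multiplies the kernel \emph{difference} while the uniformly bounded kernel carries the Gronwall term, invoke the Appendix's first-passage-density bounds (Propositions \ref{density bound absolute} and \ref{density bound diff}, your $\kappa_x$ being exactly the latter), and close with Gronwall and the fact that $\Phi(h)(0)$ is independent of $h$. Your renewal equation $P^h(t)=q^h_{U_0}(t)+\int_0^t q^h_s(t-s)P^h(s)\,ds$ is simply the differentiated form of \eqref{phi formula}, and the Cameron--Martin route you sketch for the stability estimate is precisely what underlies Proposition \ref{density bound diff}.
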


We postpone the proof of Lemma \ref{integral lem} to Section \ref{sec: proof of integral lem}, and here proceed instead to the proof of uniqueness in Theorem \ref{thm: existence and uniqueness}.
As remarked above, it is equivalent to prove the uniqueness of a solution to \eqref{Z limit equation} on any interval $[0, T]$.  To this end, suppose $(Z_t, M_t)_{t\in[0, T]}$ and $(\tilde{Z}_t, \tilde{M}_t)_{t\in[0, T]}$ are two solutions to \eqref{Z limit equation}.  By Lemma \ref{solution fixed point} we have that $h(t)=\E(M_t)$ and $\tilde{h}(t)=\E(\tilde{M}_t)$ are fixed points of $\Phi$ in $\mathcal{C}^1([0, T])$.  Moreover, by that result $(Z, M) = (Z^h, M^h)$, where $(Z^h, M^h)$ is the solution to \eqref{Zh}.  Similarly $(\tilde{Z}, \tilde{M}) = (Z^{\tilde{h}}, M^{\tilde{h}})$.  

We then apply Lemma \ref{integral lem} to $h$ and $\tilde{h}$.  This yields
\[
\|\Phi(h) - \Phi(\tilde{h})\|_{\mathcal{C}^1([0, T])} \leq C_T\int_0^T\|h - \tilde{h}\|_{\mathcal{C}^1([0, r])}dr,
\]
for some constant $C_T$ depending on $T$ and $\max\{\|h\|_{\infty, T}, \|\tilde{h} \|_{\infty, T}, \Phi(\tilde{h})(T)\}$ (as well the bounds stated in Assumptions \ref{assumptions}).  However, since $h$ and $\tilde{h}$ are fixed points of $\Phi$, this yields
\[
\|h - \tilde{h}\|_{\mathcal{C}^1([0, T])} \leq C_T\int_0^T\|h - \tilde{h}\|_{\mathcal{C}^1([0, r])}dr.
\]
By Gronwall's lemma we conclude that $h\equiv \tilde{h}$.  By the pathwise uniqueness of a solution to the SDE \eqref{Zh}, we have $(Z^h, M^h) = (Z^{\tilde{h}}, M^{\tilde{h}})$ almost surely.  Hence $(Z, M) = (\tilde{Z}, \tilde{M})$ almost surely.
\qed

\subsection{Proof of Lemma \ref{integral lem}}
\label{sec: proof of integral lem}
Let $t\in[0, T]$ and  $h, \tilde{h} \in \mathcal{C}^1([0, T])$.  Then, using the representation \eqref{markov prop}, we can see that
\begin{equation}
\label{phi formula}
\Phi(h)(t) = \mathbb{P}(\tau^h_1 \leq t) +  \int_0^t\mathbb{P}_0(\tau^{h,\sharp s}_1 \leq t-s)\Phi(h)'(s)ds,
\end{equation}
where we recall that $\tau^{h,\sharp s}_1$ is the first time $X^{h, \sharp s}$ reaches the level $1$, and $X^{h, \sharp s}$ is given by \eqref{shifted process}. We note that \eqref{phi formula} is valid since $\Phi(h)$ is continuously differentiable by Corollary \ref{diffble}.  By differentiating \eqref{phi formula} with respect to $t$, we see that
\begin{align}
\label{main}
|\Phi(h)'(t) - \Phi(\tilde{h})'(t)| &\leq \left|\frac{d}{dt}\mathbb{P}(\tau^h_1 \leq t) - \frac{d}{dt}\mathbb{P}(\tau^{\tilde{h}}_1 \leq t) \right| \nonumber\\
& \quad + \int_0^t\left|\frac{d}{dt}\mathbb{P}_0(\tau^{h,\sharp s}_1 \leq t-s)\right|\left|\Phi(h)'(s) - \Phi(\tilde{h})'(s)\right|ds\nonumber\\
&\quad +  \int_0^t\left|\frac{d}{dt}\mathbb{P}_0(\tau^{h,\sharp s}_1 \leq t-s)-\frac{d}{dt} \mathbb{P}_0(\tau^{\tilde{h},\sharp s}_1 \leq t-s)\right|\Phi(\tilde{h})'(s)ds, 
\end{align}
where we have used the fact that $\Phi(\tilde{h})(s)$ is non-decreasing.  The first thing to note is that by Proposition \ref{density bound absolute} of the Appendix, there exists a constant $C_T$ that only depends on $T, A_T$ and the bounds of Assumptions \ref{assumptions} such that
\begin{equation}
\label{Q abs bound}
\left|\frac{d}{dt}\mathbb{P}_0(\tau^{h,\sharp s}_1 \leq t-s)\right| \leq C_T, \quad \forall\ 0\leq s\leq t \leq T.
\end{equation}
This follows because, exactly as noted in the proof of Lemma \ref{solution fixed point}, $\tau^{h,\sharp s}_1$ is the first time that $X^{h, \sharp s}$ reaches $1$, and the non-homogeneous drift $f^{\sharp s}_h$ driving $X^{h, \sharp s}$ is bounded in the $\mathcal{C}^2([0, T-s])$ norm by a constant depending only on $T$ and the bounds of Assumptions \ref{assumptions}.  This can be seen directly from \eqref{fC2}.
Thus, in the notation introduced in the Appendix,
\[
\frac{d}{dt}\mathbb{P}_0(\tau^{h,\sharp s}_1 \leq t-s) = p^0_{f^{\sharp s}_h}(t-s), \quad \forall\ 0\leq s\leq t \leq T,
\]
which is indeed bounded by Proposition \ref{density bound absolute}.

Using \eqref{Q abs bound} in \eqref{main}
we see that
\begin{align}
\label{main2}
|\Phi(h)'(t) - \Phi(\tilde{h})'(t)| &\leq  \left|\frac{d}{dt}\mathbb{P}(\tau^h_1 \leq t) - \frac{d}{dt}\mathbb{P}(\tau^{\tilde{h}}_1 \leq t) \right| + C_T\int_0^t\left|\Phi(h)'(s) - \Phi(\tilde{h})'(s)\right|ds\nonumber\\
&\quad +  C_T\sup_{s\leq t}\left|\frac{d}{dt}\mathbb{P}_0(\tau^{h,\sharp s}_1 \leq t-s)- \frac{d}{dt}\mathbb{P}_0(\tau^{\tilde{h},\sharp s}_1 \leq t-s)\right|
\end{align}
for some $C_T$ that again depends only on $T, A_T$ and the bounds in Assumptions \ref{assumptions}.  Note that it is at this point that the constant $C_T$ may depend through $A_T$ on $\Phi(\tilde{h})(T)$.  The above dependencies  will be true for all constants $C_T$ below in the proof, though it will be allowed to increase from line to line.
By introducing the notation
\[
D^{h,\tilde{h}}_{s, t}(x) := \left|\frac{d}{dt}\mathbb{P}_x(\tau^{h,\sharp s}_1 \leq t-s)-\frac{d}{dt} \mathbb{P}_x(\tau^{\tilde{h},\sharp s}_1 \leq t-s)\right|,
\]
for $0\leq s\leq t \leq T$ and $x<1$, we continue \eqref{main2} (by conditioning on the initial condition in the first term) as
\begin{align}
\label{main3}
|\Phi(h)'(t) - \Phi(\tilde{h})'(t)| &\leq  \int_{-\infty}^1 D^{h,\tilde{h}}_{0, t}(x)\mathbb{P}(U_0 \in dx) + C_T\int_0^t\left|\Phi(h)'(s) - \Phi(\tilde{h})'(s)\right|ds\nonumber\\
&\qquad \qquad + C_T\sup_{s\leq t}D^{h,\tilde{h}}_{s, t}(0).
\end{align}
The point is that we can use Proposition \ref{density bound diff} of the Appendix (which is applicable for the same reasons discussed above) to deduce that
\begin{align*}
D^{h, \tilde{h}}_{s, t}(x) &\leq C_Te^{C_Tx^2}\|f^{\sharp s}_h - f^{\sharp s}_{\tilde{h}}\|_{\mathcal{C}^2([0, t-s])}\frac{1 -x}{\sqrt{2\pi (t-s)^3}}e^{-\frac{(1-x)^2}{\Lambda_\sigma(t-s)}}\\
&\leq  C_Te^{C_Tx^2}\int_0^t|h(r) - \tilde{h}(r)|dr\frac{1 -x}{\sqrt{2\pi (t-s)^3}}e^{-\frac{(1-x)^2}{\Lambda_\sigma(t-s)}},
\end{align*}
for all $0\leq s\leq t\leq T$ and $x<1$, where we have used the definition of $f^{\sharp s}_h$, $f^{\sharp s}_{\tilde{h}}$ and the boundedness of $G$ to pass from the first to the second line.
Using this in \eqref{main3} then yields (thanks to the assumptions on $U_0$)
\begin{align*}
|\Phi(h)'(t) - \Phi(\tilde{h})'(t)| &\leq C_T\int_0^t\left|\Phi(h)'(s) - \Phi(\tilde{h})'(s)\right|ds+ C_T\int_0^t|h(r) - \tilde{h}(r)|dr,
\end{align*}
for all  $t\in[0, T]$.  An application of Gronwall's lemma yields
\begin{align}
\label{main5}
|\Phi(h)'(t) - \Phi(\tilde{h})'(t)| &\leq C_T\int_0^t|h(r) - \tilde{h}(r)|dr, \quad t\in[0, T].
\end{align}
To complete the proof we finally remark that this implies
\begin{align*}
\|\Phi(h) - \Phi(\tilde{h})\|_{\mathcal{C}^1([0, T])} &\leq \rev{C_T}\|\Phi(h)' - \Phi(\tilde{h})'\|_{\infty, T} \\
& \leq C_T\int_0^T\|h - \tilde{h}\|_{\infty, r}dr \leq C_T\int_0^T\|h - \tilde{h}\|_{\mathcal{C}^1([0, r])}dr,
\end{align*}
where we have used the fact that $\Phi(h)(0)=\Phi(\tilde{h})(0) =0$.
\qed


\subsection{Proof of Theorem \ref{thm: existence and uniqueness}: Existence.}
The purpose of this section is to prove the existence statement in Theorem \ref{thm: existence and uniqueness}.  

\rev{
\begin{rem}
It has been pointed out to us by the referee that in fact this section could be skipped.  This is because in the course of the proof of Theorem \ref{thm: convergence} below (see Section \ref{sec: proof of convergence}) we prove the existence of a solution to the nonlinear martingale problem associated to \eqref{Z limit equation} by studying the convergence of the approximating particle system.  This yields the existence of a weak solution to \eqref{Z limit equation}.  Thanks to the fact that we already have strong uniqueness by the preceding section, the details of the classical argument of Yamada and Watanabe (see \cite[Proposition 5.3.20]{KS})
can be checked to deduce strong existence.

However, we choose to include this section, since the technique is quite simple and is directly related to the nonliner SDE \eqref{Z limit equation}.  In particular it allows us to complete the proof of Theorem \ref{thm: existence and uniqueness} without resorting to the approximating particle system.
 \end{rem}
}

We use an iterative scheme. Indeed, fix $T>0$, define $h_0 \equiv 0$, and let
\begin{equation}
\label{hn}
h_n(t) :=\Phi(h_{n-1})(t), \quad t\in[0,T], \quad n\geq1.
\end{equation}
The idea is to apply Lemma \ref{integral lem} repeatedly to show that $(h_n)_{n\geq1}$ forms a convergent sequence in $\mathcal{C}^1([0, T])$.  In order to do this we need the following stability result.


\begin{lem}
\label{stable subspace lem}
There exists a non-decreasing function $g:[0, \infty) \to [0, \infty)$ such that if $h \in \mathcal{C}^1([0, T])$ is non-decreasing and such that $h(0)=0$ and $h(t) \leq g(t)$ for all $t\in[0, T]$, then $\Phi(h)(t) \leq g(t)$, for all $t\in[0, T]$.
\end{lem}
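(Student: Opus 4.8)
The plan is to avoid the hitting-time density bounds of the Appendix entirely for this lemma, and instead manufacture $g$ from the crude a priori moment estimate \eqref{Lipschitz calc} that was already used to establish well-posedness of $(Z^h_t)_{t\in[0,T]}$. Since $M^h_t=\lfloor(\sup_{s\le t}Z^h_s)_+\rfloor\le\sup_{s\le t}|Z^h_s|$, we have $\Phi(h)(t)=\E(M^h_t)\le\E\big(\sup_{s\le t}|Z^h_s|\big)$, so it suffices to bound the latter and then choose $g$ to absorb the resulting self-consistent inequality.

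The first step is to take expectations in \eqref{Lipschitz calc}, estimate the martingale term by the Burkholder--Davis--Gundy inequality together with $\sigma\le\Lambda_\sigma$ (giving a bound $C_{\mathrm{BDG}}\Lambda_\sigma\sqrt t$ with $C_{\mathrm{BDG}}$ universal), and apply Gronwall's lemma in the form ``$\varphi(t)\le\alpha(t)+\beta\int_0^t\varphi(s)\,ds$ with $\alpha$ non-decreasing implies $\varphi(t)\le\alpha(t)e^{\beta t}$''. This yields, for $t\in[0,T]$,
\[
\Phi(h)(t)\ \le\ \big(R+\|f_h\|_{\infty,t}+\Lambda_b t+C_{\mathrm{BDG}}\Lambda_\sigma\sqrt t\big)e^{2\Lambda_b t}.
\]
Because $h$ is non-decreasing with $h(0)=0$ it is non-negative, so from \eqref{fh} we get $\|f_h\|_{\infty,t}\le\|H\|_\infty+\|G\|_\infty\int_0^t h(s)\,ds$. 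Setting
\[
a(t):=\big(R+\|H\|_\infty+\Lambda_b t+C_{\mathrm{BDG}}\Lambda_\sigma\sqrt t\big)e^{2\Lambda_b t},\qquad c(t):=\|G\|_\infty e^{2\Lambda_b t},
\]
which are continuous, non-negative and non-decreasing on $[0,\infty)$ and depend only on the constants fixed in Assumptions \ref{assumptions} (and the universal $C_{\mathrm{BDG}}$), we arrive at the self-consistent bound $\Phi(h)(t)\le a(t)+c(t)\int_0^t h(s)\,ds$.

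The second step is to define $g$ by closing this inequality. Let $G^\star$ be the unique global solution on $[0,\infty)$ of the linear ODE $(G^\star)'(t)=a(t)+c(t)G^\star(t)$, $G^\star(0)=0$ (explicitly $G^\star(t)=\int_0^t e^{\int_s^t c(r)\,dr}a(s)\,ds$; existence on all of $[0,\infty)$ is immediate since $a,c$ are continuous there), and put $g:=a+c\,G^\star$, so that $g=(G^\star)'$ and $\int_0^t g(s)\,ds=G^\star(t)$. Since $(G^\star)'=g=a+cG^\star\ge0$, the function $G^\star$ is non-decreasing and non-negative; hence $cG^\star$ is non-decreasing and non-negative, and therefore $g=a+cG^\star$ is a non-decreasing map $[0,\infty)\to[0,\infty)$. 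Note $g$ is constructed once and for all, independently of $T$ and of $h$.

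Finally, if $h\in\mathcal{C}^1([0,T])$ is non-decreasing with $h(0)=0$ and $h(t)\le g(t)$ for all $t\in[0,T]$, then $0\le h\le g$ on $[0,T]$ gives $\int_0^t h(s)\,ds\le\int_0^t g(s)\,ds=G^\star(t)$, and the self-consistent bound yields $\Phi(h)(t)\le a(t)+c(t)G^\star(t)=g(t)$ for all $t\in[0,T]$, which is the claim. I do not expect a genuine obstacle here: the only point requiring care is to ensure that every occurrence of $h$ in the a priori bound is funnelled through $\int_0^t h(s)\,ds$ — which is precisely the shape of \eqref{Lipschitz calc} — so that the resulting Volterra problem is linear and solvable on all of $[0,\infty)$, producing the invariant profile $g$.
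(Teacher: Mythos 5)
Your proposal is correct, and it takes a genuinely different route from the paper. The paper estimates $\|f_h\|_{\infty,t}\leq \|H\|_\infty + t\|G\|_\infty h(t)$, which leaves the pointwise value $h(t)$ (rather than its integral) on the right-hand side of the self-consistent bound; to close that inequality the paper must restrict to a window $[0,T_0]$ with $T_0=(2\|G\|_\infty)^{-1}$ so that the coefficient of $g_0(t)$ is $\tfrac12<1$, and then extend to arbitrary $T$ by iterating over successive windows of length $T_0$ using the Markov property and the shifted drift $f_h^{\sharp \bar T}$. You instead keep $h$ under the time integral, $\|f_h\|_{\infty,t}\leq\|H\|_\infty+\|G\|_\infty\int_0^t h(s)\,ds$, so that after Gronwall the bound $\Phi(h)(t)\leq a(t)+c(t)\int_0^t h(s)\,ds$ is a linear Volterra inequality whose resolvent yields an explicit invariant profile $g=(G^\star)'$ on all of $[0,\infty)$ in a single step, with no small-time restriction, no Markov-property shift, and a $g$ that is manifestly independent of $T$ and $h$. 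The only points worth making explicit in a write-up are (i) that $F(t)=\E(\sup_{s\leq t}|Z^h_s|)$ is finite (already established in the construction of $Z^h$ in Section 4), so Gronwall is legitimately applicable, and (ii) the monotonicity of $t\mapsto\|f_h\|_{\infty,t}$ needed for the form of Gronwall you invoke; both are immediate. Your argument is shorter and, I would say, cleaner than the paper's two-step scheme, and delivers a slightly stronger conclusion (a single $g$ valid on all of $[0,\infty)$ at once).
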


We again postpone the proof of this technical lemma to the next section, and first show how it can be used to deduce the existence statement in Theorem \ref{thm: existence and uniqueness}. By definition, we have
\[
h_n = \underbrace{\Phi\circ\dots\circ\Phi}_{n\ \mathrm{times}}(h_0).
\]
In particular, $h_n$ is well-defined in $\mathcal{C}^1([0, T])$ $\forall\ n\geq0$ thanks to Corollary \ref{diffble}. Moreover (by the definition of $\Phi$ in \eqref{Phi}) $h_n$ is non-decreasing and $h_n(0)=0$ for all $n\geq0$.

Let $g:[0, \infty)\to[0,\infty)$ be as in Lemma \ref{stable subspace lem}. Since $h_0(t) \leq g(t)$ $\forall t\in[0, T]$ (recall $h_0\equiv 0$ by definition), by Lemma  \ref{stable subspace lem} it holds that $h_1(t) = \Phi(h_0)(t) \leq g(t)$ $\forall t\in[0, T]$. By repeating this argument, we see that for any $n\geq0$
\[
h_n(t) = \Phi(h_{n-1})(t) \leq g(t),\qquad t\in[0, T],
\]
where we emphasize that the right-hand side is independent of $n$.
Thus by Lemma \ref{integral lem}, we have for any $n\geq 1$
\begin{align}
\label{n=1}
\|h_{n+1} - h_{n}\|_{\mathcal{C}^1([0, T])} 
\leq C_T\int_0^T\|h_{n} - h_{n-1}\|_{\mathcal{C}^1([0, r])}dr,
\end{align}
where the constant $C_T$ is also independent of $n$ (it depends only on $T$ and $g(T)$, as well the on the bounds stated in Assumptions \ref{assumptions}).  By iterating inequality \eqref{n=1} $n$ times, we then see that
\[
\|h_{n+1} - h_{n}\|_{\mathcal{C}^1([0, T])} \leq \frac{C^n_TT^n}{n!}\|h_1\|_{\mathcal{C}^1([0, T])}.
\]
Hence $(h_n)_{n\geq1}$ is a Cauchy sequence in $\mathcal{C}^1([0, T])$.  Therefore it has a limit in $\mathcal{C}^1([0, T])$, which we denote by $h_\infty$.  By Lemma \ref{integral lem}, we see that
\[
\Phi(h_\infty) = \lim_{n\to\infty}\Phi(h_n) = \lim_{n\to\infty}h_{n+1} = h_\infty,
\]
so that $h_\infty$ is a fixed point of $\Phi$.  By Lemma \ref{solution fixed point}, this yields a solution to \eqref{Z limit equation}, and hence to \eqref{limit equation} as required.
\qed

\subsection{Proof of Lemma \ref{stable subspace lem}}
We will in fact show that there exists a non-decreasing function $g:[0, \infty) \to [0, \infty)$ such that if $h \in \mathcal{C}^1([0, T])$ is such that $h(0)=0$ and $h$ is non-decreasing then
\begin{equation}
\label{sup bound}
h(t) \leq g(t), \forall t\in[0, T]  \ \Rightarrow\ \E\Big(\sup_{s\in [0,t]}|Z^h_s|\Big) \leq g(t), \forall t\in[0, T],
\end{equation}
where $(Z^h_t)_{t\in[0, T]}$ is given by \eqref{Zh}.
This clearly implies the result, since $\Phi(h)(t) \leq \E(\sup_{s\in[0, t]}|Z^h_s|)$ for all $t\in[0, T]$. The proof will be in two steps.

\vspace{0.2cm}
\noindent\textit{Step 1: Small time.}
We will first show \eqref{sup bound} holds for $T=T_0$ for some small $T_0$  and $g = g_0$ to be chosen below. 
To this end, 
let $(Z_{t}^{h})_{t\in[0, T_0]}$ be the solution to \eqref{Zh} on $[0, T_0]$, and $F(t) :=  \E (\sup_{s \in[0, t]}|Z_{s}^{h}|)$. By \eqref{Lipschitz calc}, we have that
\begin{align}
\label{F1}
F(t)& \leq R + \|f_h\|_{\infty,t} + \Lambda_b t+ c\Lambda_\sigma\sqrt{t} +  2\Lambda_b\int_0^tF(s)ds, \quad t\in[0, T_0].
\end{align}
By definition of $f_h$ in \eqref{fh},
\begin{align}
 \|f_h\|_{\infty,t} 
&\leq \|H\|_\infty + t\|G\|_\infty h(t) \leq \|H\|_\infty+ \frac{1}{2}h(t), \nonumber
\end{align}
for all $t\in [0, T_0]$, where we have now chosen $T_0 =  (2\|G\|_\infty)^{-1}$. Hence, if we assume that $h(t)\leq g_0(t)$ for $t\in[0, T_0]$, it follows from \eqref{F1} that
\begin{align}
\label{Feq}
F(t)&\leq R +  \Lambda_bt + c\Lambda_\sigma\sqrt{t} + 2\Lambda_b\int_{0}^{t} F(s)ds  + \|H\|_\infty+\frac{1}{2}g_0(t), \quad t\in[0, T_0].
\end{align}

The point is that we have assumed  $T_0$ to be small enough so that the coefficient of $g_0(t)$ in the above is $<1$, which allows us to apply a Gronwall type argument.
Indeed, defining $Q(s) := \exp(-2\Lambda_b s)\int_0^s2\Lambda_b F( r)dr$ for $s\in [0, T_0]$ yields
\begin{align*}
Q'(s) &=\left(F(s) - 2\Lambda_b\int_0^s F( r)dr\right)2\Lambda_b e^{-2\Lambda_b s}\\
&\leq \left(R +  \Lambda_bs + c\Lambda_\sigma\sqrt{s} + \frac{1}{2}g_0( s)+  \|H\|_\infty\right)2\Lambda_b e^{-2\Lambda_b s},
\end{align*}
by \eqref{Feq}.  This implies
\begin{align}
\label{Feq2}
2\Lambda_b\int_0^t F( s)ds &\leq e^{2\Lambda_b t}\int_0^t\left(R +  \Lambda_bs + c\Lambda_\sigma\sqrt{s} + \frac{1}{2}g_0( s)+  \|H\|_\infty\right)2\Lambda_b e^{-2\Lambda_b s}ds\nonumber\\
&\leq \left(R +  \Lambda_bt + c\Lambda_\sigma\sqrt{t} +  \|H\|_\infty\right)(e^{2\Lambda_b t} - 1) + \frac{1}{2}e^{2\Lambda_b t}\int_0^tg_0( s)2\Lambda_b e^{-2\Lambda_b s}ds.
\end{align}
Define now $g_0$ by
\begin{equation}
\label{definition g_0}
g_0(t) := 2(R + \Lambda_bt+ c\Lambda_\sigma\sqrt{t} + \|H\|_\infty)e^{4\Lambda_b t}, \qquad t\in[0, T_0].
\end{equation}
Then, we have that
\begin{align*}
\frac{1}{2}e^{2\Lambda_b t}\int_0^tg_0( s)2\Lambda_b e^{-2\Lambda_b s}ds 
&\leq (R + \Lambda_bt + c\Lambda_\sigma\sqrt{t} + \|H\|_\infty)\left(e^{4\Lambda_b t} - e^{2\Lambda_b t}\right),
\end{align*}
so that by \eqref{Feq2}
\begin{align*}
2\Lambda_b\int_0^t F( s)ds &\leq  (R + \Lambda_bt + c\Lambda_\sigma\sqrt{t} +  \|H\|_\infty)\left(e^{4\Lambda_b t} - 1\right).
\end{align*}
Finally, using this in \eqref{Feq}, we have thus shown that 
\begin{align*}
F(t)&\leq (R +\Lambda_bt + c\Lambda_\sigma\sqrt{t} + \|H\|_\infty)e^{4\Lambda_b t}  + \frac{1}{2}g_0( t) = g_0(t), \quad t\in[0, T_0],
\end{align*}
where $T_0 = (2\|G\|_\infty)^{-1}$ and $g_0$ is given by \eqref{definition g_0}.  In other words \eqref{sup bound} holds on $[0, T_0]$ for the stated $T_0$ and $g_0$.

\vspace{0.2cm}
\noindent\textit{Step 2: Long time.}
Suppose now that \eqref{sup bound} holds on some interval $[0, \bar{T}]$ with $g=\bar{g}$ for some function $\bar{g}:[0, \bar{T}] \to [0, \infty)$.  We aim to show that it holds on $[0, \bar{T} + T_0]$, where $T_0$ is as in Step 1. 

To this end, suppose $h\in \mathcal{C}^1([0, \bar{T} + T_0])$ is non-decreasing and such that $h(0)=0$ and $h(t)\leq g(t)$ for $t\in[0, \bar{T} + T_0]$, where $g(t) = \bar{g}(t)$ for $t\leq \bar{T}$ and $g$ will be defined for $t\in(\bar{T}, \bar{T} + T_0]$ below.
Define $\hat{Z}^{h}_s := Z^{h}_{s+\bar{T}}$ for all $s\in [0, T_0]$.  Then, by the Markov property, the dynamics of $\hat{Z}^{h}$ are given by
\[
\hat{Z}^{h}_s  = Z^{h}_{\bar{T}}+ f^{\sharp{\bar{T}}}_{h}(s) + \int_{0}^{s} b(\hat{Z}^{h}_r -\hat{M}^{h}_r)dr + \int_0^s \sigma(\hat{Z}^{h}_r -\hat{M}^{h}_r) d\hat{W}_r ,
\]
where $(\hat{W}_s)_{s\geq0}$ is a Brownian motion, $\hat{M}^{h}_s = \sum_{k\geq 1}\Ind_{[0, s]}(\hat{\tau}^{h}_k)$,
and $\hat{\tau}^{h}_k = \inf\{s\geq0: \hat{Z}^{h}_{t}\geq k\}$, $k\geq 1$ with $\hat{\tau}^{h}_0 = 0$.
Here, as in \eqref{fshift}, 
\[
f_h^{\sharp \bar{T}}(s) := f_h(s+ \bar{T} ) - f_h(\bar{T}), \qquad s\in [0, T_0].
\]

Now, proceeding as in Step 1, 
if we define $\hat{F}(s) := \E( \sup_{r \in[0,s]}|\hat{Z}_{r}^{h}| )$, by the Burkh\"older-Davis-Gundy inequality,
\begin{equation*}
\hat{F}(s) \leq \E|Z_{\bar{T}}^h| +\|f_h^{\sharp \bar{T}}\|_{\infty, s}  + \Lambda_bs  + c\Lambda_\sigma\sqrt{s} + 2\Lambda_b \int_{0}^{s}\hat{F}(u)  du.
\end{equation*}
Moreover, again by \eqref{fh}, using the facts that $h$ is non-decreasing, $h(0) =0$ and $G(0) =0$, we have for $s\in [0, T_0]$
\begin{align*}
\|f_h^{\sharp \bar{T}}\|_{\infty, s} 
&\leq 2\|H\|_\infty + \sup_{r\in[0,s]}\left|\int_{\bar{T}}^{r + \bar{T}}\frac{d}{du}\left[\int_0^uG(u - v)h(v)dv\right]du\right| \\
&\leq 2\|H\|_\infty + \int_{\bar{T}}^{s + \bar{T}}\int_0^u|G(u - v)|h'(v)dvdu \\
&\leq 2\|H\|_\infty + s\|G\|_\infty h(r+\bar{T}) \leq 2\|H\|_\infty + \frac{1}{2} h(s+\bar{T}),
\end{align*}
for all $s\in[0, T_0]$, recalling that $T_0 = (2\|G\|_{\infty})^{-1}$. Since we have assumed that $h(t)\leq g(t)$ for $t\in[0, \bar{T} + T_0]$, we then have
\begin{equation*}
\hat{F}(s) \leq \E|Z_{\bar{T}}^h| + \Lambda_bs  + c\Lambda_\sigma\sqrt{s} + 2\Lambda_b \int_{0}^{s}\hat{F}(u)  du + 2\|H\|_\infty + \frac{1}{2}g(s+\bar{T}) ,
\end{equation*}
for $s\in [0, T_0]$.  Now, by assumption, \eqref{sup bound} is true on $[0, \bar{T}]$ with $g=\bar{g}$, and $h(t)\leq \bar{g}(t)$ on $[0, \bar{T}]$.  Thus  $\E|Z_{\bar{T}}^h| \leq \bar{g}(\bar{T})$, so that 
\begin{equation*}
\hat{F}(s) \leq  \bar{g}(\bar{T})+ \Lambda_bs  + c\Lambda_\sigma\sqrt{s} + 2\Lambda_b \int_{0}^{s}\hat{F}(u)  du + 2\|H\|_\infty + \frac{1}{2}g(s+\bar{T}),
\end{equation*}
for $s\in [0, T_0]$.
Arguing in exactly the same way as at the end of Step 1, we can then deduce that if 
\begin{equation}
\label{definition g lt}
g(t) := 2\big(\bar{g}(\bar{T}) + \Lambda_b(t - \bar{T}) + c\Lambda_\sigma\sqrt{t - \bar{T}} + 2\|H\|_\infty \big)e^{4\Lambda_b(t-\bar{T})},
\end{equation}
for $t\in (\bar{T}, \bar{T}+T_0]$, then 
\begin{equation*}
\hat{F}(s) \leq g(s+\bar{T}),
\end{equation*}
for $s\in [0, T_0]$.  Thus, we have found $g$ (given by $\bar{g}$ on $[0, \bar{T}]$ and \eqref{definition g lt} on  $(\bar{T}, \bar{T}+T_0]$) such that \eqref{sup bound} holds on $[0, \bar{T} + T_0]$.
The claim \eqref{sup bound} then clearly holds on any interval, since we can iterate this second step.
\qed


\section{Proof of Theorem \ref{thm: convergence}: Convergence of the particle system}
\label{sec:convergence}

The purpose of this section is to complete our study, and prove the second main result, namely Theorem \ref{thm: convergence}.
In a similar way to Section \ref{sec: e+u}, it is convenient to reformulate the particle system \eqref{particle system} in terms of the processes $(Z^i_t)_{t\geq0} := (U^i_t + M^i_t)_{t\geq 0}$ for $i\in\{1, \dots, N\}$.  This is because for each $i\in\{1, \dots, N\}$, the dynamics of $Z^i$ are given by
\begin{equation}
\label{Z particle}
\begin{cases}
Z_t^i=U^i_0 + H(t)  + \int_0^t b(Z_s^i - M_s^i)ds +\sum_{j=1}^N\frac{J_{ij}}{S^N_i}\int_0^t   {G}(t -s)M_s^j ds + \int_0^t\sigma(Z_s^i - M_s^i)dW_s^i,\\
M_t^i =\lfloor( \sup_{s\in [0, t]}Z^{i}_s)_+ \rfloor,\\
\tau_k^i= \inf\{t\geq0:  Z_{t}^i \geq k\},\ k\in\mathbb{N}\backslash\{0\},\ \tau^i_0=0,
\end{cases}
\end{equation}
where as above $\lfloor x \rfloor$ and $(x)_+$ indicate the integer part of $x$ and $\max\{x, 0\}$ respectively, for any $x\in\R$. This reformulation is seen in exactly the same way as the reformulation of the limit equation in \eqref{Z limit equation}. As there, again the point is that each $Z^i$ has a continuous path, but the system is equivalent to the original one \eqref{particle system}.  This will eliminate the need to work with Skorohod topologies (at least until the very end of the proof).

\subsection{Notation}
Fix $T>0$ and let $(Z_t^i)_{t\in[0, T], i\in\{1, \dots, N\}}$ be the solution to the system \eqref{Z particle}.  Define the weighted empirical measure
\begin{equation}
\label{mu bar}
\bar{\mu}^N_i := \frac{1}{S^N_i}\sum_{j=1}^NJ_{ij}\delta_{Z^{j}},
\end{equation}
which is a random probability measure on $\mathcal{C}([0, T])$ i.e. $\bar{\mu}^N_i \in \mathcal{P}\left(\mathcal{C}([0, T])\right)$ for every $i\in\{1, \dots, N\}$.  We also define
\begin{equation}
\label{Pi def}
\Pi^N_i := \mathrm{Law}(\bar{\mu}^N_i), \quad i\in\{1, \dots, N\},
\end{equation}
which is now a probability measure on the space of probability measures $\mathcal{P}\left(\mathcal{C}([0, T])\right)$.  In view of the statement of Theorem  \ref{thm: convergence}, the aim is to study the convergence of $(\Pi^N_i)_{N\geq i}$ for each $i\in \mathbb{N}$.

Throughout this section we will denote by $z=(z_t)_{t\in[0, T]}$ the canonical process on $\mathcal{C}([0, T])$ equipped with the usual uniform topology.  For $t\in[0, T]$ we moreover define
 \begin{equation}
 \label{mn}
 m_t = m_t(z) := \Big\lfloor\Big(\sup_{s\in[0,t]} z_s\Big)_+\Big\rfloor, \quad n_t  = n_t(z) := \int_0^t{G}(t-s)m_s(z) ds,
 \end{equation}
 for any $z\in \mathcal{C}([0, T])$. 
With this notation in place we can re-write \eqref{Z particle} as
\begin{equation}
\label{particle system em}
\begin{cases}
Z_t^i=U^i_0 + H(t)  + \int_0^t b(Z_s^i - M_s^i)ds +\left \langle \bar{\mu}^N_i,n_t\right \rangle + \int_0^t\sigma(Z_s^i - M_s^i)dW_s^i,\\
M_t^i = m_t(Z^i),\\
\tau_k^i= \inf\{t\geq0:  Z_{t}^i \geq k\},\ k\in\mathbb{N}\backslash\{0\},\ \tau_0^i=0,
\end{cases}
\end{equation}
for $t\in[0, T]$ and $i\in\{1, \dots, N\}$.  Here $\langle \bar{\mu}^N_i, n_t\rangle$ stands for the expectation of $n_t$ given by \eqref{mn}
under the measure $\bar{\mu}^N_i$ i.e. $ \langle \bar{\mu}^N_i,n_t \rangle = \int_{\mathcal{C}([0, T])} n_t(z)\bar{\mu}^N_i(dz)$.

\subsection{Tightness of the law of the weighted empirical measure}
In this section we show that for each $i\in\mathbb{N}$, the family of measures $(\Pi^N_i)_{N\geq i}$ is tight.  The first result we need is a bound on the moments of the solution to the system \eqref{Z particle}.
\begin{lem}
\label{lem: moment bounds}
Let $p\geq 1$.  Then, under the conditions of Theorem \ref{thm: convergence}, for any $T> 0$ there exists a constant $C_T^{( p)}$ (independent of $N$)
such that for any $i\in\{1, \dots, N\}$
\[
\E\left(\left[M^{i}_T\right]^p\right)  \leq \E\Big(\Big[\sup_{t\in[0, T]}|Z^{i}_t|\Big]^p\Big) \leq C_T^{( p)}.
\]
\end{lem}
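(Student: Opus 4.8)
The plan is to bound $\E\big(\big[\sup_{t\in[0,T]}|Z_t^i|\big]^p\big)$ directly using the representation \eqref{Z particle}, and then observe that $M_T^i = \lfloor(\sup_{t\in[0,T]}Z_t^i)_+\rfloor \leq \sup_{t\in[0,T]}|Z_t^i|$, which gives the first inequality for free. The key point, as in the estimate \eqref{Lipschitz calc} carried out earlier, is that the interaction term is not genuinely problematic: even though it involves the empirical measure over the other particles, it is \emph{linear} in the counting processes $M^j$ and the kernel $G$ is bounded, so it contributes a term controlled by $\|G\|_\infty \sum_j \tfrac{J_{ij}}{S^N_i} \int_0^t M_s^j\, ds$. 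The subtlety compared with the one-particle estimate is that this term couples particle $i$ to all the other particles, so a naive Gronwall argument on $\sup_{s\le t}|Z^i_s|$ alone does not close; instead I would work with the quantity $\Psi(t) := \max_{1\le i\le N}\E\big(\big[\sup_{s\le t}|Z_s^i|\big]^p\big)$ (or, to be safe about finiteness a priori, first localize with stopping times $\theta_n = \inf\{t : \max_i\sup_{s\le t}|Z^i_s| \ge n\}$ and let $n\to\infty$ at the end via monotone convergence).

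The main steps are as follows. First, from \eqref{Z particle}, for a fixed $i$ and $t\le T$, take absolute values and suprema over $[0,t]$:
\begin{equation*}
\sup_{s\le t}|Z_s^i| \le |U_0^i| + \|H\|_{\infty,T} + \int_0^t |b(Z_s^i - M_s^i)|\,ds + \sum_{j=1}^N \frac{J_{ij}}{S^N_i}\int_0^t |G(t-s)|\,|M_s^j|\,ds + \sup_{s\le t}\Big|\int_0^s \sigma(Z_r^i - M_r^i)\,dW_r^i\Big|.
\end{equation*}
Second, bound $|b(x)| \le \Lambda_b(1+|x|)$ and $|M_s^i| \le \sup_{r\le s}|Z_r^i|$, so the drift term is $\le \Lambda_b t + 2\Lambda_b\int_0^t \sup_{r\le s}|Z_r^i|\,ds$; and the interaction term is $\le \|G\|_\infty \sum_j \tfrac{J_{ij}}{S^N_i}\int_0^t \sup_{r\le s}|Z_r^j|\,ds$, where $\sum_j \tfrac{J_{ij}}{S^N_i} = 1$, so this is $\le \|G\|_\infty \int_0^t \max_k \sup_{r\le s}|Z_r^k|\,ds$. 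Third, raise to the power $p$ using $|a_1+\dots+a_5|^p \le 5^{p-1}\sum |a_i|^p$, apply Jensen to the time integrals, take expectations, use Burkholder--Davis--Gundy together with $\|\sigma\|_\infty \le \Lambda_\sigma$ to control the martingale term by $C_p\Lambda_\sigma^p t^{p/2}$, and use that $|U_0^i|\le R$ a.s. Fourth, take the maximum over $i\in\{1,\dots,N\}$ on the left side; since every bound on the right side only involves $\Psi(s) = \max_k \E(\sup_{r\le s}|Z_r^k|^p)$ for $s\le t$, I obtain $\Psi(t) \le C_T^{(p)}\big(1 + \int_0^t \Psi(s)\,ds\big)$ for a constant depending only on $p$, $T$, $R$, $\Lambda_b$, $\Lambda_\sigma$, $\|H\|_\infty$, $\|G\|_\infty$ — crucially \emph{not} on $N$, because the weights entered only through the convex combination $\sum_j J_{ij}/S^N_i = 1$. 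Fifth, Gronwall's lemma gives $\Psi(T) \le C_T^{(p)}$, and removing the localization by monotone convergence completes the proof.

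The main obstacle — really the only non-routine point — is the coupling between particles: one must resist trying to estimate $\sup_{s\le t}|Z^i_s|$ in isolation and instead set up the Gronwall argument on the $N$-independent quantity $\max_i \E(\cdot)$, exploiting that the synaptic weights appear only via the normalized combination summing to one (so condition \eqref{J condition} on $\sum_j J_{ij}^2/(S^N_i)^2$ is not even needed here — that condition is for the fluctuation/tightness arguments later). A secondary technical care is the a priori finiteness of the expectations, handled by the stopping-time localization mentioned above, exactly parallel to the argument already used to show $\tau_\infty^h = \infty$ in the construction of $(Z^h_t)$.
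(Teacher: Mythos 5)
Your overall strategy matches the paper's: bound $\sup_{t\le T}|Z^i_t|$ using the SDE, control the interaction term via the normalization $\sum_j J_{ij}/S^N_i = 1$, run a Gronwall argument on $\Psi(t)=\max_i\E\big(\sup_{s\le t}|Z^i_s|^p\big)$, then use $M^i_T\le\sup_{t\le T}|Z^i_T|$. The design decision (Gronwall on the $N$-uniform quantity $\max_i\E(\cdot)$, not on a single particle) is exactly the paper's, and your observation that condition \eqref{J condition} is not needed here is correct.

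However, there is a genuine misstep in your second step. You bound the interaction term \emph{pathwise} by
\[
\|G\|_\infty\sum_j\frac{J_{ij}}{S^N_i}\int_0^t\sup_{r\le s}|Z^j_r|\,ds\ \le\ \|G\|_\infty\int_0^t\max_k\sup_{r\le s}|Z^k_r|\,ds,
\]
and then raise to the power $p$ and take expectations. This produces $\E\big[\max_k\sup_{r\le s}|Z^k_r|^p\big]$, and the inequality you would need — $\E[\max_k X_k]\le\max_k\E[X_k]$ — goes in the \emph{wrong direction}; in general $\E[\max_k X_k]$ grows with $N$ even when each $\E[X_k]$ is bounded. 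So as written, step 4's claim that ``every bound on the right side only involves $\Psi(s)$'' is false, and the Gronwall loop does not close. The fix (and what the paper actually does) is to \emph{keep the convex combination} when raising to power $p$: by Jensen's inequality applied to the weights $J_{ij}/S^N_i$ summing to one,
\[
\Big(\sum_j\frac{J_{ij}}{S^N_i}\,a_j\Big)^p\ \le\ \sum_j\frac{J_{ij}}{S^N_i}\,a_j^p,
\]
take expectations to get $\sum_j\frac{J_{ij}}{S^N_i}\E[a_j^p]\le\max_j\E[a_j^p]$, and only \emph{then} pass to the max. The order of operations (Jensen on the convex combination, then expectation, then max) is the whole point. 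Once this is corrected your argument is the same as the paper's; the stopping-time localization you add for a priori finiteness is harmless and reasonable, though the paper does not bother with it.
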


\begin{proof}
Let $i\in\{1, \dots, N\}$, and $T>0$. By \eqref{Z particle} and the Lipschitz property of $b$ we have
\begin{align}
\label{PS lem: AP bound 2}
\sup_{t\in[0, T]}|Z^{i}_t| &\leq |U^{i}_{0}| +\|H\|_\infty + \Lambda_bT + 2\Lambda_b \int_{0}^T\sup_{r\in [0, s]}\left| Z^{i}_r\right|ds\nonumber\\
&\qquad +\|G\|_{\infty}\int_0^T\frac{1}{S^N_i}\sum_{j=1}^NJ_{ij}M_s^{j}ds  + \sup_{t\in[0, T]}\left|\int_0^t\sigma(Z^{i}_s - M^{i}_s )dW_s^i\right|.
\end{align}

Now let $p\geq1$.  Raising both sides of \eqref{PS lem: AP bound 2} to the power $p$ and using Jensen's inequality yields the existence of a constant $C_T^{( p)}$ (depending only on $T$, $p$, and the bounds in Assumptions \ref{assumptions}) such that
\begin{align}
\label{PS lem: AP bound 2 p}
\Big[\sup_{t\in[0, T]}|Z^{i}_t|\Big]^p & \leq C_T^{( p)}\Big(1 + \int_{0}^T\Big[\sup_{r\in [0, s]}\left| Z^{i}_r\right|\Big]^pds+\int_0^T\frac{1}{S^N_i}\sum_{j=1}^NJ_{ij}\left[M_s^{j}\right]^pds \nonumber\\
&\qquad\qquad\qquad\qquad + \Big[\sup_{t\in[0, T]}\Big|\int_0^t\sigma(Z^{i}_s - M^{i}_s )dW_s^i\Big|\Big]^p\Big),
 \end{align}
almost surely. 
Thus, by the Burkh\"older-Davies-Gundy inequality,
\begin{align*}
\max_{i\in\{1, \dots, N\}}\E\Big(\Big[\sup_{t\in[0, T]}|Z^{i}_t|\Big]^p\Big) & \leq C_T^{( p)}\Big(1 + \int_{0}^T\max_{i\in\{1, \dots, N\}}\E\Big(\Big[\sup_{r\in [0, s]}\left| Z^{i}_r\right|\Big]^p\Big)ds\Big),
 \end{align*}
where we have increased the constant $ C_T^{( p)}$ (so that it now depends on $\Lambda_\sigma$) and used the fact that $M_s^{i} \leq \sup_{r\in[0, s]}|Z_r^i|$ for any $s\geq0$ and $i\in\{1,\dots, N\}$.  Gronwall's lemma then yields the result.
\end{proof}

\begin{prop}
\label{tightness}
Under the conditions of Theorem \ref{thm: convergence}, for each $i\in\mathbb{N}$ the family $(\Pi^N_i)_{N\geq i}$ is tight in $ \mathcal{P}(\mathcal{P}(\mathcal{C}([0, T])))$.
\end{prop}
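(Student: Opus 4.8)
The plan is to apply the classical reduction of tightness for laws of random measures: a family $(\Pi^N_i)_{N\geq i}$ in $\mathcal{P}(\mathcal{P}(\mathcal{C}([0, T])))$ is tight as soon as the family of its intensity (mean) measures $\bar\Pi^N_i\in\mathcal{P}(\mathcal{C}([0, T]))$, defined by $\bar\Pi^N_i(A) := \E[\bar\mu^N_i(A)]$ for Borel $A\subseteq\mathcal{C}([0, T])$, is tight in $\mathcal{P}(\mathcal{C}([0, T]))$ (this is classical; see \cite{Sznitman}). By the definition \eqref{mu bar} of the weighted empirical measure,
\[
\bar\Pi^N_i = \frac{1}{S^N_i}\sum_{j=1}^N J_{ij}\,\mathrm{Law}(Z^j),
\]
which is a convex combination of the laws of the processes $Z^1,\dots,Z^N$ solving \eqref{Z particle}. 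Since a convex combination of probability measures taken from a tight family is again tight, it therefore suffices to prove that $\{\mathrm{Law}(Z^j) : N\geq1,\ 1\leq j\leq N\}$ is a tight subset of $\mathcal{P}(\mathcal{C}([0, T]))$.

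For the latter I would use the Kolmogorov-Chentsov tightness criterion on $\mathcal{C}([0, T])$: it is enough to check that the laws of the initial values $\{Z^j_0\}$ are tight --- which is immediate since each $Z^j_0$ has the law of $U_0$ --- and that there exist $p>2$ and a constant $C_T$, independent of $N$ and $j$, such that
\[
\E\left(|Z^j_t - Z^j_s|^p\right) \leq C_T\, |t-s|^{p/2}, \qquad 0\leq s\leq t\leq T.
\]

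To obtain this increment bound I would estimate the four contributions to $Z^j_t - Z^j_s$ coming from \eqref{Z particle} separately. The increment of $H$ is $O(|t-s|)$ since $H'$ is bounded; the increment of the drift $\int_\cdot^\cdot b(Z^j_r - M^j_r)\,dr$ is bounded by $\Lambda_b(1 + 2\sup_{r\leq T}|Z^j_r|)|t-s|$ using the linear growth of $b$ and $M^j_r\leq\sup_{r\leq T}|Z^j_r|$, whose $p$-th moment is controlled uniformly in $N,j$ by Lemma \ref{lem: moment bounds}. For the interaction term one writes, for each $l$,
\[
\int_0^t G(t-u)M^l_u\,du - \int_0^s G(s-u)M^l_u\,du = \int_s^t G(t-u)M^l_u\,du + \int_0^s\big(G(t-u)-G(s-u)\big)M^l_u\,du,
\]
which, using $\|G\|_\infty<\infty$, $\|G'\|_\infty<\infty$ and $0\leq s\leq T$, is bounded by $(\|G\|_\infty + T\|G'\|_\infty)|t-s|\sup_{u\leq T}M^l_u$; averaging over $l$ with the weights $J_{jl}/S^N_j$ and then taking $p$-th moments and applying Jensen's inequality together with Lemma \ref{lem: moment bounds} gives a bound $C_T|t-s|^p$, uniformly in $N$. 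Finally, for the stochastic integral $\int_s^t\sigma(Z^j_r - M^j_r)\,dW^j_r$, the Burkh\"older-Davis-Gundy inequality and the bound $\sigma\leq\Lambda_\sigma$ yield a $p$-th moment bounded by $C_p\Lambda_\sigma^p|t-s|^{p/2}$. Choosing for instance $p=4$ (so that $|t-s|^p\leq T^{p/2}|t-s|^{p/2}$ on $[0, T]$) and combining these estimates gives the required bound, hence the tightness of $\{\mathrm{Law}(Z^j)\}$ and, by the above reduction, the tightness of $(\Pi^N_i)_{N\geq i}$.

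The only delicate point is the interaction (convolution) drift: because the processes $Z^1,\dots,Z^N$ are coupled, its moments cannot be bounded in isolation and the argument genuinely relies on the uniform-in-$N$ moment estimate of Lemma \ref{lem: moment bounds} --- which is precisely what makes the convex-combination structure of $\bar\Pi^N_i$ harmless. I note that the condition \eqref{J condition} on the weights plays no role at this stage; only the nonnegativity of the $J_{ij}$ (so that $\bar\Pi^N_i$ is a genuine convex combination) is used here, and \eqref{J condition} will enter only later, in the identification of the limit points.
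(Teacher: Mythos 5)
Your proposal is correct and follows essentially the same route as the paper: both reduce tightness of $(\Pi^N_i)_{N\geq i}$ to tightness of the intensity measures $\frac{1}{S^N_i}\sum_j J_{ij}\,\mathrm{Law}(Z^j)$ (the paper cites \cite[Proposition 4.6]{Meleard} for this) and then verify the latter via increment estimates that are uniform in $N$ and $j$ thanks to Lemma \ref{lem: moment bounds}. The only cosmetic difference is the criterion used for the intensity measures --- you invoke Kolmogorov's moment criterion with $p=4$, whereas the paper uses \cite[Theorem 4.10]{KS} together with the first-moment modulus estimate $\E(\sup_{|s-t|\leq\delta}|Z^j_t-Z^j_s|)\leq C_T\sqrt{\delta}$ --- and your closing remarks (that \eqref{J condition} is not needed here, only nonnegativity of the weights) are accurate.
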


\begin{proof}
Due to the non-exchangeability of the particle system, we need to be a little careful.  Let $i\in\mathbb{N}$.  Then according to \cite[Proposition 4.6]{Meleard}, the family $(\Pi^N_i)_{N\geq i}$ is tight if and only if the sequence of intensity measures $(I(\Pi^N_i))_{N\geq i}\subset \mathcal{P}(\mathcal{C}([0, T]))$ is tight, where for $N\geq i$, $I(\Pi^N_i)$ is defined by
\[
I(\Pi^N_i)(A) := \int \langle \mu, \Ind_A\rangle \Pi^N_i(d\mu) = \frac{1}{S_i^N}\sum_{j=1}^NJ_{ij}\mathbb{P}\left(Z^{j}\in A\right),
\]
for any $A\in\mathcal{B}(\mathcal{C}([0, T]))$.  It thus suffices to show that $(I(\Pi^N_i))_{N\geq i}$ is tight in $\mathcal{P}\left(\mathcal{C}([0, T])\right)$.  By \cite[Theorem 4.10]{KS}, this in turn is equivalent to showing that
\[
\lim_{K\to\infty} \sup_{N\geq i}I(\Pi^N_i)\left\{z \in \mathcal{C}([0, T]) : |z(0)|>K\right\} = 0,
\]
and 
\[
\lim_{\delta\to 0} \sup_{N\geq i}I(\Pi^N_i)\left\{z \in \mathcal{C}([0, T]) : \sup_{\substack{|s-t|\leq \delta \\0\leq s,t \leq T}}|z(t)-z(s)|>\varepsilon \right\} = 0, \quad \forall\varepsilon>0.
\]
The first condition is clearly satisfied under our assumptions on $U^i_0$.  By the definition of $I(\Pi^N_i)$, the second condition reads
\begin{equation}
\label{prop: tightness 0}
\lim_{\delta\to 0} \sup_{N\geq i}\frac{1}{S^N_i}\sum_{j=1}^NJ_{ij} \mathbb{P}\left\{\sup_{\substack{|s-t|\leq \delta \\0\leq s,t \leq T}}|Z^{j}_t-Z^{j}_s|>\varepsilon \right\} = 0, \qquad \forall \varepsilon>0.
\end{equation}
To check this holds, let $\varepsilon>0$.  Then we have 
\begin{align}
\label{prop: tightness 1}
&\frac{1}{S^N_i}\sum_{j=1}^NJ_{ij} \mathbb{P}\left\{\sup_{\substack{|s-t|\leq \delta \\0\leq s,t \leq T}}|Z^{j}_t-Z^{j}_s|>\varepsilon \right\} \leq \frac{1}{S^N_i}\sum_{j=1}^NJ_{ij} \frac{1}{\varepsilon} \E\left(\sup_{\substack{|s-t|\leq \delta \\0\leq s,t \leq T}}|Z^{j}_t-Z^{j}_s|\right).
\end{align}
To complete the proof, we can deduce by standard arguments (and using the Assumptions \ref{assumptions}) that
\begin{align}
&\E\Big(\sup_{|s-t|\leq \delta,\ 0\leq s,t \leq T}|Z^{i}_t -  Z^{i}_s|\Big) \leq C_T\sqrt{\delta}
 \end{align}
 for all $N\geq i$ and $\delta\in(0,1)$, where $C_T$ is some constant (independent of $i$, $N$ and $\delta$).  Using this in \eqref{prop: tightness 1}, \eqref{prop: tightness 0} certainly holds, and the lemma is proved.
\end{proof}

\subsection{Proof of Theorem \ref{thm: convergence}}
\label{sec: proof of convergence}
By Proposition \ref{tightness} we now know that, for each $i\in \mathbb{N}$, the family $(\Pi^N_i)_{N\geq i}$ has a subsequence (which we also denote by $(\Pi^N_i)_{N\geq i}$) that is weakly convergent to some 
\[
\Pi^\infty_i\in\mathcal{P}\left(\mathcal{P}\left(\mathcal{C}([0, T])\right)\right),
\]
as $N\to\infty$.

\vspace{0.3cm}
\noindent\textit{First step:}
We aim to show that, under the stated conditions of Theorem \ref{thm: convergence}, for every  $i\in \mathbb{N}$
\begin{equation}
\label{Pi infty}
\Pi^\infty_i = \delta_{\mathrm{Law}((Z_t)_{t\in[0, T]})},
\end{equation}
where $(Z_t)_{t\in[0, T]}$ is the unique solution to the limit equation \eqref{Z limit equation}, which is guaranteed to exist by Theorem \ref{thm: existence and uniqueness}.

We in fact show that for almost all $\mu$ under $\Pi^\infty_i$, $\mu$ solves the nonlinear martingale problem associated to the unique solution of \eqref{Z limit equation}.
To this end, we introduce the following operator. For $t\in[0, T]$, $\mu \in \mathcal{P}\left(\mathcal{C}([0, T])\right)$, and $\varphi\in\mathcal{C}_b^2(\R)$ define, for $z \in\mathcal{C}([0, T])$,
\begin{equation}
\label{generator}
\mathcal{L}_{t, \mu} \varphi(z) := \frac{1}{2}\sigma^2(z_t-m_t(z))\varphi''(z_t) + \varphi'(z_t)\left(H'(t) + b(z_t - m_t(z)) + \frac{d}{dt}\langle \mu, n_t(z)\rangle \right),
\end{equation}
where $m_t(z)$ and $n_t(z)$ are defined by \eqref{mn}.  
We will say that $\mu\in\mathcal{P}\left(\mathcal{C}([0, T])\right)$ solves the nonlinear martingale problem associated with \eqref{Z limit equation} if
\begin{equation}
\label{nonlinear mart}
\left(\varphi(z_t) - \varphi(z_0) - \int_0^t\mathcal{L}_{r, \mu} \varphi(z_r)dr\right)_{t\geq0}
\end{equation}
is a martingale under $\mu$, whenever $\varphi\in\mathcal{C}_b^2(\R)$.  By  pathwise uniqueness for the solution $(Z_t)_{t\in[0, T]}$ to \eqref{Z limit equation} (Theorem \ref{thm: existence and uniqueness}), the unique solution to the nonlinear martingale problem must be the law of $(Z_t)_{t\in[0, T]}$ (see for example \cite[Theorem 1.1]{Bass}).  Thus if we can show that for almost all $\mu$ under $\Pi^\infty_i$, $\mu$ solves the nonlinear martingale problem, \eqref{Pi infty} must hold.

To proceed, for a fixed $\varphi\in\mathcal{C}_b^2(\R)$ and $0\leq s\leq t\leq T$, define the functional
\[
F(\mu): = \left\langle \mu, \left(\varphi(z_t) - \varphi(z_s) - \int_s^t\mathcal{L}_{r, \mu} \varphi(z_r)dr\right)\psi(z_{s_1},\dots, z_{s_k}) \right\rangle,
\]
for $\mu\in \mathcal{P}\left(\mathcal{C}([0, T])\right)$. Here $k\in\mathbb{N}$, $\psi\in\mathcal{C}_b(\R^k)$ and $0 < s_1 < \dots < s_k \leq s \leq t \leq T$.  
We then have by It\^o's formula that for $i\in\{1, \dots, N\}$
\begin{align*}
&\E\left(\left[F(\bar{\mu}^N_i)\right]^2\right) = \E\left(\left\langle \bar{\mu}^N_i, \left(\varphi(z_t) - \varphi(z_s) - \int_s^t\mathcal{L}_{r, \bar{\mu}^N_i} \varphi(z_r)dr\right)\psi(z_{s_1},\dots, z_{s_k}) \right\rangle^2\right)\\
&= \E\left(\left[ \frac{1}{S_i^N}\sum_{j=1}^NJ_{ij}\left(\int_s^t\varphi'(Z^j_r)\sigma(Z^j_r - M^j_r)dW_r^j\right)\psi(Z^j_{s_1},\dots, Z^j_{s_k})\right]^2 \right)\\
&\leq \|\psi\|_\infty^2 \E\left(\left[ \frac{1}{S_i^N}\sum_{j=1}^NJ_{ij}\left(\int_s^t\varphi'(Z^j_r)\sigma(Z^j_r - M^j_r)dW_r^j\right)\right]^2 \right).
\end{align*}
Now, by the independence of $(W^j)_{j\in\{1, \dots, N\}}$ (and the progressive measurability of $Z^j - M^j$ for all $j\in\{1, \dots, N\}$), we have that 
\begin{align*}
& \E\left(\left[ \frac{1}{S_i^N}\sum_{j=1}^NJ_{ij}\left(\int_s^t\varphi'(Z^j_r)\sigma(Z^j_r - M^j_r)dW_r^j\right)\right]^2 \right) \\
&\qquad\qquad =\frac{1}{(S_i^N)^2}\sum_{j=1}^NJ^2_{ij}\E\left(\left(\int_s^t\varphi'(Z^j_r)\sigma(Z^j_r - M^j_r)dW_r^j\right)^2\right).
\end{align*}
Thus
\begin{align*}
\E\left(\left[F(\bar{\mu}^N_i)\right]^2\right) 
&\leq \|\psi\|_\infty^2\|\varphi'\|^2_\infty\Lambda_\sigma^2(t-s) \frac{1}{(S_i^N)^2}\sum_{j=1}^NJ^2_{ij} \to 0,
\end{align*}
where we use assumption \eqref{J condition}.  We have thus shown that
\begin{equation}
\label{Nlimit}
\lim_{N\to\infty}\int \left[F(\mu)\right]^2\Pi^N_i(d\mu) = 0, \quad \forall i\in\mathbb{N}.
\end{equation}
In order to proceed, we need the following crucial lemma.

\begin{lem}[Crucial lemma]
\label{lem:functional continuity}
For every $i\in\mathbb{N}$, $\varphi\in\mathcal{C}_b^2(\R)$, $\psi\in\mathcal{C}_b(\R^k)$ and $0 < s_1 < \dots < s_k \leq s \leq t \leq T$, the functional on $\mathcal{P}\left(\mathcal{C}([0, T])\right)$ given by
\[
\mu \mapsto\left\langle \mu, \left(\varphi(z_t) - \varphi(z_s) - \int_s^t\mathcal{L}_{r, \mu} \varphi(z_r)dr\right)\psi(z_{s_1},\dots, z_{s_k}) \right\rangle
\]
is $\Pi^\infty_i$-almost everywhere continuous.
\end{lem}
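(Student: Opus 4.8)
The plan is to locate the single source of discontinuity of the functional $\mu\mapsto F(\mu):=\langle\mu,\Psi_\mu\rangle$, where $\Psi_\mu(z):=\big(\varphi(z_t)-\varphi(z_s)-\int_s^t\mathcal{L}_{r,\mu}\varphi(z_r)\,dr\big)\psi(z_{s_1},\dots,z_{s_k})$, namely the integer part inside $m_r(z)=\lfloor(\sup_{v\le r}z_v)_+\rfloor$. Two reductions come first. Since $\langle\mu,n_t(\cdot)\rangle=\int_0^t G(t-u)\langle\mu,m_u\rangle\,du$ depends on $\mu$ only through the deterministic non-decreasing function $u\mapsto\langle\mu,m_u\rangle=\int m_u(z)\,\mu(dz)$, and since $G(0)=0$ gives $\frac{d}{dt}\langle\mu,n_t\rangle=\int_0^t G'(t-u)\langle\mu,m_u\rangle\,du$, the whole $\mu$-dependence of $\mathcal{L}_{\cdot,\mu}\varphi$ passes through $\langle\mu,m_\cdot\rangle$ alone. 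Moreover $z\mapsto z_r$ and $z\mapsto\psi(z_{s_1},\dots,z_{s_k})$ are continuous, so the only obstructions to continuity of $F$ are (a) the discontinuity of $z\mapsto m_r(z)$, which occurs precisely when $\sup_{v\le r}z_v$ is a positive integer (it is continuous through $0$, as $\lfloor x_+\rfloor\equiv0$ for $x<1$), and (b) the corresponding failure of $\mu\mapsto\langle\mu,m_u\rangle$ to be weakly continuous.

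Introduce the ``good'' set of paths $\mathcal{A}:=\{z\in\mathcal{C}([0,T]):\mathrm{Leb}(\{r\in[0,T]:\sup_{v\le r}z_v\in\{1,2,3,\dots\}\})=0\}$, and claim that $F$ is continuous at every $\mu$ with $\mu(\mathcal{A})=1$ and $\langle\mu,\|z\|_\infty\rangle<\infty$. Indeed, if $z_\ell\to z$ in $\mathcal{C}([0,T])$ with $z\in\mathcal{A}$, then $\sup_{v\le r}(z_\ell)_v\to\sup_{v\le r}z_v$ uniformly in $r$, so $m_r(z_\ell)\to m_r(z)$ for a.e.\ $r$, and by dominated convergence (the $\sigma^2\varphi''$, $H'\varphi'$ and $\frac{d}{dr}\langle\mu,n_r\rangle\,\varphi'$ terms of $\mathcal{L}_{r,\mu}\varphi$ are bounded, while the $b\,\varphi'$ term is dominated by a constant depending only on $\sup_\ell\|z_\ell\|_\infty<\infty$) $\Psi_\mu$ is continuous at $z$; since $\mu(\mathcal{A})=1$, $\Psi_\mu$ is thus $\mu$-a.e.\ continuous. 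Also, if $\mu(\mathcal{A})=1$ then by Fubini $\mu\{z:\sup_{v\le u}z_v\in\{1,2,\dots\}\}=0$ for a.e.\ $u\in[0,T]$, so $\langle\mu_\ell,m_u\rangle\to\langle\mu,m_u\rangle$ for a.e.\ $u$ whenever $\mu_\ell\Rightarrow\mu$, and hence $\|\Psi_{\mu_\ell}-\Psi_\mu\|_\infty\to0$ and $\langle\mu_\ell,\Psi_\mu\rangle\to\langle\mu,\Psi_\mu\rangle$, giving $F(\mu_\ell)\to F(\mu)$. The uniform integrability needed to push weak limits through the at most linearly growing $b$-term and through $m_u\le\sup_v|z_v|$ is supplied by the moment bounds of Lemma~\ref{lem: moment bounds}, which moreover guarantee that $\Pi^\infty_i$ charges only measures of finite first moment.

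It remains to show that $\Pi^\infty_i$-almost every $\mu$ satisfies $\mu(\mathcal{A})=1$, i.e.\ that $\int\mu(\mathcal{A}^c)\,\Pi^\infty_i(d\mu)=I(\Pi^\infty_i)(\mathcal{A}^c)=0$. Write $\mathcal{A}^c=\bigcup_{k\ge1}\bigcup_{n\ge1}C_{k,n}$ with $C_{k,n}=\{z:\mathrm{Leb}(\{r:\sup_{v\le r}z_v=k\})\ge 1/n\}$, and enclose each $C_{k,n}$ in the \emph{open} set
\[
G_{k,n}^{\delta}:=\Big\{z:\exists\,r_0\in[0,T-\tfrac1n]\ \text{with}\ \sup_{v\le r_0}z_v>k-\delta\ \text{and}\ \sup_{v\le r_0+1/n}z_v<k+\delta\Big\},\qquad\delta>0,
\]
which decreases to $C_{k,n}$ as $\delta\downarrow0$ (the inclusion $C_{k,n}\subseteq G_{k,n}^\delta$ follows by taking $r_0$ equal to the left endpoint of the interval on which $\sup_{v\le\cdot}z_v\equiv k$). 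Since $I(\Pi^N_i)=\tfrac1{S^N_i}\sum_{j}J_{ij}\,\mathrm{Law}(Z^j)$ and $\Pi^N_i\Rightarrow\Pi^\infty_i$ forces $I(\Pi^N_i)\Rightarrow I(\Pi^\infty_i)$, the portmanteau theorem for open sets gives, for every $\delta>0$,
\[
I(\Pi^\infty_i)(C_{k,n})\ \le\ I(\Pi^\infty_i)(G_{k,n}^\delta)\ \le\ \liminf_{N\to\infty}I(\Pi^N_i)(G_{k,n}^\delta)\ \le\ \sup_{N,\,j\le N}\mathbb{P}\big(Z^j\in G_{k,n}^\delta\big),
\]
so everything reduces to the uniform-in-$(N,j)$ bound $\sup_{N,j}\mathbb{P}(Z^j\in G_{k,n}^\delta)\to0$ as $\delta\to0$.

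This last estimate is the main obstacle. Note that $\{Z^j\in G_{k,n}^\delta\}$ is the event that the running maximum of $Z^j$ takes more than time $1/n$ to pass from $k-\delta$ to $k+\delta$; conditioning (strong Markov for the whole $N$-particle system) at the stopping time $\inf\{r:Z^j_r\ge k-\delta\}$, its probability equals that of the post-time process, started at $k-\delta$, not exceeding $k+\delta$ within time $1/n$. On a ``good'' event of probability $\ge1-\omega(L)$ \emph{uniformly in $N,j$} — whose complement is controlled, via Lemma~\ref{lem: moment bounds}, by $\{\sup_{[0,T]}|Z^j|>L\}$ and by $\{\tfrac1{S^N_i}\sum_{j'}J_{ij'}\sup_{[0,T]}|Z^{j'}|>L\}$, the latter being an empirical average — this post-time process has diffusion coefficient valued in $[\Lambda_\sigma^{-1},\Lambda_\sigma]$ and drift bounded by a constant $D_L$ until it exceeds $k+\delta$. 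A time-change of $\int_0^\cdot\sigma(\cdot)\,dW$ then bounds $\mathbb{P}(\text{not exceeding }k+\delta\text{ within }1/n)$ by $\mathbb{P}\big(\sup_{u\le\Lambda_\sigma^{-2}/n}(\beta_u-D_L\Lambda_\sigma^2 u)<2\delta\big)+\omega(L)$ with $\beta$ a standard Brownian motion, and the first term tends to $\mathbb{P}(\sup_{u\le\Lambda_\sigma^{-2}/n}(\beta_u-D_L\Lambda_\sigma^2 u)\le0)=0$ as $\delta\to0$. Letting first $\delta\to0$ and then $L\to\infty$ gives $I(\Pi^\infty_i)(C_{k,n})=0$ for all $k,n$, hence $I(\Pi^\infty_i)(\mathcal{A}^c)=0$, which with the previous paragraph proves the lemma. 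The delicate points here are exactly the two generalizations of this paper: ruling out that $Z^j$ lingers at an integer level needs the uniform ellipticity of $\sigma$, and keeping the estimate uniform in $N$ needs the non-constant weights to enter the drift only through an empirical average kept $O(1)$ by Lemma~\ref{lem: moment bounds} and \eqref{J condition}, together with the localization forced by the merely linear growth of $b$.
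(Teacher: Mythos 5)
Your proof is essentially correct but takes a genuinely different route from the paper's. The paper introduces the coupled empirical measure $\bar\theta^N_i = \frac{1}{S^N_i}\sum_j J_{ij}\delta_{(Z^j,\,\int_0^\cdot \sigma(Z^j-M^j)dW^j)}$ with law $\Xi^N_i$, passes to a limit $\Xi^\infty_i$, and proves the crossing property \emph{in the limit}: under $\Xi^\infty_i$-a.e.\ $\theta$, the second marginal $w$ is a continuous martingale with $t\Lambda_\sigma^{-2}\le[w]_t\le t\Lambda_\sigma^2$ (hence a time-changed Brownian motion via DDS), and the path-increment inequality transferred by portmanteau then rules out $z$ touching without crossing an integer. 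You instead prove the quantitative estimate $\sup_{N,j}\mathbb{P}(Z^j\in G_{k,n}^\delta)\to 0$ \emph{before} passing to the limit, where the martingale decomposition of $Z^j$ is explicit, and then transfer it to $I(\Pi^\infty_i)$ by applying portmanteau to the intensity measures directly. Both routes ultimately hinge on DDS, a localized drift bound, and uniform ellipticity of $\sigma$; yours avoids the coupled measure machinery but needs the uniform-in-$(N,j)$ estimate, which the paper's approach circumvents by carrying the martingale data to the limit. A few imprecisions to flag: the invocation of the ``strong Markov property for the whole $N$-particle system'' is not quite right --- the system is not Markovian in $(Z^1,\dots,Z^N)$ because the drift of $Z^j$ involves the history of $(M^{j'})_{j'}$ --- but your argument only requires the continuous local martingale property of $\int_0^\cdot\sigma(Z^j-M^j)dW^j$ restarted at the stopping time $\inf\{r:Z^j_r\ge k-\delta\}$, which holds without any Markov structure; the good event should control $\frac{1}{S^N_j}\sum_{j'}J_{jj'}\sup_{[0,T]}|Z^{j'}|$ (with index $j$, the particle under consideration, not $i$), though Lemma~\ref{lem: moment bounds} gives a uniform bound either way; and \eqref{J condition} is not actually used in this part of the argument, contrary to your closing remark --- uniformity in $N$ comes purely from Lemma~\ref{lem: moment bounds}. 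Finally, the uniform-integrability step needed to pass weak limits through the linearly growing functionals ($b(z_r-m_r(z))$ and $m_u\le\|z\|_\infty$) is asserted rather than carried out in detail; this is at the same level of rigor as the paper's own treatment of that point, so it is a shared gap rather than one specific to your approach.
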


We delay the proof of this result until the next section, and devote the rest of this section to completing the proof of Theorem \ref{thm: convergence}.
 Given Lemma \ref{lem:functional continuity}, it then follows from \eqref{Nlimit} and the continuous mapping theorem (see for example \cite[Theorem 2.7]{Billingsley}) that for each $i\in\mathbb{N}$
\[
 \int  \left[F(\mu)\right]^2 \Pi^\infty_i(d\mu) = 0.
\]
We can then conclude that 
\[
F(\mu) = \left\langle \mu, \left(\varphi(z_t) - \varphi(z_s) - \int_s^t\mathcal{L}_{r, \mu} \varphi(z_r)dr\right)\psi(z_{s_1},\dots, z_{s_k}) \right\rangle = 0,
\]
for almost all $\mu$ under $\Pi^\infty_i$.  In other words, for almost all $\mu$ under $\Pi^\infty_i$, $\mu$ solves the nonlinear martingale problem \eqref{nonlinear mart}, which as observed above, has $\mathrm{Law}((Z_t)_{t\in[0, T]})$ as its unique solution.  We have thus shown that, given Lemma  \ref{lem:functional continuity}, \eqref{Pi infty} does indeed hold.


\vspace{0.3cm}
\noindent\textit{Second step:}
To complete the proof of Theorem \ref{thm: convergence}, it remains to deduce from the above that
\begin{equation}
\label{end claim}
\mathrm{Law}\left(\frac{1}{S^N_i}\sum_{j=1}^NJ_{ij}\delta_{U_j}\right) \Rightarrow \delta_{\mathrm{Law}\left(\left(U_t\right)_{t\in[0, T]}\right)},
\end{equation}
as $N\to\infty$ for all $i\in\mathbb{N}$, where the convergence is now in the weak sense in the space $\mathcal{P}(\mathcal{P}(\mathcal{D}([0, T])))$, and $(U_t)_{t\in[0, T]}$ is the unique solution to the original limit equation \eqref{limit equation}.  To this end, the weak convergence of $\Pi_i^N$ to $\Pi_i^\infty = \delta_{\mathrm{Law}((Z_t)_{t\in[0, T]})}$ says that (by the portmanteau theorem)
\[
\int\langle \mu, \Psi \rangle \Pi_i^N(d\mu) \to \int\langle \mu, \Psi \rangle \Pi_i^\infty(d\mu) = \E(\Psi(Z)),
\]
for all bounded $\Psi:\mathcal{C}([0, T])\to\R$ that are almost surely continuous on \linebreak$\mathrm{supp}(\mathrm{Law}((Z_t)_{t\in[0, T]}))$.
Now suppose that $\tilde{\Psi}:\mathcal{D}([0, T])\to\R$ is bounded and almost surely continuous on $\mathrm{supp}(\mathrm{Law}((U_t)_{t\in[0, T]}))$, where
$U_t := Z_t - M_t$.  Admit for the moment that the bounded map $\Psi:\mathcal{C}([0, T])\to\R$ defined by
\[
\Psi(z) := \tilde{\Psi}(z - m(z)),\qquad z\in  \mathcal{C}([0, T]),
\]
where $m(z)$ is given by \eqref{mn}, is almost surely continuous under \linebreak $\mathrm{supp}(\mathrm{Law}((Z_t)_{t\in[0, T]}))$.
Then
\[
\E\left(\frac{1}{S^N_i}\sum_{j=1}^NJ_{ij}\tilde{\Psi}(Z^j - M^j) \right) = \E\left(\frac{1}{S^N_i}\sum_{j=1}^NJ_{ij}\tilde{\Psi}(U^j) \right)   \to \E\left(\tilde{\Psi}(U)\right)
\]
as $N\to\infty$ for all bounded $\tilde{\Psi}:\mathcal{D}([0, T])\to\R$ that are almost surely continuous on $\mathrm{supp}(\mathrm{Law}((U_t)_{t\in[0, T]}))$.  

To complete the proof of the claim \eqref{end claim}, and hence Theorem \ref{thm: convergence}, it remains to justify that $\Psi$ is almost surely  continuous under  $\mathrm{supp}(\mathrm{Law}((Z_t)_{t\in[0, T]}))$, for which it suffices to show that $z^k \to z$ in $\mathcal{C}([0, T])$ implies $z^k - m(z^k) \to z - m(z)$ in $\mathcal{D}([0, T])$ for all $z \in \mathrm{supp}(\mathrm{Law}((Z_t)_{t\in[0, T]}))$.  To see this, set $\bar{z}^k= z^k - m(z^k)$, $\bar{z} = z - m(z)$.   To show that $\bar{z}^k\to\bar{z}$ in $\mathcal{D}([0, T])$ we must check (a), (b) and (c) of  \cite[Chapter 3, Proposition 6.5]{Ethier-Kurtz}.  Let $(t_k)\subset[0, T]$ be such that $\lim_{k\to\infty}t_k = t\in[0, T]$.  The only possible problem points are when $t\in I$, the countable set of hitting times of $1$ by $\bar{z}$ (see Lemma \ref{lem:crossing property}), so without loss of generality suppose $t$ is the first of these hitting times.  To see that (a) holds at $t$, either ${z}^k_{t_k} <1$ for all $k$, in which case 
$z^k_{t_k} = \bar{z}^k_{t_k} \to \bar{z}_{t-} = {z}_{t} =1$, or $\exists k$ such that ${z}^k_{t_k} \geq 1$, in which case for $k$ large enough
$|\bar{z}^k_{t_k} - \bar{z}_t| = |{z}^k_{t_k} - 1| = |{z}^k_{t_k} - z_{t}| \to0$ since $z^k\to z$ in $\mathcal{C}([0, T])$.  Thus 
$|\bar{z}^k_{t_k} - \bar{z}_t| \wedge |\bar{z}^k_{t_k} - \bar{z}_{t-}|\to0$ as $k\to \infty$.  Points (b) and (c) follow similarly.

\subsection{Proof of crucial Lemma \ref{lem:functional continuity}}
The proof of Lemma \ref{lem:functional continuity} follows \cite[Lemma 5.10]{DIRT2}, but we must adjust for the fact that here we are working with a general diffusion coefficient, rather than a constant.  In counterpart, the proof in our case simplifies in places due to the fact \rev{that} we work on the space of continuous functions with the usual topology (rather than on the Skorohod space).

Fix $i\in \mathbb{N}$. Let $(\mu^l)_{l\geq 1}\subset \mathrm{supp}(\Pi^\infty_i)$ be a sequence converging to   $\mu\in \mathrm{supp}(\Pi^\infty_i)$ in the weak sense.  The lemma will be proved if we can show that
\begin{align*}
&\lim_{l\to\infty} \left\langle\mu^{l}, \left(\varphi(z_t) - \varphi(z_s) - \int_s^t\mathcal{L}_{r, \mu^{l}} \varphi(z_r)dr\right)\psi(z_{s_1},\dots, z_{s_k}) \right\rangle \\
&\qquad = \left\langle\mu, \left(\varphi(z_t) - \varphi(z_s) - \int_s^t\mathcal{L}_{r, \mu} \varphi(z_r)dr\right)\psi(z_{s_1},\dots, z_{s_k})\right\rangle.
\end{align*}
By the definition of $\mathcal{L}_{r, \mu}$, since $\varphi\in\mathcal{C}^2_b(\R)$, $\psi\in\mathcal{C}_b(\R^k)$ and by Assumptions \ref{assumptions}, one can see that it suffices to show that
\begin{equation}
\label{show 1}
\lim_{l\to\infty} \left\langle\mu^{l}, \int_s^tm_r(z)dr\right\rangle = \left\langle\mu, \int_s^tm_r(z)dr\right\rangle,
\end{equation}
where $m_r$ is defined by \eqref{mn}.
We need the following lemma:

\begin{lem}
\label{lem:crossing property}
Let $i\in \mathbb{N}$.  For every $\mu \in \mathrm{supp}(\Pi^\infty_i) \subset \mathcal{P}(\mathcal{C}([0, T]))$, the following crossing property is satisfied: for every integer $k\geq1$ and every $\varepsilon>0$
\begin{equation}
\label{crossing property}
\mu\left\{z\in \mathcal{C}([0, T]): \tau_k(z)<T,\ \sup_{t\in [\tau_k(z), (\tau_k(z) + \varepsilon)\wedge T)}(z_t - k) = 0 \right\} = 0,
\end{equation}
where $\tau_k(z) = \inf\{t\in[0, T]: z_t \geq k\}$ ($\inf\emptyset = T$).
\end{lem}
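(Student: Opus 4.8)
The plan is to transfer the crossing property from the particle system to any limit point $\mu$ of $(\Pi^N_i)_{N\ge i}$, exploiting the fact that each $Z^j$ in the prelimit is a nondegenerate It\^o diffusion (thanks to the ellipticity lower bound $\sigma \ge \Lambda_\sigma^{-1}$ in Assumptions \ref{assumptions}) between its spiking times. Concretely, fix $i\in\mathbb{N}$, an integer $k\ge1$ and $\varepsilon>0$, and let $\mu\in\mathrm{supp}(\Pi^\infty_i)$, so that along a subsequence $\bar\mu^N_i \Rightarrow \mu$ (or rather $\Pi^N_i \Rightarrow \Pi^\infty_i$ and $\mu$ is in the support). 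The set in \eqref{crossing property} is, unfortunately, not open or closed in the uniform topology, so I would first enlarge it slightly: for $\eta>0$ define
\[
A_{k,\varepsilon,\eta} := \left\{ z\in\mathcal{C}([0,T]) : \tau_k(z) < T-\eta,\ \sup_{t\in[\tau_k(z),\,(\tau_k(z)+\varepsilon)\wedge T)}(z_t - k) \le \eta \right\},
\]
and observe that the event in \eqref{crossing property} is contained in $\bigcup_{\eta>0}\bigcap_{\eta'<\eta}\,$(something measurable); more practically, it suffices to show $\mu(A_{k,\varepsilon,\eta}')=0$ for a suitable slightly-open version and all small $\eta$, then let $\eta\downarrow0$ and use that $\tau_k$ is lower semicontinuous. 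The key structural point is that on the event $\{\tau_k(z)<T\}$, immediately after time $\tau_k(z)$ the canonical coordinate has reached level $k$, and we are asking for the probability that it then stays below $k$ for a whole interval of length $\varepsilon$; since $z_{\tau_k}=k$ by continuity, this is asking the process not to re-cross upward even infinitesimally.

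The main work is the uniform (in $N$) estimate at the particle level:
\[
\sup_{N\ge i}\ \frac{1}{S^N_i}\sum_{j=1}^N J_{ij}\,\mathbb{P}\!\left\{ \tau_k(Z^j)<T,\ \sup_{t\in[\tau_k(Z^j),(\tau_k(Z^j)+\varepsilon)\wedge T)}(Z^j_t - k)\le \eta \right\} \le \omega(\eta),
\]
with $\omega(\eta)\to0$ as $\eta\to0$. To get this, condition on $\mathcal{F}_{\tau_k^j}$ on the event $\{\tau_k^j<T\}$ (a stopping time for the $j$th filtration): by the strong Markov property and the representation \eqref{Z particle}, after $\tau_k^j$ the process $Z^j$ evolves, up to its next spike, as $Z^j_{\tau_k^j+u} = k + (\text{drift} + \text{kernel terms}) + \int \sigma\, dW$, where the drift and the $\langle\bar\mu^N_i, n_\cdot\rangle$-type terms are bounded on $[0,T]$ uniformly in $N$ by Assumptions \ref{assumptions} and the boundedness of $G$, $H$, $b$ (with the linear growth of $b$ controlled via Lemma \ref{lem: moment bounds}). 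Comparing with a drifted Brownian motion $\Lambda_\sigma^{-1}\tilde W_u + c u$ started at $0$, the event that $Z^j$ stays $\le k+\eta$ on $[0,\varepsilon\wedge(T-\tau_k^j)]$ forces the rescaled Brownian motion to stay below a small barrier; and it is classical that $\mathbb{P}(\sup_{u\le \delta}(\Lambda_\sigma^{-1}\tilde W_u + cu)\le \eta)\to0$ as $\eta\to0$ uniformly for $\delta$ bounded away from $0$ — indeed the supremum of a standard Brownian motion over $[0,\delta]$ is strictly positive a.s. with a distribution having no atom at $0$, so $\mathbb{P}(\sup_{u\le\delta}\tilde W_u \le a) = \mathbb{P}(|\tilde W_\delta| \le a) \to 0$ as $a\to0$. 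The mild subtlety that $\tau_k^j$ could be within $\varepsilon$ of $T$ is absorbed by the $<T-\eta$ cutoff in $A_{k,\varepsilon,\eta}$, at the cost of an $\eta$-independent but $\eta$-controllable boundary term that vanishes as $\eta\downarrow 0$ after first sending $N\to\infty$; alternatively one restricts to $\tau_k(z)<T$ and splits $[\tau_k,T)$, handling the piece near $T$ by monotone convergence in $\varepsilon'\le\varepsilon$.

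Finally I would push this to the limit: since $I(\Pi^N_i)$ is (essentially) $\tfrac1{S^N_i}\sum_j J_{ij}\,\mathrm{Law}(Z^j)$ and $\Pi^N_i\Rightarrow\Pi^\infty_i$, the intensity measures converge weakly to $I(\Pi^\infty_i) = \int \mu'\,\Pi^\infty_i(d\mu')$ (see \cite[Proposition 4.6]{Meleard} as already used in Proposition \ref{tightness}). Applying the portmanteau theorem to a suitable \emph{open} superset of $A_{k,\varepsilon,\eta}$ (open because we used the strict inequality $\le\eta$ can be relaxed to $<2\eta$ and $\tau_k<T-\eta$ is an open condition once we know, from Lemma \ref{lem:crossing property} applied inductively or from continuity of paths, that hitting happens transversally — or simply bound by $\liminf$ of the prelimit mass on the open set) gives
\[
\int \mu'\big(\{z : \tau_k(z)<T-\eta,\ \sup_{[\tau_k,(\tau_k+\varepsilon)\wedge T)}(z-k) < 2\eta\}\big)\,\Pi^\infty_i(d\mu') \le \liminf_N \omega(2\eta) = \omega(2\eta).
\]
Since this holds for $\Pi^\infty_i$-a.e.\ $\mu'$ after taking $\eta\downarrow0$ through a countable sequence, and the events increase to the event in \eqref{crossing property}, we conclude $\mu'(\{z: \tau_k(z)<T,\ \sup_{[\tau_k,(\tau_k+\varepsilon)\wedge T)}(z-k)=0\})=0$ for $\Pi^\infty_i$-a.e.\ $\mu'$, hence for every $\mu\in\mathrm{supp}(\Pi^\infty_i)$ by closedness of the support and continuity of $\mu'\mapsto\mu'(\text{closed set})$ from above. \emph{The main obstacle} is the bookkeeping around the non-open/non-closed nature of the target event (handling $\tau_k$ near $T$, and the strict-vs-nonstrict barrier), which forces the $\eta$-enlargement and a careful order of limits ($N\to\infty$ first, then $\varepsilon'\uparrow\varepsilon$ or $\eta\downarrow0$); the probabilistic heart — that a uniformly elliptic diffusion started exactly at a level cannot avoid crossing it upward over a macroscopic interval with positive probability bounded away from full mass — is routine.
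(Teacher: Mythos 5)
Your overall strategy --- prove a quantitative ``crossing'' estimate at the particle level, uniformly in $N$, and then push it through to the limit via the portmanteau theorem --- is a natural first idea, and your prelimit probabilistic estimate is essentially sound: after $\tau_k^j$ the process $Z^j$ is a continuous semimartingale with bounded drift (modulo Lemma \ref{lem: moment bounds}) and uniformly elliptic diffusion, so conditioning on $\mathcal{F}_{\tau_k^j}$ and comparing with a drifted Brownian motion gives the small-mass estimate you want. However, there is a genuine gap in the transfer-to-the-limit step, and it is not mere bookkeeping.

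The obstruction is that there is no open set $G$ containing $A := \{z : \tau_k(z)<T,\ \sup_{[\tau_k(z),(\tau_k(z)+\varepsilon)\wedge T)}(z_t-k)=0\}$ whose prelimit mass is uniformly small. Any path in $A$ touches the level $k$ exactly and never exceeds it on $[\tau_k,\tau_k+\varepsilon)$. An open neighbourhood of such a path in the uniform topology necessarily contains paths that approach $k$ without ever reaching it (since $z_s\geq k$ is a closed constraint), and ``get within $\eta$ of $k$ and then stay below $k+\eta$'' is a positive-probability event for each $Z^j$, with mass that does \emph{not} go to zero as $\eta\downarrow0$ (the process can simply come close to $k$ and drift away downward). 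So the portmanteau inequality $\liminf_N I(\Pi^N_i)(G)\geq I(\Pi^\infty_i)(G)$ does not give you anything small on the right. You acknowledge this indirectly when you invoke ``transversal hitting'' or ``the lemma applied inductively,'' but that is precisely the crossing property you are trying to prove; the fix is circular.

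This is exactly the difficulty that the paper's proof is engineered to avoid. Rather than pushing the crossing event itself through the weak limit, the paper augments the empirical measure to $\bar\theta^N_i$, which also tracks the driving stochastic integral $\int_0^{\cdot}\sigma(Z^j_s-M^j_s)\,dW^j_s$. It then passes to the limit two auxiliary facts that \emph{are} portmanteau-friendly: a closed event (the uniform-in-$(s,t)$ comparison inequality $z_t-z_s\geq w_t-w_s-c_T(t-s)(1+K)$, used in Step 1) and the martingale/quadratic-variation structure of $w$ (Step 2). In the limit, $w$ is a time-changed Brownian motion with nondegenerate clock, and the crossing property for the $z$-coordinate is deduced from the corresponding property of Brownian motion via the Step 1 inequality, not by limiting the crossing event directly. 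If you want to repair your proposal you would essentially have to reinvent this coupling device: some representation of the noise has to be carried into the limit so that one can argue pathwise about the post-$\tau_k$ increments rather than trying to approximate the ``touch-but-do-not-cross'' set by open sets, which cannot work.
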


We prove this lemma in the next section, but first show that it is enough to complete the proof of Lemma \ref{lem:functional continuity}.
Indeed, let $z\in\mathrm{supp}(\mu)$, and let $(z^k)_{k\geq1}\subset \mathcal{C}([0, T])$ be a sequence converging to $z$ in  $\mathcal{C}([0, T])$.  Thanks to Lemma \ref{lem:crossing property}, we have that there exists a countable set $I\subset [0, T]$ (consisting of all times at which $z$ crosses a new integer and possibly $T$) such that 
\[
m_r(z^k) \to m_r(z), \qquad r\in[0, T]\backslash I,
\]
almost surely. \rev{The point is that Lemma \ref{lem:crossing property} rules out the possibility of $z$ touching but not crossing any integer.  If this were possible, there could be a subset of $[0, T]$ of positive Lebesgue measure upon which $m_r(z^k) \not\to m_r(z)$, and Lemma \ref{lem:functional continuity} would not hold.}

Thus, the application $z\mapsto \int_s^t m_r(z)dr$ is continuous almost surely for all $z$ in the support of $\mu$.  Therefore, by the continuous mapping theorem \cite[Theorem 2.7]{Billingsley}, \eqref{show 1} holds.
\qed

\subsection{Proof of Lemma \ref{lem:crossing property}}
Let $i\in \mathbb{N}$.  The proof is again adapted from \cite{DIRT2}, though we again must generalize it for a general diffusion coefficient $\sigma$.
To proceed, we introduce the coupled weighted empirical measure
\[
\bar{\theta}^N_i := \frac{1}{S^N_i}\sum_{j=1}^NJ_{ij}\delta_{(Z^{j}, \int_0^\cdot \sigma(Z^j_s - M^j_s)dW^j_s)} \in \mathcal{P}\left(\mathcal{C}([0, T])\times\mathcal{C}([0, T])\right),
\]
where $(Z^j)_{j\in\{1, \dots, N\}}$ is the solution to the system \eqref{Z particle} as usual and $(W^j)_{j\in\{1, \dots, N\}}$ are the corresponding independent Brownian motions driving the system. Note that the marginal of $\bar{\theta}^N_i$ on the first coordinate space of $\mathcal{C}([0, T])\times\mathcal{C}([0, T])$ is $\bar{\mu}^N_i$ given by \eqref{mu bar}.

We also define
\[
\Xi^N_i := \mathrm{Law}(\bar{\theta}^N_i), \quad i\in\{1, \dots, N\},
\]
so that $\Xi^N_i\in \mathcal{P}\left[\mathcal{P}\left(\mathcal{C}([0, T])\times\mathcal{C}([0, T])\right)\right]$.  In a very similar way to Lemma \ref{tightness}, one can see that $(\Xi^N_i)_{N\geq i}$ is tight, for any $i\in\mathbb{N}$.  We can thus extract a weakly convergent subsequence, still denoted by $(\Xi^N_i)_{N\geq i}$ which converges to some $\Xi^\infty_i$.  
\rev{Our objective now is to show that the first marginal of $\Xi^\infty_i$ satisfies property \eqref{crossing property}.}

\vspace{0.3cm}
\noindent\textit{Step 1:}  
By definition of the particle system \eqref{Z particle}, for any $0\leq s \leq t \leq T$ and $N\geq i$ we have
\begin{align*}
Z^{i}_t -  Z^{i}_s 
&\geq - c_T(t-s)\left(1 +  \max_{j\in\{1, \dots, N\}}\sup_{r\leq T}|Z^{j}_r|\right) + \int_s^t\sigma(Z^{i}_r - M^{i}_r)dW^i_r
 \end{align*}
almost surely, for a constant $c_T>0$ that depends only on $T$ and the bounds in Assumptions \ref{assumptions}.
Thus 
\begin{align*}
 \frac{1}{S^N_i}\sum_{j=1}^NJ_{ij}\left(Z^{j}_t -  Z^{j}_s\right)
&\geq - c_T(t-s)\left(1 + \mathcal{Z}_T\right) + \frac{1}{S^N_i}\sum_{j=1}^NJ_{ij}\int_s^t\sigma(Z^{j}_r - M^{j}_r)dW^j_r,
 \end{align*}
almost surely, where for convenience we have used the notation $\mathcal{Z}_T:= \max_{j}\sup_{r\leq T}|Z^{j}_r|$.  Hence
\begin{align*}
1 &= \mathbb{P}\left(\sum_{j=1}^N\frac{J_{ij}}{S^N_i}\left(Z^{j}_t -  Z^{j}_s\right)\geq \sum_{j=1}^N\frac{J_{ij}}{S^N_i}\int_s^t\sigma(Z^{j}_r - M^{j}_r)dW^j_r - c_T(t-s)\left(1 +  \mathcal{Z}_T\right) \right) \\
 & \leq \mathbb{P}\left(\sum_{j=1}^N\frac{J_{ij}}{S^N_i}\left(Z^{j}_t -  Z^{j}_s\right)\geq   \sum_{j=1}^N\frac{J_{ij}}{S^N_i}\int_s^t\sigma(Z^{j}_r - M^{j}_r)dW^j_r - c_T(t-s)(1+K)\right) \\
 &\qquad\qquad +
 \mathbb{P}\left(\mathcal{Z}_T\geq K\right),
  \end{align*}
  for any $K\geq0$.  By Lemma \ref{lem: moment bounds}, we have that
 \[
  \mathbb{P}\left(\mathcal{Z}_T\geq K\right) \leq \frac{1}{K}\max_{j\in\{1, \dots, N\}}\E\left(\sup_{r\leq T}|Z^{j}_r|\right) \leq \frac{C_T}{K}
 \]
 for a constant $C_T$ independent of $N$.  Thus for any $i\in \mathbb{N}$ and $N\geq i$ we have shown that
 for almost all $\theta$ under $\Xi_i^N$ it holds that
\begin{equation}
\label{theta bound}
 \theta\left\{ z_t - z_s \geq w_t - w_s - c_T(t-s)(1 +K),\ \forall 0\leq s\leq t\leq T\right\} \geq 1 - \frac{C_T}{K}
 \end{equation}
 for all $K\geq0$, where now $(z,w)$ is the canonical process on $\mathcal{C}([0, T])\times\mathcal{C}([0, T])$. We then claim that the same must also be true for all $\theta$ under $\Xi_i^\infty$.  Indeed, \eqref{theta bound} reads
 \[
 \Xi_i^N \Big\{\theta: \theta\left\{ z_t - z_s \geq w_t - w_s - c_T(t-s)(1 +K),\ \forall 0\leq s\leq t\leq T\right\} \geq 1 - \frac{C_T}{K}\Big\} = 1.
 \]
 Since $\Xi_i^N \to\Xi_i^\infty$ in the weak sense, by the portmanteau theorem, $\limsup_N \Xi_i^N(\mathcal{A}) \leq \Xi_i^\infty(\mathcal{A})$ for all closed $\mathcal{A}$.  Choose $ \mathcal{A} := \{\theta: \theta(\mathcal{B}_K) \geq 1 - \frac{C_T}{K}\}$ where 
\[
\mathcal{B}_K := \big\{(z,w): z_t - z_s \geq w_t - w_s - c_T(t-s)(1 +K),\ \forall 0\leq s\leq t\leq T\big\} .
\]
We claim that $\mathcal{A}$ is closed in $\mathcal{P}\left(\mathcal{C}([0, T])\times\mathcal{C}([0, T])\right)$.  Indeed, if $(\theta^k)_{k\geq1} \subset\mathcal{A}$ is a sequence converging in the weak sense to $\theta$, since $\mathcal{B}_K$ is clearly a closed subset of $\mathcal{C}([0, T])\times\mathcal{C}([0, T])$, it follows from the portmanteau theorem again that
 \[
1 - \frac{C_T}{K}\leq \limsup_k\theta^k(\mathcal{B}_K)\leq \theta(\mathcal{B}_K) \ \Rightarrow\ \theta \in \mathcal{B}_K.
 \]
 Thus $\mathcal{A}$ is indeed closed, so we can conclude that
  \[
1= \limsup_N \Xi_i^N (\mathcal{A}) \leq \Xi_i^\infty (\mathcal{A}) \ \Rightarrow\  \Xi_i^\infty (\mathcal{A}) =1.
 \]
 
 \vspace{0.3cm}
\noindent\textit{Step 2:} By Step 1, we see that for almost all $\theta$ under $\Xi^\infty_i$ and any $\varepsilon>0$, $k\geq1$, 
\begin{align}
\label{step2}
& \qquad \qquad \theta\left\{z: \tau_k(z)<T, \sup_{t\in[\tau_k(z), (\tau_k(z) + \varepsilon)\wedge T)}(z_t - k) = 0 \right\}\\
& \leq \theta\left\{z: \tau_k(z)<T, \sup_{t\in[\tau_k(z), (\tau_k(z) + \varepsilon)\wedge T)}[w_t - w_{\tau_k(z)} - c_T(t-\tau_k(z))\left(K+1\right)] = 0 \right\} + \frac{C_T}{K}\nonumber,
\end{align}
for any $K\geq0$.  The aim of this step is to show that the first term in the right-hand side of the above is zero.

We first claim that $w$ is a continuous martingale under $\theta$ for almost all $\theta$ under $\Xi^\infty_i$ (with respect to the canonical filtration $(\mathcal{F}_t)_{t\geq0}$ generated by $(z, w)$).  To this end, fix $s, t\in[0, T]$ with $s\leq t$.  Consider, for $N\geq i$,
\[
Q_{s, t}^{N,i} := \int \left( \langle \theta, (w_t - w_s)\varphi^s(z, w)\rangle\right)^2\Xi^N_i(d\theta)\rev{,}
\]
where $\varphi^s: \mathcal{C}([0, s])\times \mathcal{C}([0, s]) \to\R$ is a bounded continuous function.
Then by setting $Y_{s, t}^j := \int_s^t\sigma(Z^j_r - M^j_r)dW_r^j$, we have that
\begin{align*}
Q_{s, t}^{N,i} 
&=\frac{1}{(S^N_i)^2}\sum_{j=1}^NJ^2_{ij} \E\left[\left(Y_{s, t}^j\right)^2\left(\varphi^s\left(Z^j, Y_{0, \cdot}^j \right)\right)^2 \right]\\
&+  \frac{1}{(S^N_i)^2}\sum_{j=1}^N\sum_{k=1, k\neq j}^NJ_{ij}J_{ik}\E\left[ \varphi^s\left(Z^j, Y_{0, \cdot}^j \right) \varphi^s\left(Z^k, Y_{0, \cdot}^k \right)\E[Y_{s, t}^jY_{s, t}^k| \mathcal{F}_s] \right].
\end{align*}
By the independence of $(W^j)_{j\in\{1, \dots, N\}}$ (and the progressive measurability of $Z^j - M^j$ for all $j\in\{1, \dots, N\}$), the cross-terms are $0$, so that
\begin{align*}
Q_{s, t}^{N,i} 
&\leq \|\varphi^s\|^2_\infty\Lambda_\sigma^2(t-s)\frac{1}{(S^N_i)^2}\sum_{j=1}^NJ^2_{ij} \quad \to 0 
\end{align*}
as $N\to\infty$ by assumption.  Thus by the continuous mapping theorem \cite[Theorem 2.7]{Billingsley} again, we see that $\langle \theta, (w_t - w_s)\varphi^s(z, w)\rangle = 0$
almost surely, for almost all $\theta$ under $\Xi_i^\infty$.  This implies that $w$ is indeed a continuous martingale under $\theta$ for almost all $\theta$ under $\Xi^\infty_i$.

The second claim is that
\begin{equation}
\label{quad var}
t\Lambda_\sigma^{-2} \leq [w]_t \leq t\Lambda_\sigma^{2}, \quad t\in[0, T],
\end{equation}
almost surely under $\theta$, for almost all $\theta$ under $\Xi^\infty_i$ (where $[\cdot]$ indicates the quadratic variation).  To see this, let $B:= \left\{w\in\mathcal{C}([0, T]): \exists t\in[0, T]\ \mathrm{s.t.}\ [w]_t > \Lambda^2_\sigma t \right\}$ and $A = \{\theta: \theta(B) >0\}$.  The set $A$ is open in $\mathcal{P}\left(\mathcal{C}([0, T])\times\mathcal{C}([0, T])\right)$.  Indeed, if $(\theta^k)_{k\geq1} \subset A^c$ is a sequence converging in the weak sense to $\theta$, then since $B$ is clearly an open set in $\mathcal{C}([0, T])\times\mathcal{C}([0, T])$, we have by the portmanteau theorem that $0 = \liminf_k \theta^k(B) \geq \theta(B)$, so that $\theta(B) = 0$ $\Rightarrow$ $\theta \in A^c$ i.e. $A^c$ is closed. Then, since $A$ is open, by the portmanteau theorem once again, we see that $\liminf_N \Xi_i^N(A) \geq \Xi^\infty_i(A)$.  However, we know that for almost all $\theta$ under $\Xi_i^N$, 
\begin{align*}
\theta(B) &= \theta \left\{w\in\mathcal{C}([0, T]): \exists t\in[0, T]\ \mathrm{s.t.}\ [w]_t > \Lambda^2_\sigma t \right\}\\
&=\frac{1}{S_i^N}\sum_{j=1}^N\mathbb{P}\left(\left\{\exists t\in[0, T]\ \mathrm{s.t.}\ \int_0^t\sigma^2(Z^j_s - M^j_s)ds > \Lambda^2_\sigma t\right\}\right) = 0.
\end{align*}
Hence $0 = \Xi^\infty_i(A) =  \Xi^\infty_i\{\theta: \theta(B) >0\}$, which proves the right-hand side of  \eqref{quad var}.  The left-hand inequality follows similarly.


We can now use these two claims to conclude.  The point is that, since $w$ is a continuous martingale, we know that  $w$ is in fact a time-changed Brownian motion under $\theta$ for almost all $\theta$ under $\Xi^\infty_i$.  More precisely we have that under $\theta$, for almost all $\theta$ under $\Xi^\infty_i$,
\[
w_t = \tilde{w}_{[w]_t}, \qquad t\geq0,
\]
in law, where $\tilde{w}$ is a standard Brownian motion under $\theta$.  Returning now to \eqref{step2}, we then see that (since $\tau_k(z)$ is a stopping time for the filtration generated by $(z,w)$)
\begin{align*}
(w_t - w_{\tau_k(z)} - c_T(t-\tau_k(z))\left(K+1\right)) &\stackrel{d}{=} \tilde{w}_{[w]_t -[w]_{\tau_k(z)}} - c_T(t-\tau_k(z))\left(K+1\right),
\end{align*}
under $\theta$, for almost all $\theta$ under $\Xi^\infty_i$.  Using the standard properties of Brownian motion, it is then straightforward to see that, thanks to \eqref{quad var}, the first term in the right-hand side of \eqref{step2} is indeed $0$.

\vspace{0.3cm}
\noindent\textit{Step 3:}  In view of Step 2, by taking $K\to\infty$ in \eqref{step2} we see that
\[
 \theta\left\{z: \tau_k(z)<T, \sup_{t\in[\tau_k(z), (\tau_k(z) + \varepsilon)\wedge T)}(z_t - k) = 0 \right\} = 0,
\]
for almost all $\theta$ under $\Xi^\infty_i$, for any $i\in\mathbb{N}$.  To complete the proof of the lemma, note that $\Pi_i^N = \pi_{*}(\Xi_i^N)$ where $\pi(\theta)$ is defined to be the marginal of $\theta \in \mathcal{P}({\mathcal C}([0,T])\times{\mathcal C}([0,T]))$ on the first coordinate, and $\pi_*(\Xi_i^N)$ indicates the push-forward of $\Xi_i^N$ by $\pi$.  Thus by continuity, we have  $\Pi_i^\infty = \pi_{*}(\Xi_i^\infty)$.
 Then, for any Borel subset $A \subset {\mathcal C}([0,T])$,  $\int \theta \{(z_{t})_{t \in [0,T]} \in A\} \Xi^{\infty}_i(d\theta) = \int \mu \{(z_{t})_{t \in [0,T+1]} \in A\} \Pi^{\infty}_i(d\mu)$. Choosing $A = \{\tau^k  < T,  \sup_{t\in[\tau_k, (\tau_k + \varepsilon)\wedge T)} (z_{t} - k) =0\}$ completes the proof.  
\qed

\section{Conclusion}
\label{conclusion}
In this paper we have developed an original self-contained analysis of the convergence of a stochastic particle system that interacts through threshold hitting times, which is motivated by a new model of a neural network that attempts to take into account the dendritic structure of each neuron.  However, we argue that in fact the model is quite natural, and could be applied in other fields.  Indeed, the basic idea is that particles evolve independently in a very general way (they follow a stochastic differential equation with Lipschitz drift and bounded diffusion terms) until one of them reaches a threshold, at which point all the others feel the effect of this event.  In our model the effect of the hitting event on the other particles is smoothed by the kernel $G$, and it could be argued that this is natural in such physical systems, since typically the information about the occurrence of a hitting event may take some time to be spread throughout the system.  One specific example of a field where such models may be particularly useful is the study of the default rate of large portfolios (see also \cite{Giesecke1, Giesecke2}).  In a large portfolio of assets, a default event should quickly (though maybe not instantaneously) have an effect on all other assets in the portfolio.  

Finally we mention some of the open questions surrounding the model and the analysis we perform.  From a biological standpoint, the principal generalization we would like to make is to treat the case where the coupling chosen in Section \ref{sec: derivation} is of the form \eqref{form f wanted}, whereby the distribution of synapses on the dendritic tree may be inhomogeneous.  Such a question may require a quite different approach.
Another interesting direction would be to include a spatio-temporal noise term in equation \eqref{cable equation} modeling the transmission of the spike along the dendritic tree.  This would result in a stochastic partial differential equation, and a second source of noise.

\section{Appendix}

\subsection{Hitting time density bounds}
Fix $T>0$, and consider the stochastic differential equation
\begin{equation}
\label{SDE}
d\chi_t = (b(\chi_t) + \rev{\alpha}(t))dt + \sigma(\chi_t)dW_t, \quad t\in[0, T],
\end{equation}
where \rev{$\alpha$} is some function in \rev{$\mathcal{C}^1([0, T])$}.  We assume that  
\begin{enumerate}
\item $b:\R\to\R$ is continuously differentiable \rev{with bounded derivative, and set $\Lambda_b := \max\{\|b'\|_\infty, b(0)\}$ so that} $|b(z)| \leq \Lambda_b(1+|z|)$ and $|b'(z)| \leq \Lambda_b$ for all $z\in \R$;
\item $\sigma:\R\to\R$ is twice continuously differentiable and $\exists$ $\Lambda_\sigma >0$ such that $\Lambda_\sigma^{-1}\leq \sigma(z)\leq \Lambda_\sigma$ and $|\sigma'(z)|, |\sigma''(z)|\leq \Lambda_b$ for all $ z\in\R$.
\end{enumerate}
Under such conditions, it is well known that \eqref{SDE} has a unique strong solution.
We are interested in 
\[
\tau_1 = \inf\{t\geq0: \chi_{t\wedge T} \geq1\}, \ \  (\inf\emptyset = \infty) \quad \mathrm{and} \quad p^x(t): =  \frac{d}{dt}\mathbb{P}_x\left(\tau_1 \leq t\right).
\]
for $t\in [0, T]$ and $x<1$.  We will in fact write $p^x(t) = p^x_\rev{\alpha}(t)$ to emphasize the dependence on the function $\rev{\alpha}$, since our goal is to derive bounds for $p^x_\rev{\alpha}(t)$ in terms of $\rev{\alpha}$.

The following lemma provides a formula for the density $p^x_\rev{\alpha}(t)$, $t\in[0, T]$, $x<1$.  It is a generalization of the formula presented in \cite{Yor} for the Ornstein-Uhlenbeck process, and is obtained by the same calculation in our more general context.  In particular it is a slight simplification (as well as a generalization) of the main result in \cite{Pauwels}.

\begin{prop}
\label{Pauwels}
Under the preceding hypotheses, for $t\in[0,T]$, and $x<1$ we have that $p^x_\rev{\alpha}(t)$ is well-defined and is given by
\begin{equation}
\label{p}
p^x_\rev{\alpha}(t) = e^{h_\rev{\alpha}(t, S(1)) - h_\rev{\alpha}(0, S(x))}\varphi^{S(x)\to S(1)}_\rev{\alpha}(t)\frac{S(1) -S(x)}{\sqrt{2\pi t^3}}e^{-\frac{(S(1)-S(x))^2}{2t}},
\end{equation}
where $S(z) := \int_0^z[\sigma(y)]^{-1}dy$ for $z\in\R$ and
\begin{itemize}
\item $h_\rev{\alpha}(t, z) := \int_0^zB_\rev{\alpha}(t,y)dy$ for $t\in[0, T]$, $z\in \R$, where
\[
B_\rev{\alpha}(t, z) := \frac{b(S^{-1}(z))}{\sigma(S^{-1}(z))} - \frac{1}{2}\sigma'(S^{-1}(z)) + \frac{\rev{\alpha}(t)}{\sigma(S^{-1}(z))}, \quad t\in[0, T], z\in\R;
\]
\item \begin{align*}
\varphi^{S(x)\to S(1)}_\rev{\alpha}(t) &:=\E\exp\left(- \frac{1}{2}\int_0^tg_\rev{\alpha}(u, r^{S(x)\to S(1)}_u)du\right);
\end{align*}
\item $g_\rev{\alpha}(t, z) := B_\rev{\alpha}(t, z)^2 + 2\partial_th_\rev{\alpha}(t, z) +  \partial_zB_\rev{\alpha}(t, z)$ for $t\in[0, T]$ and $z\in \R$;
\item $(r^{S(x)\to {S(1)}}_u)_{u\in [0, t]}$ is a Bessel (3)-bridge from $(0, S(x))$ to $(t, S(1))$, i.e.
\[
r^{S(x)\to {S(1)}}_u := S(1) - \frac{t-u}{t}R^{S(1)-S(x)}\left(\frac{ut}{t-u}\right), \quad u\in [0, t),
\]
$r^{S(x)\to S(1)}_t := S(1)$, where $(R^y(\gamma))_{\gamma\geq0}$ denotes a Bessel (3)-process started at $y\geq0$.
\end{itemize}
\end{prop}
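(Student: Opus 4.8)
The plan is to reduce the computation to a first-passage problem for a standard Brownian motion, by removing the diffusion coefficient through a Lamperti-type change of variable and then removing the time-inhomogeneous drift through Girsanov's theorem, and finally to identify the law of the Brownian motion given its first hitting time as that of a Bessel(3)-bridge. This is exactly the route followed in \cite{Yor} for the Ornstein--Uhlenbeck process, the point being that the extra drift and diffusion terms enter only through a multiplicative functional that is carried along inertly. First I would set $S(z) := \int_0^z\sigma(y)^{-1}dy$, which is a $\mathcal{C}^3$ increasing bijection of $\R$ because $\Lambda_\sigma^{-1}\le\sigma\le\Lambda_\sigma$, and apply It\^o's formula to $Y_t := S(\chi_t)$. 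A short computation gives $dY_t = B_\alpha(t,Y_t)\,dt + dW_t$ with $B_\alpha$ exactly as in the statement, $Y_0 = S(x)$, and, since $S$ is increasing, $\tau_1 = \inf\{t : Y_{t\wedge T}\ge S(1)\}$. So it remains to find the density of the first time a unit-volatility diffusion with drift $B_\alpha$ reaches $\beta := S(1)$ starting from $a := S(x)<\beta$.

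Next I would apply Girsanov's theorem on $[0,T]$: there is a measure $\widetilde{\mathbb{P}}$ under which $Y$ is a standard Brownian motion started at $a$, with $\frac{d\mathbb{P}}{d\widetilde{\mathbb{P}}}\big|_{\mathcal{F}_t} = \mathcal{E}_t := \exp\big(\int_0^t B_\alpha(s,Y_s)\,dY_s - \tfrac12\int_0^t B_\alpha(s,Y_s)^2\,ds\big)$. Using that $\partial_z h_\alpha(t,z) = B_\alpha(t,z)$ and applying It\^o's formula to $(t,z)\mapsto h_\alpha(t,z)$ along the (under $\widetilde{\mathbb{P}}$, driftless) process $Y$, the exponent rearranges so that
\[
\mathcal{E}_t = \exp\!\Big(h_\alpha(t,Y_t) - h_\alpha(0,a) - \tfrac12\int_0^t g_\alpha(s,Y_s)\,ds\Big),
\]
with $g_\alpha = B_\alpha^2 + 2\partial_t h_\alpha + \partial_z B_\alpha$, the function in the statement. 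By optional stopping, for bounded measurable $\phi$,
\[
\mathbb{E}_x\big[\phi(\tau_1)\Ind_{\{\tau_1\le T\}}\big] = \widetilde{\mathbb{E}}_a\big[\phi(\widetilde\tau)\,\mathcal{E}_{\widetilde\tau}\,\Ind_{\{\widetilde\tau\le T\}}\big],
\]
where $\widetilde\tau := \inf\{t : Y_t\ge\beta\}$; and on $\{\widetilde\tau = t\}$ one has $Y_t = \beta$, so the boundary term $h_\alpha(\widetilde\tau,Y_{\widetilde\tau})$ equals the deterministic $h_\alpha(\widetilde\tau,\beta)$.

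Finally I would disintegrate with respect to $\widetilde\tau$. Under $\widetilde{\mathbb{P}}$ its law is the classical first-passage density $q(t) = \frac{\beta-a}{\sqrt{2\pi t^3}}e^{-(\beta-a)^2/2t}$, and by Williams' time-reversal/path decomposition, conditionally on $\{\widetilde\tau=t\}$ the Brownian path $(Y_s)_{s\in[0,t]}$ starting at $a$ and first reaching $\beta$ at time $t$ is a Bessel(3)-bridge from $(0,a)$ to $(t,\beta)$, i.e. the process $(r^{S(x)\to S(1)}_u)_{u\in[0,t]}$ of the statement. Substituting, the conditional expectation of $e^{-\frac12\int_0^t g_\alpha(s,Y_s)ds}$ becomes $\varphi^{S(x)\to S(1)}_\alpha(t)$, and collecting the factors gives
\[
\mathbb{E}_x\big[\phi(\tau_1)\Ind_{\{\tau_1\le T\}}\big] = \int_0^T\phi(t)\, e^{h_\alpha(t,\beta)-h_\alpha(0,a)}\,\varphi^{S(x)\to S(1)}_\alpha(t)\, q(t)\,dt,
\]
which is $\int_0^T\phi(t)\,p^x_\alpha(t)\,dt$ with $p^x_\alpha$ as in \eqref{p}; since the integrand is continuous in $t$, this identifies $p^x_\alpha(t)$ with $\frac{d}{dt}\mathbb{P}_x(\tau_1\le t)$.

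The hard part will be making the middle steps fully rigorous. Because $Y$ ranges over all of $\R$, the drift $B_\alpha(t,Y_t)$ need not be bounded, so $\mathcal{E}$ is a priori only a local martingale; the change of measure must be justified by localising at the exit times $\inf\{t : |Y_t|\ge n\}$, proving the identity for the stopped processes, and passing to the limit, using the linear growth of $b$, the bound $|b'|\le\Lambda_b$, and the boundedness of $\alpha$ on $[0,T]$ (since $\alpha\in\mathcal{C}^1([0,T])$) to control the relevant integrability and the vanishing of the localisation error. Likewise the disintegration and the Bessel(3)-bridge identification require the usual care with regular conditional distributions and with the fact that one conditions on the \emph{first} hitting time; but these are precisely the Brownian computations carried out in \cite{Yor} (and, in a heavier form, in \cite{Pauwels}), and they transfer verbatim here since the drift- and diffusion-dependent data appear only through the bounded-on-compacts functional $e^{-\frac12\int g_\alpha}$ and the deterministic prefactor $e^{h_\alpha(t,\beta)-h_\alpha(0,a)}$. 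Everything else is the bookkeeping indicated above.
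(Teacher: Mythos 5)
Your proposal follows exactly the same route as the paper's proof sketch: Lamperti transform to unit volatility, Girsanov change of measure, integration-by-parts/It\^o rewriting of the Radon--Nikodym density as $e^{h_\alpha(t,\omega(t))-h_\alpha(0,\omega(0))}e^{-\frac{1}{2}\int_0^t g_\alpha}$, Doob's optional stopping at the first hitting time, and the Bessel(3)-bridge representation of the conditioned Brownian path from \cite{Yor}. The localisation caveat you flag for the Girsanov step is a legitimate technical point the paper's sketch elides, but it does not alter the argument; the two proofs coincide.
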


\rev{
\begin{proof}[Proof (sketch)]
The first step is to use the Lamperti transform $\tilde{\chi}_t := S(\chi_t)$, so that $d\tilde{\chi}_t = B_\alpha(t, \tilde{\chi}_t)dt + dW_t$.  We can then use the Girsanov theorem to write the Radon-Nikodym density of $\tilde{\mathbb{P}}_x = \mathrm{Law}(\tilde{\chi} |\tilde{\chi}_0 =x)$ with respect to the Wiener measure from $x$, ${\mathbb{W}}_x$.  By integrating by parts the stochastic integral appearing in the density and using It\^o's formula, we can express the Radon-Nikodym density on $\mathcal{F}_t$ as $Z_t = \exp(h_\alpha(t, \omega(t)) - h_\alpha(0, \omega(0)))\exp(-\frac{1}{2}\int_0^tg_\alpha(u, \omega(u))du)$ where $(\omega(t))_{t\in[0, T]}$ is the canonical process on $\mathcal{C}([0, T])$.
An application of Doob's optimal stopping theorem then yields
\[
\mathbb{P}_x(\tau_1 \in dt) = \E^{\mathbb{W}}_x\Big(Z_t|\tau_1 = t\Big)\mathbb{W}_x(\tau_1\in dt).
\]
To conclude we insert the known density $\mathbb{W}_x(\tau_1\in dt)$ and use the fact that the law of $\omega$ under $\mathbb{W}_0$ conditional on the event $\tau_{1-x} =t$ is given by the law of a Bessel (3)-bridge from $0$ to $1-x$.  For details see \cite{Yor} which simplifies the formula in \cite{Pauwels}.
\end{proof}
}

We first use Proposition \ref{Pauwels} to derive the following bound.
\begin{prop}
\label{density bound absolute}
For any $\rev{\alpha}\in \rev{\mathcal{C}^1([0, T])}$ there exists a constant $C_T$ depending only on $T$, $\Lambda_b$, $\Lambda_\sigma$, and \rev{$\|\alpha\|_{\mathcal{C}^1([0, T])}$} such that
\[
|p_{\rev{\alpha}}^x(t)| \leq C_Te^{C_Tx^2}(1-x)t^{-\frac{3}{2}} e^{-\frac{(1-x)^2}{2\Lambda^2_\sigma t}},
\]
for all  $t\in [0,T]$ and $x<1$.
\end{prop}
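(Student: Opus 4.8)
The plan is to estimate, one by one, the three factors in the formula \eqref{p} of Proposition \ref{Pauwels} for $p^x_\alpha(t)$. Write $y:=S(1)-S(x)$, which is strictly positive since $S$ is increasing and $x<1$. The boundedness $\Lambda_\sigma^{-1}\le\sigma\le\Lambda_\sigma$ makes the scale function bi-Lipschitz, $\Lambda_\sigma^{-1}|z|\le|S(z)|\le\Lambda_\sigma|z|$ and $|S^{-1}(z)|\le\Lambda_\sigma|z|$, so in particular $\Lambda_\sigma^{-1}(1-x)\le y\le\Lambda_\sigma(1-x)$. The Gaussian factor is then controlled at once: since $y^2\ge\Lambda_\sigma^{-2}(1-x)^2$,
\[
\frac{y}{\sqrt{2\pi t^3}}\,e^{-\frac{y^2}{2t}}\ \le\ \frac{\Lambda_\sigma(1-x)}{\sqrt{2\pi t^3}}\,e^{-\frac{(1-x)^2}{2\Lambda_\sigma^2 t}},\qquad t\in(0,T],
\]
which already has the shape of the asserted bound. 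It therefore remains to show that the two factors $e^{h_\alpha(t,S(1))-h_\alpha(0,S(x))}$ and $\varphi^{S(x)\to S(1)}_\alpha(t)$ are each at most $C_T e^{C_T x^2}$.

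For the first of these, I would bound $B_\alpha$ pointwise. Using $|b(w)|\le\Lambda_b(1+|w|)$ together with $|S^{-1}(z)|\le\Lambda_\sigma|z|$, $\sigma^{-1}\le\Lambda_\sigma$, the bound on $\sigma'$, and $|\alpha(t)|\le\|\alpha\|_{\mathcal{C}^1([0,T])}$, one gets $|B_\alpha(t,z)|\le c(1+|z|)$ with $c$ depending only on $\Lambda_b,\Lambda_\sigma,\|\alpha\|_{\mathcal{C}^1([0,T])}$; integrating, $|h_\alpha(t,z)|\le c\,(|z|+\tfrac12 z^2)$. Since $|S(1)|\le\Lambda_\sigma$ and $|S(x)|\le\Lambda_\sigma|x|$, this yields $h_\alpha(t,S(1))-h_\alpha(0,S(x))\le C_T(1+x^2)$, hence $e^{h_\alpha(t,S(1))-h_\alpha(0,S(x))}\le C_T e^{C_T x^2}$.

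The main obstacle is the factor $\varphi^{S(x)\to S(1)}_\alpha(t)=\E\exp\!\big(-\tfrac12\int_0^t g_\alpha(u,r_u)\,du\big)$, since $g_\alpha$ is not bounded below by a constant. The key point is that it is bounded below by a function \emph{linear} in $|z|$: differentiating $B_\alpha$ in $z$ via $(S^{-1})'(z)=\sigma(S^{-1}(z))$ gives $|\partial_z B_\alpha(t,z)|\le c(1+|z|)$, and $|\partial_t h_\alpha(t,z)|=\big|\int_0^z\alpha'(t)\,\sigma(S^{-1}(u))^{-1}du\big|\le c|z|$, so with $B_\alpha^2\ge0$ we obtain $g_\alpha(t,z)\ge -c(1+|z|)$ and therefore $-\tfrac12\int_0^t g_\alpha(u,r_u)du\le\tfrac{ct}{2}+\tfrac c2\int_0^t|r_u|\,du$. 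I would then split the Bessel $(3)$-bridge into its deterministic part (the segment from $S(x)$ to $S(1)$, of sup-norm $\le\Lambda_\sigma\max(1,|x|)$) and its fluctuation: writing $r_u=S(1)-\tfrac{t-u}{t}R^{y}\!\big(\tfrac{ut}{t-u}\big)$ and realizing $R^{y}$ as $\gamma\mapsto\|v+\beta_\gamma\|$ for any fixed $v$ with $\|v\|=y$ and a standard three-dimensional Brownian motion $\beta$, the reverse triangle inequality gives $|R^{y}(\gamma)-y|\le\|\beta_\gamma\|$; the substitution $\gamma=ut/(t-u)$ followed by Brownian scaling then reduces the fluctuation part of $\int_0^t|r_u|du$ to $t^{3/2}Y$ in law, where $Y:=\int_0^\infty\|\beta_s\|(1+s)^{-3}ds$ does not depend on $x$ or $t$. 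The map $h\mapsto\int_0^\infty\|h_s\|(1+s)^{-3}ds$ is Lipschitz on Wiener space for the Cameron-Martin norm (since $\|h_s\|\le\sqrt s\,\|h'\|_{L^2}$ and $\int_0^\infty\sqrt s\,(1+s)^{-3}ds<\infty$), so Gaussian concentration gives $Y$ Gaussian tails and $\E e^{\lambda Y}<\infty$ for every $\lambda$. Putting this together,
\[
\varphi^{S(x)\to S(1)}_\alpha(t)\ \le\ e^{ct/2}\,e^{\frac c2\Lambda_\sigma t\max(1,|x|)}\,\E\,e^{\frac c2 t^{3/2}Y}\ \le\ e^{cT/2}\,e^{\frac c2\Lambda_\sigma T\max(1,|x|)}\,\E\,e^{\frac c2 T^{3/2}Y},
\]
and since $\max(1,|x|)\le 1+\tfrac12 x^2$ this is at most $C_T e^{C_T x^2}$. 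Multiplying the three estimates gives $|p^x_\alpha(t)|\le C_T e^{C_T x^2}(1-x)t^{-3/2}e^{-(1-x)^2/(2\Lambda_\sigma^2 t)}$, with $C_T$ depending only on $T,\Lambda_b,\Lambda_\sigma,\|\alpha\|_{\mathcal{C}^1([0,T])}$, as claimed.
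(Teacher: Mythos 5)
Your proposal is correct, and it follows the paper's overall plan: split formula \eqref{p} into its three factors, control the scale factor and the $h_\alpha$-exponential by the bi-Lipschitz bounds on $S$ and the linear-growth bound $|B_\alpha(t,z)|\le c(1+|z|)$ (yielding the $e^{C_Tx^2}$), discard the nonnegative $B_\alpha^2$ term in $g_\alpha$, and reduce the remaining part of $g_\alpha$ to $c(1+|r_u|)$ so that everything hinges on controlling $\E\exp(c\int_0^t|r_u|\,du)$ uniformly in $t\in(0,T]$ and $x<1$. Your treatment of the Gaussian prefactor and of $e^{h_\alpha(t,S(1))-h_\alpha(0,S(x))}$ matches the paper's; the interesting difference is in the Bessel-bridge step.

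For $\varphi$, the paper proceeds from $\E\exp\bigl(C_T t^3\int_0^\infty (t+\gamma)^{-3}R^{y}(\gamma)\,d\gamma\bigr)$ by pulling out $\sup_{\gamma\ge 0}R^{y}(\gamma)/\sqrt{t+\gamma}$, bounding $R^y$ by $y$ plus one-dimensional Brownian moduli, and then asserting $\E\exp\bigl(C_T\sup_{\gamma\ge0}B_\gamma/\sqrt{t+\gamma}\bigr)=\E\exp\bigl(C_T\sup_{\theta\in[0,1]}B_\theta\bigr)$. That step is problematic: the processes $\gamma\mapsto B_\gamma/\sqrt{t+\gamma}$ and $\gamma\mapsto B_{\gamma/(t+\gamma)}$ share one-dimensional marginals but have different covariances, and in fact the law of the iterated logarithm makes $\sup_{\gamma\ge0}B_\gamma/\sqrt{t+\gamma}=+\infty$ almost surely, so the intermediate bound is vacuous. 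Your argument sidesteps this entirely: you write $R^y(\gamma)=\|v+\beta_\gamma\|$ and use the reverse triangle inequality to peel off the deterministic bridge, leaving the genuinely random part as $t^3\int_0^\infty(t+\gamma)^{-3}\|\beta_\gamma\|\,d\gamma$, which by Brownian scaling equals $t^{3/2}Y$ in law with $Y=\int_0^\infty\|\beta_s\|(1+s)^{-3}\,ds$; then $Y$ is a.s. finite (its mean is finite) and is Lipschitz in the Cameron--Martin norm because $\|h_s\|\le\sqrt{s}\,\|h'\|_{L^2}$ and $\int_0^\infty\sqrt{s}(1+s)^{-3}\,ds<\infty$, so Borell--TIS gives Gaussian tails and $\E e^{\lambda Y}<\infty$ for every $\lambda$. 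This keeps the integral form, does a scaling that genuinely removes $t$ from the random part, and uses a concentration inequality where the paper uses a time-change identity that does not hold; your route is the more robust of the two and also serves as a valid repair of the paper's sketch.
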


\begin{proof}[Proof (outline)]
The hypotheses on $b$, $\sigma$ and $\rev{\alpha}$ can be used to see that there exists a constant $C_T$ depending only on $T, \Lambda_b, \Lambda_\sigma$ and \rev{$\|\alpha\|_{\infty, T}$} such that
\begin{equation}
\label{expo bound}
e^{h_{\rev{\alpha}}(t, S(1)) - h_{\rev{\alpha}}(0, S(x)) }\leq C_Te^{C_Tx^2}, \quad x<1,\ t\in[0, T].
\end{equation}
Thanks to \eqref{p}, and the fact that \rev{$\Lambda^{-1}_\sigma \leq S' \leq \Lambda_\sigma$}, it thus suffices to  bound $\varphi^{S(x)\to S(1)}_\rev{\alpha}(t)$ for $x<1$ and $t\in[0, T]$.  To this end there exists $C_T$ depending only $\Lambda_b$, $\Lambda_\sigma$, $T$, and \rev{$\|\alpha\|_{\mathcal{C}^1([0, T])}$} that is allowed to change from line to line below such that
\begin{align*}
&\varphi^{S(x)\to S(1)}_\rev{\alpha}(t) \leq \E\exp\left(- \frac{1}{2}\int_0^t\left(2\partial_uh_\rev{\alpha}(u, r^{S(x)\to S(1)}_u) +  \partial_zB_\rev{\alpha}(u, r^{S(x)\to S(1)}_u)\right)du\right)\\
&\quad\leq C_T\E\exp\left(C_T\left\{\int_0^t |r^{S(x)\to S(1)}_u|du\right\}\right)\\
& \quad\leq C_T\E\exp\left(C_T\int_0^t \frac{t-u}{t}R^{S(1)-S(x)}\left(\frac{ut}{t-u}\right)du\right)\\
& \quad= C_T\E\exp\left(C_T t^3\int_0^\infty \frac{1}{(t+\gamma)^3}R^{S(1)-S(x)}\left(\gamma\right)d\gamma\right)
\end{align*}
where $\gamma = ut/(t-u)$ so that $du/d\gamma = t^2/(t+\gamma)^2$ and $(t-u)/t = t/(t+\gamma)$. Thus
\begin{align*}
\varphi^{S(x)\to S(1) }_\rev{\alpha}(t) &\leq  C_T\E\exp\left(C_T\sup_{\gamma \geq 0} \frac{R^{S(1)-S(x)}(\gamma)}{\sqrt{t+\gamma}}\right).
\end{align*}

Now $R^{S(1)-S(x)}(\gamma) \leq \sqrt{2}(S(1)-S(x) + |B^{(1)}_\gamma| + |B^{(2)}_\gamma| +|B^{(3)}_\gamma|)$ for all $\gamma \geq0$, where $B^{(1)}, B^{(2)},$ and  $B^{(3)}$ are independent standard 1-dimensional Brownian motions from $0$, by the definition of the Bessel $(3)$-process $(R^{S(1)-S(x)}(\gamma))_{\gamma\geq0}$.  Thus
\begin{align}
\label{varphi bound}
\varphi^{S(x)\to S(1)}_\rev{\alpha}(t) &\leq  C_Te^{C_T(1 - x)}\prod_{i=1}^3\E\exp\left(C_T\sup_{\gamma \geq 0} \frac{B^{(i)}_\gamma}{\sqrt{t+\gamma}}   \right)\nonumber\\
&=  C_Te^{C_T(1 - x)}\E\exp\left(C_T\sup_{\gamma \geq 0} B^{(1)}_{\frac{\gamma}{t + \gamma}}  \right)\nonumber\\
&=  C_Te^{C_T(1 - x)}\E\exp\left(C_T\sup_{\theta \in [0, 1]} B^{(1)}_{\theta}   \right) \leq C_Te^{C_T(1 - x)},
\end{align}
for all $t\in [0,T]$, $x<1$.
\end{proof}

We also have the following bound on the difference $|p_{{\rev{\alpha}}}^x - p_{\tilde{\rev{\alpha}}}^x|$:
\begin{prop}
\label{density bound diff}
For any \rev{$\alpha, \tilde{\alpha}\in \mathcal{C}^1([0, T])$} there exists a constant $C_T$ depending only on $T$, $\Lambda_b$, $\Lambda_\sigma$ and \rev{$\max\{\|\alpha\|_{\mathcal{C}^1([0, T])}$, $\|\tilde{\alpha}\|_{\mathcal{C}^1([0, T])}\}$} such that
\[
\left|p_{\rev{\alpha}}^x(t) - p_{\tilde{\rev{\alpha}}}^x(t)\right| \leq C_Te^{C_Tx^2}\|\rev{\alpha} - \tilde{\rev{\alpha}}\|_{\mathcal{C}^1([0, t])}(1 -x)t^{-\frac{3}{2}}e^{-\frac{(1-x)^2}{\Lambda_\sigma t}},
\]
for all  $t\in [0, T]$ and $x<1$.
\end{prop}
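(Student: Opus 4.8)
The plan is to work directly from the explicit formula \eqref{p} of Proposition \ref{Pauwels}. The ``Gaussian'' factor $\frac{S(1)-S(x)}{\sqrt{2\pi t^3}}e^{-(S(1)-S(x))^2/(2t)}$ there does not depend on $\alpha$, and using $\Lambda_\sigma^{-1}\leq S'\leq\Lambda_\sigma$ it is dominated by $C(1-x)t^{-3/2}e^{-(1-x)^2/(\Lambda_\sigma t)}$; so, writing $A_\alpha(t,x):=e^{h_\alpha(t,S(1))-h_\alpha(0,S(x))}\varphi_\alpha^{S(x)\to S(1)}(t)$ for the $\alpha$-dependent part of $p^x_\alpha(t)$, it suffices to prove $|A_\alpha(t,x)-A_{\tilde\alpha}(t,x)|\leq C_Te^{C_Tx^2}\|\alpha-\tilde\alpha\|_{\mathcal{C}^1([0,t])}$. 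I would split this by the triangle inequality into the difference of the exponential prefactors (times $\varphi_\alpha^{S(x)\to S(1)}(t)$) and the difference $|\varphi_\alpha^{S(x)\to S(1)}(t)-\varphi_{\tilde\alpha}^{S(x)\to S(1)}(t)|$ (times the prefactor for $\tilde\alpha$). The first piece is easy: since $h_\alpha(t,z)-h_{\tilde\alpha}(t,z)=\int_0^z(\alpha(t)-\tilde\alpha(t))[\sigma(S^{-1}(y))]^{-1}dy$ one has $|h_\alpha(t,z)-h_{\tilde\alpha}(t,z)|\leq\Lambda_\sigma|z|\,\|\alpha-\tilde\alpha\|_{\infty,t}$, so combining $|e^a-e^b|\leq(e^a\vee e^b)|a-b|$ with the bound \eqref{expo bound} (valid for both $\alpha$ and $\tilde\alpha$), the bound $\varphi_\alpha^{S(x)\to S(1)}(t)\leq C_Te^{C_T(1-x)}$ from \eqref{varphi bound}, and $1-x\leq 2+x^2$ to absorb linear terms into $e^{C_Tx^2}$, the first piece is at most $C_Te^{C_Tx^2}\|\alpha-\tilde\alpha\|_{\infty,t}$.

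The substantive estimate is for $|\varphi_\alpha^{S(x)\to S(1)}(t)-\varphi_{\tilde\alpha}^{S(x)\to S(1)}(t)|$. Setting $I_\alpha:=\int_0^t g_\alpha(u,r_u^{S(x)\to S(1)})du$ and using $|e^{-\frac12 I_\alpha}-e^{-\frac12 I_{\tilde\alpha}}|\leq\frac12(e^{-\frac12 I_\alpha}+e^{-\frac12 I_{\tilde\alpha}})|I_\alpha-I_{\tilde\alpha}|$, I would estimate $g_\alpha-g_{\tilde\alpha}$ termwise from the definition $g_\alpha=B_\alpha^2+2\partial_t h_\alpha+\partial_z B_\alpha$. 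There are three contributions: $B_\alpha^2-B_{\tilde\alpha}^2=(B_\alpha-B_{\tilde\alpha})(B_\alpha+B_{\tilde\alpha})$ with $B_\alpha-B_{\tilde\alpha}=(\alpha-\tilde\alpha)[\sigma(S^{-1}(\cdot))]^{-1}$ and $|B_\alpha(u,z)|\leq C_T(1+|z|)$ (by the linear growth of $b$ from Assumptions \ref{assumptions}), giving a bound $C_T(1+|z|)\|\alpha-\tilde\alpha\|_{\infty,t}$; the term $2(\partial_u h_\alpha-\partial_u h_{\tilde\alpha})(u,z)=2\int_0^z(\alpha'(u)-\tilde\alpha'(u))[\sigma(S^{-1}(y))]^{-1}dy$, which is the only place the \emph{derivative} of $\alpha-\tilde\alpha$ appears and is the reason the $\mathcal{C}^1$ norm is forced; and $\partial_z B_\alpha-\partial_z B_{\tilde\alpha}=-(\alpha-\tilde\alpha)\sigma'(S^{-1}(\cdot))/\sigma(S^{-1}(\cdot))$, bounded by $\Lambda_\sigma^2\|\alpha-\tilde\alpha\|_{\infty,t}$. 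Collecting these, $|g_\alpha(u,z)-g_{\tilde\alpha}(u,z)|\leq C_T(1+|z|)\|\alpha-\tilde\alpha\|_{\mathcal{C}^1([0,t])}$ for $u\leq t$, hence $|I_\alpha-I_{\tilde\alpha}|\leq C_T\|\alpha-\tilde\alpha\|_{\mathcal{C}^1([0,t])}\int_0^t(1+|r_u^{S(x)\to S(1)}|)du$.

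It then remains to bound $\E[(e^{-\frac12 I_\alpha}+e^{-\frac12 I_{\tilde\alpha}})\int_0^t(1+|r_u^{S(x)\to S(1)}|)du]$ by $C_Te^{C_Tx^2}$, which I would do by Cauchy--Schwarz, decoupling the exponential weight from the polynomially growing factor. For the first, $\E[(e^{-\frac12 I_\alpha}+e^{-\frac12 I_{\tilde\alpha}})^2]\leq 2\E e^{-I_\alpha}+2\E e^{-I_{\tilde\alpha}}$, and $\E e^{-I_\alpha}=\E\exp(-\int_0^t g_\alpha(u,r_u^{S(x)\to S(1)})du)\leq C_Te^{C_T(1-x)}$ by the same Bessel-bridge computation as in the proof of Proposition \ref{density bound absolute} (the extra factor $2$ in the exponent only enlarges $C_T$). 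For the second, the explicit form of the Bessel$(3)$-bridge gives $|r_u^{S(x)\to S(1)}|\leq|S(1)|+\tfrac{t-u}{t}R^{S(1)-S(x)}(\tfrac{ut}{t-u})$, whence, after the change of variables $\gamma=ut/(t-u)$, Minkowski's integral inequality together with $\|R^y(\gamma)\|_{L^2}=\sqrt{y^2+3\gamma}\leq y+\sqrt{3\gamma}$ yield $\|\int_0^t(1+|r_u^{S(x)\to S(1)}|)du\|_{L^2}\leq C_T(1+(1-x))$. Multiplying the two estimates and once more absorbing the polynomial factor into $e^{C_Tx^2}$ via $1-x\leq2+x^2$ completes the bound on $|\varphi_\alpha^{S(x)\to S(1)}(t)-\varphi_{\tilde\alpha}^{S(x)\to S(1)}(t)|$, and hence the proposition.

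The hard part will be precisely this last bound on the difference of the two Laplace-type functionals $\varphi_\alpha^{S(x)\to S(1)}$ and $\varphi_{\tilde\alpha}^{S(x)\to S(1)}$: one must separate the possibly large weight $e^{-\frac12 I}$ from the (linearly-in-$(1-x)$ growing) factor $\int_0^t(1+|r_u|)du$ and bound each by means of the Bessel-bridge moment computations underpinning Proposition \ref{density bound absolute}. Everything else is bookkeeping, the one point worth flagging being that the $\|\alpha'-\tilde\alpha'\|_\infty$ contribution enters solely through $\partial_u h_\alpha$, which is exactly why the final estimate is in terms of the full $\mathcal{C}^1$ norm of $\alpha-\tilde\alpha$ rather than just its sup norm.
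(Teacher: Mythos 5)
Your proposal is correct and follows essentially the same route as the paper's proof: the same splitting of $p^x_\alpha-p^x_{\tilde\alpha}$ into the difference of the exponential prefactors and the difference of the Bessel-bridge functionals $\varphi^{S(x)\to S(1)}_\alpha$, the same key estimate $|g_\alpha-g_{\tilde\alpha}|\leq C_T(1+|z|)\|\alpha-\tilde\alpha\|_{\mathcal{C}^1}$ (with the derivative entering only through $\partial_u h_\alpha$), and the same Cauchy--Schwarz decoupling followed by the Bessel-bridge moment computations from Proposition \ref{density bound absolute}. The only deviations are cosmetic: you control the polynomial factor via Minkowski's integral inequality and $\|R^y(\gamma)\|_{L^2}=\sqrt{y^2+3\gamma}$ where the paper bounds $\E\int_0^t(r^{S(x)\to S(1)}_u)^2du$ directly, and you use the slightly sharper bound $|e^{-a}-e^{-b}|\leq(e^{-a}+e^{-b})|a-b|$ in place of the paper's cruder $e^{\frac12\max\{\int|g_\alpha|,\int|g_{\tilde\alpha}|\}}$ weight.
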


\begin{proof}[Proof (outline)]
Throughout the proof $C_T$ will be a constant depending only on $T, \Lambda_b, \Lambda_\sigma$ and \rev{$\max\{\|\alpha\|_{\mathcal{C}^1([0, T])}$, $\|\tilde{\alpha}\|_{\mathcal{C}^1([0, T])}\}$} that may change from line to line below.  In view of Proposition \ref{Pauwels}, we have
\begin{align}
&\left|p_{\rev{\alpha}}^x(t) -p_{\tilde{\rev{\alpha}}}^x(t) \right| \leq C_T(I_1 + I_2)\frac{1 -x}{\sqrt{2\pi t^3}}e^{-\frac{(1-x)^2}{\Lambda_\sigma t}},
\label{App: diff main}
\end{align}
where
\[
I_1 := e^{h_{\rev{\alpha}}(t, S(1)) - h_{\rev{\alpha}}(0, S(x)) }\left|D_1^{x, \rev{\alpha}, \tilde{\rev{\alpha}}}(t)\right|, \ 
D_1^{x,\rev{\alpha}, \tilde{\rev{\alpha}}}(t) := \varphi_{\rev{\alpha}}^{S(x)\to S(1)}(t)  - \varphi_{\tilde{\rev{\alpha}}}^{S(x)\to S(1)}(t)
\]
and
\[
I_2 :=  \varphi_{\tilde{\rev{\alpha}}}^{S(x)\to S(1)}(t)\left| D_2^{x, \rev{\alpha}, \tilde{\rev{\alpha}}}(t)\right|,\ 
D_2^{x,\rev{\alpha}, \tilde{\rev{\alpha}}}(t) := e^{h_{\rev{\alpha}}(t, S(1)) - h_{\rev{\alpha}}(0, S(x)) } -e^{h_{\tilde{\rev{\alpha}}}(t, S(1)) - h_{\tilde{\rev{\alpha}}}(0, S(x))}. 
\]

\vspace{0.2cm}
\noindent\textit{Bound for $I_2$:}  Without loss of generality, suppose that $h_{\rev{\alpha}}(t, S(1)) - h_{\rev{\alpha}}(0, S(x))  \geq h_{\tilde{\rev{\alpha}}}(t, S(1)) - h_{\tilde{\rev{\alpha}}}(0, S(x))$.  Then since $1 - e^{-x} \leq x$ for $x\geq0$,
\begin{align*}
&\left| D_2^{x,\rev{\alpha}, \tilde{\rev{\alpha}}}(t) \right| &\leq  e^{h_{\rev{\alpha}}(t, S(1)) - h_{\rev{\alpha}}(0, S(x)) }\left(h_{\rev{\alpha}}(t, S(1)) - h_{\tilde{\rev{\alpha}}}(t, S(1)) - h_{\rev{\alpha}}(0, S(x))  + h_{\tilde{\rev{\alpha}}}(0, S(x)) \right).
\end{align*} 
By definition of $h_\rev{\alpha}$,
\begin{align*}
&h_{\rev{\alpha}}(t, S(1)) - h_{\tilde{\rev{\alpha}}}(t, S(1)) - h_{\rev{\alpha}}(0, S(x))  + h_{\tilde{\rev{\alpha}}}(0, S(x))\\
&= (\rev{\alpha}(t) - \tilde{\rev{\alpha}}(t))\int_{S(x)}^{S(1)}\frac{1}{\sigma(S^{-1}(y))}dy + \int_0^t(\rev{\alpha}'(s) - \tilde{\rev{\alpha}}'(s))ds\int_0^{S(x)}\frac{1}{\sigma(S^{-1}(y))}dy\\
&\leq C_T(1 + |x|)\|\rev{\alpha} - \tilde{\rev{\alpha}}\|_{\mathcal{C}^1([0, T])}.
\end{align*}
Thus, using \eqref{expo bound} and \eqref{varphi bound}
\begin{align}
\label{I2 bound}
I_2&\leq  C_Te^{C_Tx^2}\|\rev{\alpha} - \tilde{\rev{\alpha}}\|_{\mathcal{C}^1([0, T])}, \quad t\in[0, T],\ x<1.
\end{align}

\vspace{0.2cm}
\noindent\textit{Bound for $I_1$:} 
We have for $t\in [0, T]$, $x<1$,
\begin{align*}
&\left|D_1^{x,\rev{\alpha}, \tilde{\rev{\alpha}}}(t)\right| \leq \E\left|e^{- \frac{1}{2}\int_0^tg_{\rev{\alpha}}(u, r^{S(x)\to S(1)}_u)du}  - e^{- \frac{1}{2}\int_0^tg_{\tilde{\rev{\alpha}}}(u, r^{S(x)\to S(1)}_u)du}\right|\\
&\leq \frac{1}{2}\E\Big[e^{\frac{1}{2}\max\{\int_0^t|g_{\rev{\alpha}}(u, r^{S(x)\to S(1)}_u)|du,\int_0^t|g_{\rev{\alpha}}(u, r^{S(x)\to S(1)}_u)|du\}}\\
&\qquad\qquad\qquad\qquad \times \int_0^t|g_{\rev{\alpha}}(u, r^{S(x)\to S(1)}_u) -g_{\tilde{\rev{\alpha}}}(u, r^{S(x)\to S(1)}_u)|du\Big]\\
&\leq C_Te^{C_T(1-x)}\left(\E\Big[\int_0^t|g_{\rev{\alpha}}(u, r^{S(x)\to S(1)}_u) -g_{\tilde{\rev{\alpha}}}(u, r^{S(x)\to S(1)}_u)|^2du\Big]\right)^\frac{1}{2}
\end{align*}
by using Cauchy-Schwarz and \eqref{varphi bound} to bound the first factor. Now by definition,
\begin{align*}
&\left|g_{\rev{\alpha}}(u, z) -g_{\tilde{\rev{\alpha}}}(u, z)\right|\leq C_T(1+|z|)\|\rev{\alpha} - \tilde{\rev{\alpha}}\|_{\mathcal{C}^1([0, T])},
\end{align*}
for all $u\in [0, t]$, and $z\in\R$.  Therefore, by \eqref{expo bound},
\begin{align*}
I_1 &\leq C_Te^{C_Tx^2}\left(1 + \E\Big[\int_0^t(r^{S(x)\to S(1)}_u)^2du\Big]\right)^\frac{1}{2}\|\rev{\alpha} - \tilde{\rev{\alpha}}\|_{\mathcal{C}^1([0, T])}.
\end{align*}
To complete the proof, it thus suffices to bound
\begin{align*}
\E\left[\int_0^t(r^{S(x)\to S(1)}_u)^2du \right] &= \E\left[\int_0^t\left( S(1) - \frac{t-u}{t}R^{S(1)-S(x)}\left(\frac{ut}{t-u}\right)\right)^2du \right]\\
&\leq 2(S(1))^2t + 2\int_0^\infty\frac{t^4}{(t+\gamma)^4}\E\left[\left(R^{S(1)-S(x)}\right)^2\left(\gamma\right)\right]d\gamma,
\end{align*}
where $\gamma = ut/(t-u)$ so that $du/d\gamma = t^2/(t+\gamma)^2$ and $(t-u)/t = t/(t+\gamma)$ as before.  Now using the simple bound $\E[(R^{S(1)-S(x)})^2(\gamma)] \leq 4	((S(1)-S(x))^2 + \gamma)$ \rev{(recall that by definition 
$(R^y)^2(\gamma) := (B^{(1)}_\gamma + y)^2 + (B^{(2)}_\gamma)^2 + (B^{(3)}_\gamma)^2$ for any $y\geq0$ and $\gamma\geq0$, where $B^{(1)}, B^{(2)}$ and $B^{(3)}$ are independent 1-dimensional Brownian motions from $0$)}, we can conclude.
\end{proof}

\subsection{Improvements.}
\label{Improvements}
It should be noted that the bounds in Propositions \ref{density bound absolute} and \ref{density bound diff}, although good for small time, are a long way from being optimal.  In particular the factor of $\exp(C_Tx^2)$ in both bounds is rather unsatisfactory, since for large time it is not dominated by the term $\exp(-(1-x)^2/\Lambda_\sigma t)$.  It is for this reason that in the above work (see Assumptions \ref{assumptions}), we have assumed that the initial condition $U_0$ has compact support.  Although not critically important for the main thrust of this article, the prospect of having a better density estimate is an interesting question.

Indeed, suppose that we are again in the situation of Proposition \ref{density bound absolute}.  Without loss of generality we may suppose that $\sigma\equiv1$ (otherwise we may use the Lamperti transform $S$ as above).  We would in fact like to prove that
\begin{equation}
\label{key quantity}
p_{\rev{\alpha}}^x(t) = e^{h_\rev{\alpha}(t, 1) - h_\rev{\alpha}(0, x)}\varphi^{x\to 1}_\rev{\alpha}(t) \frac{1-x}{\sqrt{2\pi t^3}}  e^{-\frac{(1-x)^2}{2t}} \leq C_T(1-x)t^{-\frac{3}{2}}  e^{-\frac{(1-x)^2}{C_Tt}}
\end{equation}
for $x<1$, $t\in(0, T]$ and some constant $C_T$.
To this end, we look again at the left hand side of \eqref{key quantity}.  The hypotheses on $b$ and $\rev{\alpha}$ can be used to see that
\begin{equation*}
e^{h_{\rev{\alpha}}(t, S(1)) - h_{\rev{\alpha}}(0, S(x)) }\leq C_Te^{C_T|x| + \int_x^{1}b(y)dy}. 
\end{equation*}
Moreover we have
\begin{align}
\label{phi1}
\varphi^{x\to 1}_\rev{\alpha}(t) 
&\leq C_T\E\exp\left( - \frac{1}{2}\int_0^tb^2(r^{x\to 1}_u)du + C_T\int_0^t |r^{x\to 1}_u|du\right).
\end{align}
Using these two estimates in \eqref{key quantity} yields (after substituting in the definition of $r^{x\to 1}_u$ and using the simple bound\footnote{Recall that the Bessel $3$-process $R^{z}$ starting from $z\geq0$ is defined as
\[
R^{z}(\gamma) := \sqrt{(z + B^{(1)}_\gamma)^2 + (B^{(2)}_\gamma)^2 + (B^{(3)}_\gamma)^2}, \quad, \gamma\geq0,
\]
where $B^{(1)}, B^{(2)},$ and  $B^{(3)}$ are independent standard 1-dimensional Brownian motions.}
$R^{1-x}(\gamma) \leq \sqrt{2}( 1- x +  R^{0}(\gamma))$ for all $x<1$ and $\gamma\geq 0$)
\begin{align*}
\label{total1}
p_{\rev{\alpha}}^x(t)&\leq  C_Te^{C_T|x|} \E \left(e^{ \int_x^{1}b(y)dy - \frac{1}{2}\int_0^tb^2(r^{x\to 1}_u)du -\frac{(1-x)^2}{2t}  + C_T\int_0^t \frac{t-u}{t}R^{0}\left(\frac{ut}{t-u}\right)du}\right)(1-x)t^{-\frac{3}{2}}\nonumber\\
&\leq C_Te^{C_T|x|} \left[\E \left(e^{ 2\int_x^{1}b(y)dy - \int_0^tb^2(r^{x\to 1}_u)du -\frac{(1-x)^2}{t}}\right)\right]^\frac{1}{2}(1-x)t^{-\frac{3}{2}}.
\end{align*}
For the second inequality we have used the Cauchy-Schwarz inequality and the bound on the expectation of the exponential of $\int_0^t \frac{t-u}{t}R^{0}(\frac{ut}{t-u})du$ proved at the end of Proposition \ref{density bound absolute}.  It is thus clear that if we can prove a bound of the form
\begin{equation}
\label{hope}
J(t, x):=\E \left(e^{ 2\int_x^{1}b(y)dy - \int_0^tb^2(r^{x\to 1}_u)du -\frac{(1-x)^2}{t}}\right) \leq C_Te^{C_T|x|}e^{-\frac{(1-x)^2}{C_Tt}}, \quad \forall t\in(0, T],
\end{equation}
for some $C_T$, at least when $x$ is sufficiently negative, then \eqref{key quantity} will follow.  

The hope is that the term $2\int_x^{1}b(y)dy$ in $J(t, x)$, which is potentially of order $x^2$, may be compensated by a combination of $-\int_0^tb^2(r^{x\to 1}_u)du$ and $-(1-x)^2/t$.  Whether or not this is true for all Lipschitz $b$ is not clear to us.  However, there are some examples when \eqref{hope} (and hence \eqref{key quantity}) can be easily shown to hold:
\begin{itemize}
\item [(i)] Suppose that $b$ is bounded from above i.e. $b(x) \leq K$ for all $x\leq1$ (note $b$ is not necessarily bounded from below). Then clearly we have that
\[
J(t, x) 
\leq C_Te^{C_T|x|}e^{-\frac{(1-x)^2}{t}},
\]
for all $t\in(0, T]$, where $C_T$ depends on $K$.
\item  [(ii)] Suppose that $b(x) = -\lambda x$ with $\lambda >0$ (the case $\lambda \leq 0$ is covered by (i)).  Then
\begin{align*}
J(t, x) &=\E \exp\left(-\lambda (1 - x^2) - \lambda^2\int_0^t(r^{x\to 1}_u)^2du -\frac{(1-x)^2}{t}\right)\\
&\leq \E \exp\left(\lambda (1-x)^2 - \lambda^2\int_0^t\left[1 - \frac{t-u}{t}R^{1-x}\left(\frac{ut}{t-u}\right)\right]^2du -\frac{(1-x)^2}{t}\right)\\
&\leq C_Te^{C_T|x|}\exp\left( -\frac{(1-x)^2}{t}\left[\frac{1}{3}\lambda^2t^2 - \lambda t +1\right]\right).
\end{align*}
Finally note that $\frac{1}{3}w^2 - w + 1 \geq \frac{1}{4}$ for all $w\in\R$. Therefore 
\begin{align*}
J(t, x) &\leq C_Te^{C_T|x|}\exp\left( -\frac{(1-x)^2}{4t}\right).
\end{align*}
\item [(iii)] By combining the above two cases, we can see that \eqref{hope} also holds when $b(x) = -\lambda x + h(x)$ where $\lambda \in \R$ and $h$ is bounded from above on $(-\infty, 1]$.
\end{itemize}

\subsection{Nomenclature}
\label{nomenclature}

\begin{center}
\begin{tabular}{ | c | c |}
\hline
 \hspace {0.5cm}\textbf{Notation:}\hspace{0.5cm} &  \hspace{0.5cm}\textbf{Defined in:}\hspace{0.5cm}\\ \hline
 \hspace {0.5cm}$U_t^i,\ M_t^i,\ \tau_k^i,\ S_i^N$\hspace{0.5cm} & \hspace{0.5cm}\eqref{particle system}\hspace{0.5cm}\\
 \hspace {0.5cm}$U_t,\ M_t,\ \tau_k$\hspace{0.5cm} & \hspace{0.5cm}\eqref{limit equation}\hspace{0.5cm}\\
 \hspace {0.5cm}$Z_t$\hspace{0.5cm} & \hspace{0.5cm}\eqref{Z limit equation}\hspace{0.5cm}\\
 \hspace {0.5cm}$\Phi(h),\ M_t^h,\ Z_t^h,\ f_h(t),\ \tau_k^h$\hspace{0.5cm} & \hspace{0.5cm}\eqref{Phi}--\eqref{fh}\hspace{0.5cm}\\
 \hspace {0.5cm}$f^{\sharp s}_h(r)$\hspace{0.5cm} & \hspace{0.5cm}\eqref{fshift}\hspace{0.5cm}\\
 \hspace {0.5cm}$\bar{\mu}^N_i,\ \Pi_i^N$\hspace{0.5cm} & \hspace{0.5cm}\eqref{mu bar}-\eqref{Pi def}\hspace{0.5cm}\\
 \hspace {0.5cm}$m_t,\ m_t(z),\ n_t,\ n_t(z)$\hspace{0.5cm} & \hspace{0.5cm}\eqref{mn}\hspace{0.5cm}\\
 \hspace {0.5cm}$\mathcal{L}_{t, \mu}$\hspace{0.5cm} & \hspace{0.5cm}\eqref{generator}\hspace{0.5cm}\\
 \hline
\end{tabular}
\end{center}


\end{document}